\begin{document}

%
%
%
%
%
%
%
%

\begin{frontmatter}
\title{Some Asymptotic Results of Gaussian Random Fields with Varying Mean Functions and the Associated Processes}

\runtitle{Some Asymptotic Results of Gaussian Random Fields}
\begin{aug}

\author{\fnms{Jingchen} \snm{Liu}\thanksref{t1}}
\and
\author{\fnms{Gongjun} \snm{Xu}}

\thankstext{t1}{Research supported in part by Institute of Education Sciences, through Grant R305D100017, NSF CMMI-1069064, and NSF SES-1123698.}


\runauthor{Liu and Xu}
\affiliation{Columbia University}
\end{aug}

%
%

\begin{abstract}
In this paper, we derive tail approximations of integrals of
exponential functions of Gaussian random fields with varying mean
functions and approximations of the associated point processes. This study is
motivated naturally by multiple applications such as hypothesis testing for spatial models and financial applications.
\end{abstract}

\begin{keyword}[class=AMS]
\kwd[Primary ]{60G15, 65C05}
\end{keyword}

\begin{keyword}
\kwd{Gaussian process, integral, change of measure}
\end{keyword}
\end{frontmatter}

\section{Introduction}\label{SecIntro}

Gaussian random fields and multivariate Gaussian random vectors
constitute a cornerstone of statistics models in many disciplines,
such as physical oceanography and hydrology (\cite{AMR,Rubin}),
atmosphere study (\cite{DARO91}), geostatistics
(\cite{CHGE92,CHGE00}), astronomy (\cite{SGD93,GHVKP08}), and brain
imaging (\cite{SSSW, WT06,TaylorWorsley08, Taylor-Worsley-JASA}).
The difficulty very often lies in assessing the significance of the
test statistics due to the dependence structure induced by the
random field. In recent studies, closed form approximations of the
tail probabilities of supremum of random fields (the $p$-values)
have been studied intensively such as in
\cite{TaylorWorsley08,Taylor-Worsley-JASA, NSY08}. In this paper, we
develop asymptotic results of the integrals of exponential functions
of smooth Gaussian random fields with varying mean functions and the
associated point processes.

For concreteness, let $\{f(t): t\in T\}$ be a centered Gaussian
random field with unit variance and living on a $d$-dimensional domain $T\subset R^d$. For
every finite subset of $\{t_1,...,t_n\}\subset T$,
$(f(t_1),...,f(t_n))$ is a mean zero multivariate Gaussian random
vector. In addition, let $\mu(t)$ be a (deterministic) function. The
main quantity of interest is the probability
\begin{equation}\label{Tail}
  P\left(\int_T e^{\sigma f(t)+\mu(t)}dt > b\right),
\end{equation}
where $\sigma \in (0,\infty)$ is the scale factor.
In particular, we consider the asymptotic regime where $b$ tends to
infinity and develop closed form approximations of the above tail
probabilities. We further consider a doubly-stochastic Poisson process
$\{N(A): A\subset T\}$, with intensity $\{\lambda(t): t\in T\}$.
More specifically,  let $\log \lambda(t) = \sigma f(t)+\mu(t)$ be a continuous
Gaussian process. Conditional on $\{\lambda(t): t\in T\}$, $\{N(A): A\subset
T\}$ is an inhomogeneous Poisson process with intensity
$\lambda(t)$. Note that, conditional on the process $f(t)$,
$N(A)$ is a Poisson random variable with expectation $\int_A
e^{\sigma f(t)+\mu(t)}dt$. Then, we are interested in approximating the
tail probability
\begin{equation}\label{TailCount}
P(N(T) > b).
\end{equation}

The approximations of tail
probabilities in  \eqref{Tail} and \eqref{TailCount} are motivated by multiple
applications such as hypothesis testing for spatial models and financial
applications; see detailed discussions in Section \ref{SecApp}. In fact, \eqref{Tail} and \eqref{TailCount} are asymptotically the same. Therefore, the main result of this paper lies in developing approximations for \eqref{Tail}.

In the statistics literature, closed form approximations of the tail
probabilities of Gaussian random fields have been widely employed
for the computation of $p$-values such as significance levels of the
scanning statistics (\cite{SY00, SSSW, NSY08, ATW09}). The works of
\cite{TaylorWorsley08,Taylor-Worsley-JASA,TaylorWorsley082} use
expected Euler characteristics of the excursion set as an
approximation and applied it to neuroimaging. \cite{RabinSiegmund97}
used saddlepoint approximation for the tail  of a smoothed Poisson
point process. Using a change of measure idea, \cite{NSY08} derived
the approximations for non-Gaussian fields in the context of a
likelihood-based hypothesis test.
In the probability literature, the extreme behavior of Gaussian random fields is also intensively studied. The results range from general bounds
to sharp asymptotic approximations. An incomplete list of works
includes {\cite{Hu90, LS70,MS70,ST74,Bor75,CIS,Berman85,LT91,TA96,Bor03}}.
A few lines of investigations on the supremum norm are given as
follows. Assuming locally stationary structure, the double-sum
method (\cite{Pit96}) provides the exact asymptotic approximation of
$\sup_T f(t)$ over a compact set $T$, which is allowed to grow as
the threshold tends to infinity. For almost surely at least twice
differentiable fields, \cite{Adl81,TTA05, AdlTay07} derive the
analytic form of the  expected Euler-Poicar\'e Characteristics of
the excursion set ($\chi(A_b)$) which serves as a good approximation
of the tail probability of the supremum. The tube method
(\cite{Sun93}) takes advantage of the Karhune-Lo\`eve expansion and
Weyl's formula. A recent related work along this line is given by
\cite{NSY08}. The Rice method (\cite{AW05,AW08,AW09}) provides an
implicit description of $\sup_T f(t)$. The discussions also go
beyond the Gaussian fields. For instance, \cite{HPZ11} discusses the situations of Gaussian process with random variances. See also  \cite{AST09} for other discussions.

The analysis of integrals of non-linear functions of Gaussian
random fields is less developed compared with that of the supremum.
In the case that $f(t)$ is the Brownian motion, the distribution of
$\int_0^\infty e^{f(t)}dt$ is discussed by {\cite{Yor92,Duf01}}. For
 smooth and homogeneous Gaussian random fields,
the tail approximation of $\int_T e^{f(t)}dt$ is given by
{\cite{Liu10}} using a technique similar to the double-sum method.
The result and technique in {\cite{Liu10}} are restricted to the
homogenous fields (with a constant mean). In statistical analysis, however, allowing spatially varying mean functions
is usually very important especially in presence of
spatially varying covariates. Meanwhile, developing sharp
asymptotic approximations for random fields with spatially varying
means is a much more complicated and more difficult problem. The current work substantially generalizes the result of \cite{Liu10} and is applicable to more practical settings such as the presence of spatially varying covariates.

The contribution of this paper is to develop asymptotic
approximations of the probabilities as in \eqref{Tail} and
\eqref{TailCount} by introducing a change-of-measure technique.
This change of measure was first proposed by \cite{NSY08} to derive tail asymptotics of supremum of non-Gaussian random fields.
This technique substantially simplifies the analysis (though the
derivations are still complicated) and may potentially lead to
efficient importance sampling algorithms to numerically compute \eqref{Tail} and
\eqref{TailCount}; see {\cite{SIE76, ABL08, ABL09,BLY}}
for a connection between the change of measure and efficient computation
of tail probabilities. In addition, without too many modifications,
one can foresee that the proposed change of measure can be adapted
to certain non-Gaussian random fields such as those in
\cite{NSY08}, which uses a change-of-measure technique to develop
the approximations of suprema of non-Gaussian random fields with
functional expansions.



The organization of the rest of this paper is as follows. In Section
\ref{SecApp}, we present several applications of the current
study. The main results are given in Section \ref{SecMain} with
proofs provided in Section \ref{SecProof}. Some useful
lemmas are stated in Section \ref{SecLem}. A simulation study and technical proofs of several lemmas are provided as supplemental article \cite{LXsuppA}.

\section{Applications}\label{SecApp}

The integrals of exponential functions of random fields play an
important role in many applications. We present a few of
them in this section.

\subsection{Hypothesis testing}

\paragraph{Hypothesis testing for point processes}
Consider the doubly-stochastic Poisson process $N(\cdot)$ with intensity $\lambda(t)$ as defined in the introduction. The mean function of the log-intensity $\mu(t)$ is typically modeled as a linear combination of the observed spatially varying covariates, that is,
\begin{equation}\label{mu}
  \mu(t) = \xx^\top (t)\beta,
\end{equation} where $\xx(t)= (x_1(t),...,x_p(t))^\top$.
The Gaussian process $f(t)$ is then employed to build in a spatial
dependence structure by letting $\log\lambda(t)=f(t)+\mu(t)$. This
modeling approach has been widely used in the literature. For instance,
\cite{ChLe95} considers the time series setting in which $T$ is a
one dimensional interval, $\mu(t)$ in \eqref{mu} is modeled as the observed
covariate process  and $f(t)$ is  an autoregressive
process. See \cite{DDW00,Camp94,Zeger88,COX55,COIS80} for more
examples of such kind. For applications, this approach has been used
in many disciplines, such as astronomy, epidemiology, geography,
ecology and material science. Particularly, in the epidemiological
study, this model is used to describe the spatial
distribution of positive (e.g. cancer) cases over a region $T$ and
the latent intensity process is used to account for the unobserved
factors that may affect the hazard.

Under the above assumptions, we consider the related hypothesis testing
problems admitting a (simple) null hypothesis that the point process has log-intensity
$f(t) + \xx^\top (t) \beta$ with $\beta$ and the covariance function known. The alternative
hypothesis could be any probability model under which the
distribution of $N(T)$ stochastically dominates the one under the
null hypothesis. The one-sided $p$-value is then given by
\begin{equation}\label{tailc}
  P(N(T)>b),
\end{equation}
where $b$ is the observed count. This is equivalent to testing that a region $T$ is of a higher hazard level than the typical (null) level.

For concreteness, we consider one situation that is frequently encountered in epidemiology.  Let the process $N(\cdot)$ denote the spatial locations of positive asthma cases in a certain region $T$ such as the New York City metropolitan. We assume that $N(\cdot)$ admits the doubly-stochastic structure described previously. To keep the example simple, we only include one covaraite in the model, that is, $\log \lambda(t) = \beta_{0} + \beta_{1}x(t)$, where $x(t)$ is the pollution level at location $t$ and has been standardized so that $\int_{T} x(t) dt =0$. Suppose that $\beta_{1}$ is known or an accurate estimate of $\beta_{1}$ is available. One is interested in testing the simple hypothesis $H_{0}: \beta_{0} = \beta_{0}^{*}$ against $H_{1}: \beta_{0}> \beta_{0}^{*}$, where ${\beta^{*}_{0}}$ is the national-wise log-intensity. A $p$-value is given by $P(N(T)>b)$. In this case, it is necessary to consider a spatially varying mean of the log-intensity to account for the inhomogeneity given that regression coefficient $\beta_1$ for pollution level is nonzero under the null hypothesis.

For other instances, the spatially varying covariates are sometimes chosen to functions to reflect certain periodicity.
One such case study is discussed in \cite{Zeger88} and further in \cite{DDW00} under the time series setting. In that example,  positive cases of poliomyelitis in the U.S.A. for the years 1970 - 1983 were observed. The time-varying covariates $\xx(t)$ are set to be a linear trend and harmonics at periods of 6 and 12 months and more precisely
$$\mu(t) = \xx(t)^\top \beta, \qquad \xx(t) = (1,t,\cos(2\pi t/12), \sin(2\pi t/ 12),\cos(2\pi t/6), \sin(2\pi t/ 6) ),$$
where $t$ is in the unit of one month. Similar hypothesis testing problems to the asthma case may be considered. The coefficients for most terms are significantly non-zero.
Thus, spatially varying covariates are ubiquitous in practice, which results in an non-constant mean under the null hypothesis.

In order to apply the results in this paper to a composite null hypothesis, such as the case in which the covariance
function of $f(t)$ is unknown, we need the corresponding estimates for some
characteristics of the covariance function of $f$ (see Theorem
\ref{ThmG}). The uncertainty
 of these estimates will definitely
introduce additional difficulty of the $p$-value calculation. On the other hand, with
reasonably large sample size, the necessary parameters can be
estimated accurately. In addition, the approximations stated in
later theorems only consist of the derivatives  of the covariance function (equivalently spectral moments) and $\mu(t)$ at the
global maximum. Then, one can design estimators simply for these
distributional characteristics, which are much easier to estimate
than the entire covariance functional form. There are  extensive
discussions on the estimations of spectral moments both
parametrically and nonparametrically  such as in the textbook
\cite{stein}. This plug-in-estimate strategy is used by
\cite{Taylor-Worsley-JASA} to handle such a composite null
hypothesis combined with a closed form $p$-value approximation by
means of the expected Euler characteristics. In that paper, the authors estimate and plug in the estimate of the
Lipschitz-Killing curvature to the expected Euler characteristic
function to approximate the tail probability of the supremum of a
$t$-field. Given that the main focus of this paper is on developing the approximations for the tail probabilities, we do not pursue parameter estimation aspects.

\paragraph{Hypothesis testing for aggregated data} The tail probability of $\int e^{\mu(t)+f(t)}dt$ itself can also serve as a $p$-value. In environmental science, the ozone concentration fluctuation is typically modeled to follow a log-normal distribution. For instance, it is found that the hourly averaged zone concentration typically admits a log-normal distribution; see \cite{HD02}. We let $\log\lambda(t) = \mu(t) +f(t)$ be the log-concentration of ozone at location $t$. One is interested in testing whether a region $T$ has an unusually high ozone level, that is, $H_0: E(\log\lambda(t)) = \mu(t) $ and $H_A: E(\log \lambda (t)) > \lambda(t)$ for $t\in T$.
Similar to the previous motivating example, in a regression setting, one may model $\mu(t) = \beta_0 + \xx^\top (t) \beta$ and consider $H_0: \beta_0 = \beta_0^*$ and $H_1:\beta_0 > \beta_0^*$.
One may reject the null if the observed aggregated ozone level, $b$,  in region $T$ is too high and a $p$-value is given by
$$P\Big(\int_T \lambda (t) dt = \int _T e^{\mu(t) + f(t)}dt > b\Big).$$
A similar argument as that for the point process application applies for the necessity of  incorporating a non-constant mean function $\mu(t)$ to reflect spatial inhomogeneity such as spatially varying covariates and periodicity.

\subsection{Financial applications}
The integrals of exponential
functions also play an important role in financial applications. This is
more related to the applied probability literature. In asset
pricing, the asset price indexed by time $t$ is typically modeled as
an exponential function of a Gaussian process, that is, $S(t) =
e^{f(t)}$. For instance, the Black-Scholes-Merton formula
\cite{BlaSch73, fMER73a} assumes that the price follows a geometric Brownian motion.
 Then, the payoff of an Asian option is the function of the
averaged price $\int_0^T e^{f(t)}dt$ and \eqref{Tail} is the
probability of exercising an Asian call option.

In the portfolio risk analysis, consider a portfolio consisting of
$n$ assets $(S_1,...,S_n)$ each of which is associated with a weight
(e.g. number of shares) $(w_1,...,w_n)$. One popular model assumes
that $(\log S_1,...,\log S_n)$ is a multivariate Gaussian random
vector. The value of the portfolio, $S= \sum_{i=1}^n w_i S_i$, is
then the sum of correlated log-normal random variables (see
\cite{DufPan97,Ahs78,BasSha01,GHS00,Due04}). Without loss of generality, we let $\sum w_{i}=n$.

One typical situation is that the portfolio size is large and the asset prices are usually highly correlated. One may employ a latent space approach used in the literature of social network. More specifically, we construct a Gaussian process $\{f(t): t\in T\}$ and associate each asset $i$ with a latent variable $t_i \in T$ so that $\log S_i = f(t_i)$. Then, the log asset prices fall into a subset of the continuous Gaussian process.
Further, there exists a (deterministic) process $w(t)$ so that $w(t_{i})= w_{i}$. Then, the total asset one unit share value of the portfolio is $\frac 1 n \sum w_{i} S_{i} = \frac 1 n \sum w(t_{i}) e^{f(t_{i})}$.

In this representation, the dependence among two assets is determined by $|t_i-t_j|$, which indicates the  economical distance between two firms. For instance, if firm $i$ is the supplier of firm $j$ then $t_i - t_j$ tends to be small. The spatial index $t$ may also include other social-economical indices.
This latent space approach has become popular in recent social network studies. For instance,
\cite{HRH02} considers a graph of $n$ nodes. The authors associate each node $i$ with a spatial latent variable $t_i$ and  model the probability of generating an edge between two nodes ($i$ and $j$) in a graph as a logistic function of $|t_i - t_j|$. Similar latent space models that project nodes onto a latent space can be found in \cite{HRT}.
Other approaches to represent interactions among variables via latent structures have been used;  see, for instance, \cite{XFS10,Sni02} and references therein.

In the asymptotic regime that $n\rightarrow \infty$ and the correlations among the asset prices become close to one, the subset $\{t_{i}\}$ becomes denser in $T$. Ultimately, we obtain the limit
$$\frac 1 n \sum_{i=1}^{n} w_{i} S_{i} \rightarrow \int w(t) e^{f(t)} h(t) dt$$
where $h(t)$ indicates the limiting spatial distribution of $\{t_{i}\}$ in $T$. Let $\mu(t)= \log w(t) + \log h(t)$. Then the tail probability of the (limiting) unit share price is
$$P\left(\int e^{f(t) + \mu(t)}dt > b\right).$$
It is necessary to include a varying mean in the above representation to incorporate the variation of the weights assigned to different assets and the inhomogeneity of the limiting distribution of $t_{i}$'s.


\section{Main results}\label{SecMain}

\subsection{Problem setting}

Consider a homogeneous Gaussian random field $\{f(t): t\in T\}$
living on a domain $T\subset R^d$. Let the covariance function be
$$C(t-s) = Cov(f(t),f(s)).$$
We impose the following assumptions:
\begin{itemize}
  \item[C1] $f$ is homogenous with $Ef(t)=0$ and $Ef^2(t) =1$.
  \item[C2] $f$ is almost surely at least three times differentiable with respect to $t$ and $\mu(t)\in C^3(T)$.
  \item[C3] $T$ is a $d$-dimensional Borel measurable compact set of $R^d$ with piecewise smooth boundary.
  \item[C4] The Hessian matrix of $C(t)$ at the origin is $-I$, where $I$ is the $d\times d$ identity matrix.
  \item[C5] For each $t\in R^d$, the function $C(\lambda t)$ is a non-increasing function of $\lambda \in R^+$.
  \item[C6] If $\mu(t)$ is not a constant, the maximum of $\mu(t)$ is not attained at the boundary of $T$.
\end{itemize}
Let $N(\cdot)$ be a point process such that conditional on $\{f(t):
t\in T\}$, $N(\cdot)$ is distributed as a Poisson process with
intensity $\lambda (t) = e^{\mu(t) + \sigma f(t)}$. For each Borel measurable set $A\subset T$, let
\begin{equation}\label{IntI}
  I(A) = \int_A e^{\mu(t) + \sigma f(t)}dt.
\end{equation}
 Throughout this paper, we are interested in developing closed form approximations to
\begin{equation}\label{Tails}
  P\left(\int_T e^{\mu(t)+\sigma f(t)}dt > b\right), \quad \mbox{and }~~P(N(T)>b)
\end{equation}
as $b\rightarrow \infty$.

\begin{remark}
Condition C1 assumes unit variance. We treat the standard deviation $\sigma$ as an additional parameter and consider $\int e^{\mu(t) + \sigma f(t)}dt$. Condition C2 is rather a strong assumption. It implies that $C(t)$ is at least 6 times differentiable and the first, third, and fifth derivatives at the origin are all zero. Condition C3 restricts the results to finite horizon. Condition C4 is introduced to simplify notations. For any Gaussian process $g(t)$ with covariance function $C_g(t)$ and $\Delta C_g(0) = -\Sigma$ and $\det (\Sigma)>0$, C4 can be obtained by an affine transformation by letting $g(t)= f(\Sigma^{1/2}t)$ and
$$\int_T e^{\mu(t) + \sigma g(t)}dt = \det(\Sigma^{-1/2})\int _{\{s: \Sigma^{-1/2}s\in T\}}e^{\mu(\Sigma^{-1/2}s)+\sigma f(s)}ds,$$
where for each positive semi-definite matrix $\Sigma$ we let
$\Sigma^{1/2}$ be a symmetric matrix such that
$\Sigma^{1/2}\Sigma^{1/2}=\Sigma$. Conditions C5 and C6 are
imposed for technical reasons.

\end{remark}

\begin{remark}
The setting in \eqref{Tails} also incorporates the case in which the
integral is with respect to other measures with smooth densities
with respect to the Lebesgue measure. Then, if $\nu(dt) = \kappa(t)
dt$, we will have that
$$\int_A e^{\mu(t) +\sigma f(t) }\nu(dt) = \int_A e^{\mu(t)+ \log \kappa (t) +\sigma f(t)} dt,$$
which shows that the density can be absorbed by the mean function as long as $\kappa (t)$ is bounded away from zero and infinity on $T$.
\end{remark}

\begin{remark}
The results presented in the current paper are directly applicable to some of the applications in Section \ref{SecApp} such as the approximation of $p$-value for simple null hypothesis. Some conditions required by the theorems may need to be relaxed to reflect practical circumstances for other applications. Nonetheless, the current analysis forms a standpoint of further study of more general cases.
\end{remark}

\subsection{Notations}

To simplify the discussion, we define a set of notations constantly used in the later development and provide some basic calculations of Gaussian random field. Let ``$\partial$'' denote the gradient and ``$\Delta$'' denote the Hessian matrix with respect to $t$. The notation ``$\partial^2$'' is used to denote the vector of second derivatives. The difference between $\partial^2 f(t)$ and $\Delta f(t)$ is that $\Delta f(t)$ is a $d\times d$ symmetric matrix whose diagonal and upper triangle consists of elements of $\partial^2 f(t)$.
Similarly, we will later use $\zz$ (and $\tilde{\zz}$) to denote the matrix version of the vector $z$ (and $\tilde{z}$), that is, $\mathbf{z}$ (and $\tilde{\mathbf{z}}$) is a symmetric matrix whose upper triangle consists of elements in the vector $z$ (and $\tilde{z}$).
Further, let $\partial_j f(t)$ be the partial derivative with respect to the $j$-th element of $t$. We define $u$ as the larger solution to
$$\left(\frac{2\pi}{\sigma}\right)^\frac{d}{2}u^{-\frac{d}{2}}e^{\sigma u}=b.$$ Note that when $b$ is large, the above equation generally has two solutions. One is on the order of $\log b$; the other one is close to zero. We choose the larger solution as our $u$.
Lastly, we define the following set of notations
\begin{eqnarray*}
\mu _{1}(t) &=&-(\partial _{1}C(t),...,\partial _{d}C(t)),\\\mu
_{2}(t) &=&\Big(\partial^2_{ii}C(t),i=1,...,d; \partial^2
_{ij}C(t),i=1,...,d-1,j=i+1,...,d\Big),\\ \mu_{02}^\top&=&\mu _{20}
=\mu _{2}(0).
\end{eqnarray*}

It is well known that (c.f. Chapter 5.5 of \cite{AdlTay07})
$\left(f(0),\partial^2 f(0), \partial f(0), f(t)\right)$ is a
multivariate Gaussian random vector with mean zero and covariance
matrix
\[
\left(
\begin{array}{cccc}
1 & \mu _{20} & 0 & C(t) \\
\mu _{02} & \mu _{22} & 0 & \mu _{2}^{\top }(t) \\
0 & 0 & I & \mu _{1}^{\top }(t) \\
C(t) & \mu _{2}(t) & \mu _{1}(t) & 1%
\end{array}%
\right)
\]
where the matrix $\mu_{22}$ is a $d(d+1)/2$-dimensional positive definite matrix and contains the 4th order spectral moments arranged in an appropriate order according to the order of elements in $\partial^2 f(0)$. Define
\begin{equation}\label{gamma}
\Gamma =\left(
\begin{array}{cc}
1 & \mu _{20} \\
\mu _{02} & \mu _{22}%
\end{array}%
\right). \end{equation}

For notation convenience, we write $a_u = O(b_u)$ if there exists a
constant $c>0$ independent of everything such that $a_u \leq cb_u$
for all $u>1$, and $a_u = o(b_u)$ if $a_u/b_u \rightarrow 0$ as
$u\rightarrow \infty$ and the convergence is uniform in other
quantities. We write $a_u = \Theta (b_u)$ if $ a_u =O(b_u)$ and $b_u
= O(a_u)$. In addition, we write $X_u = o_p(a_u)$ if $X_u/a_u \overset p
\rightarrow 0$ as $u\rightarrow \infty$ and $E^{\theta X_u/a_u}\rightarrow 1$ uniformly for $\theta$ over a compact interval around the origin. Similarly, we define that $X_u = O_p(a_u)$ if $E^{\theta X_u/a_u}$ is bounded away from zero and infinity for all $u\in R^+$ and $\theta$ in a compact interval around zero. We write $a_u\sim b_u$ if $a_u / b_u \rightarrow 1$ as $u\rightarrow \infty$.

\subsection{The main theorems}\label{SecThm}
The main theorems of this paper are presented as follows. The following theorem is the central result of this paper whose proof is provided in Section \ref{SecProof}.
\begin{theorem}\label{ThmG}
Consider a Gaussian random field $\{f(t): t\in T\}$ living on a
domain $T$ satisfying conditions C1-6. Let $I(T)$ be as defined in \eqref{IntI}. Then,
$$P(I(T) > b) \sim u^{d-1}\int_{T}\exp\left\{-\frac{(u-\mu_\sigma(t))^2}{2}\right\}\cdot H(\mu,\sigma,t)dt,$$
as $b\rightarrow \infty$, where
\begin{equation}\label{musig}
\mu_{\sigma}(t)= \mu(t)/\sigma,
\end{equation}
$u$ is the larger solution to
$$\left(\frac{2\pi}{\sigma}\right)^\frac{d}{2}u^{-\frac{d}{2}}e^{\sigma u}=b,$$
$H(\mu,\sigma,t)$ is defined as
\begin{eqnarray}
&&\frac{|\Gamma|^{-\frac{1}{2}}}{(2\pi)^\frac{(d+1)(d+2)}{4}}
\exp\left\{\frac{\mathbf{1}^T\mu_{22}\mathbf{1}+\sum_i\partial_{iiii}^{4}C(0)}{8\sigma^2}
+\frac{d\cdot \mu_\sigma(t)+Tr(\Delta
\mu_\sigma(t))}{2\sigma}+|\partial\mu_\sigma(t)|^2\right\}.\nonumber\nonumber\\
&& \times \int_{z\in R^{d(d+1)/2}}\exp\left\{ - \frac{1}{2}
\left[\frac{|\mu_{20}\mu_{22}^{-1}z|^2}{1-\mu_{20}\mu_{22}^{-1}\mu_{02}}
+\left
|\mu_{22}^{-1/2}z-\frac{\mu_{22}^{1/2}\mathbf{1}}{2\sigma}\right|^2
\right]\right\}dz,\nonumber\end{eqnarray}
and
$$\mathbf{1} =\mathbf{(}\underset{d}{\underbrace{1,...,1}},\underset{d(d-1)/2%
}{\underbrace{0,...,0}}\mathbf{)}^{\top }.$$
\end{theorem}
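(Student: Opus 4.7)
The plan is to prove Theorem \ref{ThmG} by a change-of-measure and Laplace-expansion argument analogous to \cite{NSY08,Liu10}, suitably generalised to handle a spatially varying mean. The heuristic is as follows. Since C4 forces $\mu_{20}=-\mathbf{1}^{\top}$, conditionally on $f(t^{*})=v$ being large and $\partial f(t^{*})=0$ one has $\Delta f(t^{*})\approx -vI$, so a second-order Taylor expansion near such a $t^{*}$ gives
$$I(T)\approx e^{\mu(t^{*})+\sigma v}\int_{\mathbb{R}^{d}}e^{-\sigma v|s-t^{*}|^{2}/2}ds=e^{\mu(t^{*})+\sigma v}(2\pi/\sigma)^{d/2}v^{-d/2}.$$
Setting this equal to $b$ and using the defining equation of $u$ forces $v\approx u-\mu_{\sigma}(t^{*})$, which explains the Gaussian factor $\exp\{-(u-\mu_{\sigma}(t))^{2}/2\}$ in the statement.

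To make the heuristic rigorous I would localise by restricting attention to a neighbourhood of the set of high local maxima of $\sigma f+\mu$ in $T$. For each $t\in T$, introduce a tilted measure $Q_{t}$ under which the triple $X(t)=(f(t),\partial f(t),\partial^{2} f(t))$ has a Gaussian density concentrated around the critical-point configuration $(u-\mu_{\sigma}(t),-\partial\mu_{\sigma}(t),-u\mathbf{1})$, with the remaining randomness in $f$ preserved via the $P$-conditional law given $X(t)$. Averaging $Q_{t}$ against Lebesgue measure on $T$ yields
$$P(I(T)>b)=\int_{T}E^{Q_{t}}\!\left[\frac{dP}{dQ_{t}}\,\mathbf{1}(I(T)>b)\right]dt,$$
with $dP/dQ_{t}$ an explicit Gaussian density quotient involving the matrix $\Gamma$ from \eqref{gamma}.

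Under $Q_{t}$, a Taylor expansion around $t$ and Laplace's method give
$$I(T)=e^{\mu(t)+\sigma f(t)}(2\pi/\sigma)^{d/2}u^{-d/2}|\mathbf{z}|^{-1/2}(1+o_{p}(1)),$$
where $\mathbf{z}=-\Delta f(t)/u$, so after taking logarithms the indicator $\mathbf{1}(I(T)>b)$ becomes an explicit inequality on $f(t)$ with corrections depending on $\partial\mu(t)$, $\Delta\mu(t)$ and $z$. Integrating against the marginal density of $f(t)$ near $u-\mu_{\sigma}(t)$ produces the Gaussian factor $\exp\{-(u-\mu_{\sigma}(t))^{2}/2\}$; the Jacobian $v^{d}$ arising from the map $v\mapsto I(T)$, combined with the Laplace estimate $\int v^{d}\phi(v)\,dv\sim u^{d-1}\phi(u)$, produces the factor $u^{d-1}$. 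Integrating against the conditional density of $(\partial f(t),\partial^{2} f(t))$ given $f(t)$, using the block structure of $\Gamma$, produces the determinant factor $|\Gamma|^{-1/2}/(2\pi)^{(d+1)(d+2)/4}$, the term $|\partial\mu_{\sigma}(t)|^{2}$ coming from the mean shift of $\partial f(t)$, the trace contribution $\mathrm{Tr}(\Delta\mu_{\sigma}(t))/(2\sigma)$, and finally the inner $z$-integral in the statement. The constant $(\mathbf{1}^{\top}\mu_{22}\mathbf{1}+\sum_{i}\partial^{4}_{iiii}C(0))/(8\sigma^{2})$ is produced by matching the contribution of the third-order Taylor remainder of $f$ against the shifted second derivative during the Laplace computation.

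The main obstacle will be to justify the Laplace approximation of $I(T)$ uniformly over $t$ and over the random configuration $X(t)$. In particular, one must show that contributions from $|s-t|$ not of order $u^{-1/2}$ are negligible and that configurations in which $\mathbf{z}$ is not positive definite or is near-degenerate contribute a vanishing amount to the final integral. This will rely on Borell-TIS-type concentration for the residual field $g(s)=f(s)-f(t)-\partial f(t)^{\top}(s-t)-\tfrac{1}{2}(s-t)^{\top}\Delta f(t)(s-t)$ together with the monotonicity hypothesis C5 for handling $|s-t|$ of macroscopic size; hypothesis C6 is used to ensure that the dominant base point lies in the interior, so boundary strata of $T$ do not produce extra terms. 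A related technical step is to rule out double-maximum configurations, showing that the probability of two well-separated local exceedances simultaneously producing $I(T)>b$ is of strictly smaller order, so that the change-of-measure calculation does not double-count.
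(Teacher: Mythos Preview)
Your heuristic and overall strategy are close to the paper's, but there is a genuine gap in the change-of-measure step that, as written, makes the argument circular.

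You write
\[
P(I(T)>b)=\int_{T}E^{Q_{t}}\!\left[\frac{dP}{dQ_{t}}\,\mathbf{1}(I(T)>b)\right]dt.
\]
For each fixed $t$ one has $E^{Q_{t}}[(dP/dQ_{t})\,\mathbf{1}_{A}]=P(A)$ identically, so the right-hand side is $mes(T)\cdot P(I(T)>b)$ and nothing has been gained. The correct identity uses the \emph{mixture} measure $Q=mes(T)^{-1}\int_{T}Q_{t}\,dt$, under which
\[
\frac{dP}{dQ}=\frac{mes(T)}{\int_{T}(dQ_{t}/dP)\,dt},
\]
and then one conditions on the mixing index $\tau$:
\[
P(I(T)>b)=\int_{T}\frac{1}{mes(T)}\,E^{Q}_{\tau}\!\left[\frac{dP}{dQ}\,;\,I(T)>b\right]d\tau.
\]
The point is that $dP/dQ$ is \emph{not} ``an explicit Gaussian density quotient'': it is the reciprocal of the integral $K=\int_{T}\exp\{(u-\mu_{\sigma}(t))(f(t)+\mu_{\sigma}(t))+\tfrac{1}{2}\mu_{\sigma}^{2}(t)\}\,dt$, and a substantial part of the work (Part~2 of the paper's proof) is a Laplace analysis of this integral to show that, conditionally on $(f(\tau),\partial f(\tau),\partial^{2}f(\tau))$ on a high-probability set, $K$ concentrates as a deterministic function of these values times $u^{-d}$. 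This is also what makes your proposed ``double-maximum'' step unnecessary: if two well-separated points are simultaneously high, $K$ is correspondingly large and $dP/dQ$ is small, so no separate exclusion argument is needed.

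Two smaller points. First, the paper tilts only $f(\tau)$ (see \eqref{RN}), not the full triple $(f,\partial f,\partial^{2}f)$; the derivatives are handled by subsequent conditioning, which keeps $K$ as an integral involving only $f(t)$ and makes the Laplace analysis tractable. If you instead tilt the full triple as you propose, the resulting $K$ involves $\partial f(t)$ and $\partial^{2}f(t)$ for every $t\in T$ and is considerably harder to control. Second, your explanation of the factor $u^{d-1}$ via ``$\int v^{d}\phi(v)\,dv$'' is not quite how it arises: in the paper the factor $u^{d}$ comes from $K^{-1}\sim u^{d}\cdot(\cdots)$, and the remaining $u^{-1}$ from the Jacobian of the change of variables $w\mapsto \tilde{A}=\tilde{u}A/\sigma$ when integrating out $f(\tau)$.
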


\begin{corollary}\label{CorGRF}
Under the conditions of Theorem \ref{ThmG}, if $\mu(t)$ has one
unique maximum in $T$ denoted by $t_*$, then
\begin{eqnarray*}
P(I(T) > b)   \sim (2\pi )^{d/2}\det (\Delta \mu_\sigma (t_{\ast
}))^{-1/2}H(\mu,\sigma,t_{\ast
})u^{d/2-1}\exp \left\{ -\frac{(u-\mu_{\sigma} (t_{\ast }) )^{2}}{2}%
\right\} .
\end{eqnarray*}
\end{corollary}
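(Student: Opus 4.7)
\textbf{Proof proposal for Corollary \ref{CorGRF}.} The plan is to derive the corollary from Theorem \ref{ThmG} by evaluating the integral
$$\int_T \exp\!\Big\{-\tfrac{(u-\mu_\sigma(t))^2}{2}\Big\}\,H(\mu,\sigma,t)\,dt$$
via a standard Laplace (saddle-point) expansion around the unique maximizer $t_*$ of $\mu_\sigma(t)=\mu(t)/\sigma$. The idea is that as $b\to\infty$ (and hence $u\to\infty$), the quadratic term $-(u-\mu_\sigma(t))^2/2$ in the exponent is, up to an additive $-u^2/2$, equal to $u\,\mu_\sigma(t)-\mu_\sigma(t)^2/2$, and the linearly growing phase $u\,\mu_\sigma(t)$ concentrates the mass of the integral in a shrinking neighborhood of $t_*$.

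First I would pull out the pieces that do not depend on $t$ to write
$$P(I(T)>b)\sim u^{d-1}e^{-u^2/2}\int_T e^{u\mu_\sigma(t)}\,g(t)\,dt,\qquad g(t):=e^{-\mu_\sigma(t)^2/2}H(\mu,\sigma,t).$$
By assumption C6 the unique maximizer $t_*$ lies in the interior of $T$, so $\partial\mu_\sigma(t_*)=0$ and $\Delta\mu_\sigma(t_*)$ is negative definite (non-degeneracy is implicit in the statement via the factor $\det(\Delta\mu_\sigma(t_*))^{-1/2}$). I would then apply Laplace's method in the standard way: rescale $s=u^{1/2}(t-t_*)$, Taylor-expand
$$\mu_\sigma(t)=\mu_\sigma(t_*)+\tfrac12(t-t_*)^\top\Delta\mu_\sigma(t_*)(t-t_*)+O(|t-t_*|^3),$$
and observe that on the rescaled scale the cubic remainder contributes $O(u^{-1/2})$ inside the exponent while the factor $g(t)$ converges to $g(t_*)$. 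The Gaussian integral evaluates to
$$(2\pi/u)^{d/2}\,|\det\Delta\mu_\sigma(t_*)|^{-1/2},$$
giving
$$\int_T e^{u\mu_\sigma(t)}g(t)\,dt \sim e^{u\mu_\sigma(t_*)}\,g(t_*)\,(2\pi/u)^{d/2}\,|\det\Delta\mu_\sigma(t_*)|^{-1/2}.$$
Substituting back into the pre-factor and combining exponents via
$$-\tfrac{u^2}{2}+u\mu_\sigma(t_*)-\tfrac{\mu_\sigma(t_*)^2}{2}=-\tfrac{(u-\mu_\sigma(t_*))^2}{2},$$
together with $u^{d-1}\cdot u^{-d/2}=u^{d/2-1}$, reproduces the claimed asymptotic.

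The only step requiring a little care is the justification that the integral is genuinely concentrated at $t_*$, i.e. that the contribution from the complement of any fixed neighborhood of $t_*$ is negligible relative to the Laplace approximation. For this, I would fix $\delta>0$, note that on $T\setminus B(t_*,\delta)$ one has $\mu_\sigma(t)\leq \mu_\sigma(t_*)-\eta$ for some $\eta=\eta(\delta)>0$ by continuity and uniqueness of the maximizer, and use boundedness of $H(\mu,\sigma,t)$ on the compact set $T$ (which follows from C2 and the definition of $H$) to bound this tail by a constant times $e^{u\mu_\sigma(t_*)-u\eta}$, which is exponentially smaller than the main term. The local analysis near $t_*$ then reduces to the standard Laplace estimate above.

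The main obstacle is essentially bookkeeping rather than analysis: one must verify that the sign/orientation convention in the statement ($\det(\Delta\mu_\sigma(t_*))^{-1/2}$ at a maximum, where the Hessian is negative definite) matches $|\det\Delta\mu_\sigma(t_*)|^{-1/2}=\det(-\Delta\mu_\sigma(t_*))^{-1/2}$ produced by Laplace's method, and that $H(\mu,\sigma,t)$ is continuous at $t_*$ (which is immediate from its explicit definition in Theorem \ref{ThmG}, since all $\mu_{ij}$'s and $\Gamma$ are constants and the $t$-dependence enters only through $\mu_\sigma(t),\,\partial\mu_\sigma(t),\,\Delta\mu_\sigma(t)$, all of which are continuous under C2). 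No further probabilistic input beyond Theorem \ref{ThmG} is needed.
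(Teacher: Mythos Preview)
Your proposal is correct and follows exactly the approach the paper uses: the paper's proof is the one-liner ``The result is immediate by expanding $\mu_{\sigma}(t)$ around $t_*$ up to the second order,'' which is precisely your Laplace expansion of the integral in Theorem~\ref{ThmG}. Your version simply spells out the localization and Gaussian-integral bookkeeping that the paper leaves implicit.
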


\begin{proof}
[Proof of Corollary \ref{CorGRF}] The result is immediate by
expanding $\mu_{\sigma}(t)$ around $t_*$ up to the second order.
\end{proof}

\begin{theorem}\label{ThmC}
Assume that the Gaussian process $f(t)$ satisfies the conditions in
Theorem \ref{ThmG}. Consider a point process $\{N(A): A\in \mathcal
B(T)\}$, where $\mathcal B(T)$ denotes the Borel subsets of $T$.
Suppose that there exists a process $\log \lambda(t) = \mu(t) +
\sigma f(t)$ such that given $\{\lambda(t): t\in T\}$, $N(\cdot)$ is
a Poisson process with intensity $\lambda (t)$. Then,
$$P(N(T)>b)\sim  P(I(T)>b)$$
as $b\rightarrow \infty$.
\end{theorem}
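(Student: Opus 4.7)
The plan is to condition on the Gaussian field $f$ and exploit the fact that, given $f$, $N(T)$ is a Poisson random variable with conditional mean $I(T)$. Poisson fluctuations are of order $\sqrt{I(T)}$, which is much smaller than $I(T)$ when $I(T)$ is large, so the Poisson randomness should not affect the leading-order tail. Concretely, I would sandwich $P(N(T)>b)$ between $P(I(T)>b(1-\epsilon_b))$ and $P(I(T)>b(1+\epsilon_b))$ for a carefully chosen sequence $\epsilon_b\downarrow 0$, and then invoke Theorem \ref{ThmG} to show that these perturbed tails are asymptotically equivalent to $P(I(T)>b)$.

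For the upper bound, I would write
\begin{equation*}
P(N(T)>b) \leq P(I(T)>b(1-\epsilon_b)) + E\bigl[P(N(T)>b \mid f)\,\mathbf{1}(I(T)\leq b(1-\epsilon_b))\bigr],
\end{equation*}
and use the standard Chernoff bound for Poisson random variables, which gives $P(N(T)>b\mid f)\leq \exp(-c\epsilon_b^2 b)$ uniformly on the event $\{I(T)\leq b(1-\epsilon_b)\}$ for some constant $c>0$. Symmetrically, the lower bound
\begin{equation*}
P(N(T)>b) \geq P(I(T)>b(1+\epsilon_b)) - E\bigl[P(N(T)\leq b\mid f)\,\mathbf{1}(I(T)>b(1+\epsilon_b))\bigr]
\end{equation*}
and the Chernoff bound $P(N(T)\leq b\mid f)\leq \exp(-c\epsilon_b^2 b)$ on the complementary event yields $P(N(T)>b)\geq (1-e^{-c\epsilon_b^2 b})P(I(T)>b(1+\epsilon_b))$.

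To close the argument I need $\epsilon_b$ to satisfy two requirements: $e^{-c\epsilon_b^2 b}=o(P(I(T)>b))$, and $P(I(T)>b(1\pm\epsilon_b))\sim P(I(T)>b)$. By Theorem \ref{ThmG}, $P(I(T)>b)$ decays like $u^{d/2-1}e^{-u^2/2}$ with $u\sim (\log b)/\sigma$, so it is of order $\exp(-\Theta((\log b)^2))$ up to polynomial factors. Replacing $b$ with $b(1\pm\epsilon_b)$ shifts the corresponding $u$ value by approximately $\pm\epsilon_b/\sigma$, which alters the exponent in Theorem \ref{ThmG} by $O(\epsilon_b u)=O(\epsilon_b\log b)$. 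Hence I need $\epsilon_b\log b\to 0$ for the ratio of tails to tend to $1$, and $\epsilon_b^2 b \gg (\log b)^2$ for the Chernoff error to be negligible relative to $P(I(T)>b)$; both requirements are met by, e.g., $\epsilon_b=(\log b)^{-2}$. The main obstacle is the second requirement: one must verify that the Gaussian-tail expression in Theorem \ref{ThmG} is sufficiently regular in $b$ that a small multiplicative perturbation of the threshold produces only a $1+o(1)$ multiplicative change in the tail. This is a straightforward calculation using the defining equation of $u$ and expanding $(u'-\mu_\sigma(t_*))^2$ in the shift $u'-u$, but it requires care to ensure uniformity of the remainder terms as $\epsilon_b\to 0$ and $b\to\infty$ simultaneously.
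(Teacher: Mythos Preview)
Your proposal is correct and follows essentially the same approach as the paper: condition on $f$, use Poisson concentration to reduce to perturbed thresholds for $I(T)$, and invoke Theorem~\ref{ThmG} to show the perturbation is negligible. The only cosmetic differences are that the paper uses asymmetric additive shifts $b+b^{1/2+\varepsilon}$ and $b-b^{1-\varepsilon}$ (appealing to the Poisson CLT for the lower bound and a large-deviations bound for the upper), whereas you use symmetric multiplicative shifts $b(1\pm\epsilon_b)$ with Chernoff bounds on both sides; your choice $\epsilon_b=(\log b)^{-2}$ works for exactly the reasons you state.
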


\begin{proof}[Proof of Theorem \ref{ThmC}]
We prove this approximation from both sides. For $\varepsilon >0$ small
enough, we have that%
\begin{eqnarray}\label{tt}
P\left( N(T)>b\right)  &\geq &P\left(N(T)>b~;~ I(T)\geq b+b^{1/2+\varepsilon }\right) \\
&=&(1+o(1))P\left(I(T)\geq b+b^{1/2+\varepsilon }\right) \notag\\
&=&(1+o(1))P\left(I(T)\geq b\right).\notag
\end{eqnarray}%
The second step is due to the fact that conditional on $I(T)$
$$\frac{N(T) - I(T)}{\sqrt{I(T)}} \rightarrow N(0,1) $$
in distribution as $I(T)\rightarrow \infty$. Therefore, we obtain that
\begin{equation*}
P\left(N(T)>b~|~I(T)\geq b+b^{1/2+\varepsilon }\right)\rightarrow 1.
\end{equation*}
Together with the fact that
$$P\left(N(T)>b~;~ I(T)\geq b+b^{1/2+\varepsilon }\right) = P\left(N(T)>b~|~I(T)\geq b+b^{1/2+\varepsilon }\right)P\left(I(T)\geq b+b^{1/2+\varepsilon }\right),$$
we obtain the second step of \eqref{tt}.
The last step that $P(I(T)\geq b+b^{1/2+\varepsilon })=(1+o(1))P(I(T)\geq b)$
is a direct application of Theorem \ref{ThmG}. For the upper bound, we have that%
\begin{eqnarray*}
P\left( N(T)>b\right)  &=&P\left(N(T)>b~;~I(T)\geq
b-b^{1-\varepsilon
}\right)+P\left(N(T)>b~;~I(T)\leq b-b^{1-\varepsilon }\right) \\
&\leq &(1+o(1))P\left(I(T)>b\right)+P\left(N(T)>b~|~I(T)=b-b^{1-\varepsilon }\right) \\
&=&(1+o(1))P(I(T)>b).
\end{eqnarray*}%
The last step uses the fact that%
\begin{equation*}
P\left(N(T)>b~|~I(T)=b-b^{1-\varepsilon }\right)\leq \exp \left\{
-(1/2+o(1))b^{1-2\varepsilon }\right\} =o(1)P(I(T)>b).
\end{equation*}%
The bound of the tail of a Poisson distribution can be derived by the
standard technique of large deviations theory \cite{Dembo} and
therefore is omitted.
\end{proof}

\begin{remark}
The result in Theorem \ref{ThmC} suggests that an observation of a
large number of points in a region $T$ is mainly caused by a high
level of its underlying intensity. Technically, this is because the
distribution of $N(T)$ can be roughly considered as a convolution of
the distribution of $\int e^{f(t)}dt$ and a Poisson distribution.
Note that $\int e^{f(t)}dt$ is approximately a log-normal random
variable, which has a much heavier tail than that of a Poisson
random variable. Therefore, the tail behavior of $N(T)$ is mostly
dominated by the tail of its underlying intensity.
\end{remark}

\subsection{The change of measure}\label{SecChange}

In this subsection, we propose a change of measure $Q$ which is
central to the proof of Theorem \ref{ThmG}. Let $P$ be the original measure. The
measure $Q$ is defined such that $P$ and $Q$ are mutually absolutely
continuous with the Radon-Nikodym derivative being
\begin{equation}\label{RN}\frac{dQ}{dP} = \int_T\frac{1}{mes(T)}\cdot \frac{\exp\left\{-\frac{1}{2}(f(t)-u+\mu_\sigma(t))^2\right\}} {\exp\left\{-\frac{1}{2}f(t)^2\right\}}dt,\end{equation}
where $mes(\cdot)$ denotes Lebesgue measure. This change of measure is first proposed by \cite{NSY08} to derive the high excursion probabilities of approximately Gaussian processes.
It is more intuitive to describe the measure $Q$ from a simulation
point of view. In order to simulate $f(t)$ under the measure $Q$,
one can do the following two steps:
\begin{enumerate}
  \item Simulate a random variable $\tau$ uniformly over $T$ with respect to the Lebesgue measure.
  \item Given the realized $\tau$, simulate the Gaussian process $f(t)$ with mean $(u-\mu_\sigma(\tau)) C(t-\tau)$ and covariance function $C(t)$.

\end{enumerate}
It is not hard to verify that the above two-step procedure is
consistent with the Radon-Nikodym derivative in \eqref{RN}. The
measure $Q$ is designed such that the distribution of $f$ under the
measure $Q$ is approximately the conditional distribution of $f$
given $\int_T e^{f(t)}dt >b $ under the measure $P$. Under $Q$, a
random variable $\tau$ is first sampled uniformly over $T$, then
$f(\tau)$ is simulated with a large mean at level $u-\mu_\sigma(\tau)$. This implies
that the high level of the integral $\int_T e^{\mu(t) + \sigma
f(t)}dt$ is mostly caused by the fact that the field reaches a high
level at one location $t^*$ and such a location $t^*$ is very close
to $\tau$. Therefore, the random index $\tau$ localizes the maximum
of the field. In particular, one can write the tail probability as
\begin{eqnarray*}
P\left( \int_{T}e^{\mu (t)+\sigma f(t)}dt>b\right) &=&E^{Q}\left[ \frac{dP}{dQ}%
;\int_{T}e^{\mu (t)+\sigma f(t)}dt>b\right],
\end{eqnarray*}
where we use $E^Q$ to denote the expectation under $Q$ and $E$ to denote that under $P$.

In what follows, we explain the main result in Theorem \ref{ThmG}
and how the change of measure helps in deriving the asymptotics. To
simplify the discussion, we proceed by assuming that $\mu(t) \equiv
0$ and $\sigma=1$. Upon considering zero (constant) mean, we obtain
from the result of Theorem \ref{ThmG} that $P(\int_T e^{f(t)}dt >
b)= \Theta(1) P(\sup_T f(t) > u)$ (c.f. \cite{AdlTay07}). This
suggests that the large value of the exponential integral at the
level $b$ is largely caused by the high excursion of $\sup_T f(t)$
at a level $u$. The conditional distribution of $f(t)$ given a high
excursion at level $u$ (the Slepian model) is well known
(\cite{ATW09}). We proceed with a rough mean calculation. Suppose
that $f(t)$ attains a large value at the origin of level $u$. Then
the conditional field will have expectation $E[f(t)|f(0)=u] =
uC(t)$. We expand the covariance function as
$$uC(t)\approx u - \frac u 2 |t|^2.$$
Therefore, one may expect to choose $u$ such that conditional on $f(0) = u$
\begin{equation}\label{app}
\int _T e^{f(t)}dt \approx \int_{R^d} e^{u-\frac u 2 |t|^2}dt = \left(2\pi\right)^{d/2} u^{-d/2}e^u =b.
\end{equation}
This is precisely how $u$ is selected in Theorem \ref{ThmG}. The above calculation ignores the higher order expansions of $C(t)$ and the deviation of the conditional field from its expectation. It turns out that these variations do not affect the asymptotic decaying rate of the tail probability. They only contribute to the constant term.

\section{Proof of Theorem \ref{ThmG}}\label{SecProof}



The proof of Theorem \ref{ThmG} requires several lemmas. To
facilitate the reading, we arrange their statements in
Section \ref{SecLem}.

Note that%
\begin{eqnarray*}
P\left( \int_{T}e^{\mu (t)+\sigma f(t)}dt>b\right) &=&E^{Q}\left[ \frac{dP}{dQ}%
;\int_{T}e^{\mu (t)+\sigma f(t)}dt>b\right] \\
&=&\int_{T}\frac{1}{mes(T)}E^{Q}%
\left[ \left. \frac{dP}{dQ};\int_{T}e^{\mu (t)+\sigma f(t)}dt>b\right\vert
\tau \right] d\tau .
\end{eqnarray*}
Furthermore, we use the notation that $E_{\tau }^{Q}[\cdot
]=E^{Q}[\cdot |\tau ]$. For each $\tau $, we plug in \eqref{RN} and further write the
expectation inside the above integral as
\begin{eqnarray} \label{obj}
&&~~~~~E_{\tau }^{Q}\left[ \frac{dP}{dQ};\int_{T}e^{\mu (t)+\sigma f(t)}dt>b\right]\\
&&~~~~~~=mes(T)E_{\tau }^{Q}\left[ \frac{1}{\int_{T}e^{-\frac{1}{2}\left(
f(t)-u+\mu _{\sigma }(t)\right) ^{2}+\frac{1}{2}f^{2}(t)}dt};\int_{T}e^{\mu
(t)+\sigma f(t)}dt>b\right]  \notag\\
&&~~~~~~=mes(T)e^{u^{2}/2}E_{\tau }^{Q}\left[ \frac{1}{\int_{T}e^{(u-\mu _{\sigma
}(t))(f(t)+\mu _{\sigma }(t))+\frac{1}{2}\mu _{\sigma }^{2}(t)}dt}%
;\int_{T}e^{\mu (t)+\sigma f(t)}dt>b\right] .  \notag
\end{eqnarray}
We write
\begin{equation}\label{lambda}
\Lambda(\tau)=e^{u^{2}/2}E_{\tau }^{Q}\left[ \frac{1}{\int_T
e^{(u-\mu _{\sigma
}(t))(f(t)+\mu _{\sigma }(t))+\frac{1}{2}\mu _{\sigma }^{2}(t)}dt}%
;\int_{T}e^{\mu (t)+\sigma f(t)}dt>b\right].
\end{equation}
Note that conditional on $\tau $, for every set $A$,
\begin{equation}\label{equav}
Q\left( f(\cdot )\in A|\tau \right) =P\Big(f(\cdot )+(u-\mu _{\sigma }(\tau
))C(\cdot -\tau )\in A\Big),
\end{equation}%
that is, the conditional distribution of $f(t)$ (given $\tau$) under $Q$  equals to the distribution of $%
f(t)+(u-\mu _{\sigma }(\tau ))C(t-\tau )$ under $P$. This equivalence can be derived from the  two-step simulation procedure in Section \ref{SecChange}. Therefore, we can simply replace $f(t)$ by $f(t)+(u-\mu _{\sigma }(\tau ))C(t-\tau )$, replace $Q$ by $P$, and write
\begin{eqnarray}\label{Integral}
\Lambda(\tau) &=& e^{u^{2}/2}E\biggr[ \frac{1}{\int_{T}e^{(u-\mu
_{\sigma }(t))\left[ f(t)+(u-\mu _{\sigma }(\tau ))C(t-\tau )+\mu
_{\sigma }(t)\right] +\frac{1}{2}\mu _{\sigma
}^{2}(t)}dt};\notag\\
&&~~~~~~~~~~\int_{T}e^{\sigma \left\{ f(t)+(u-\mu _{\sigma }(\tau
))C(t-\tau )+\mu _{\sigma }(t)\right\} }dt>b\biggr]
\end{eqnarray}%
Let
\begin{eqnarray}
\mathcal{E}_{b} &=&\left\{ \int_{T}e^{\sigma \left\{ f(t)+(u-\mu _{\sigma
}(t))C(t-\tau )+\mu _{\sigma }(t)\right\} }dt>b\right\} ,  \label{Event} \\
K &=&\int_{T}e^{(u-\mu _{\sigma }(t))\left[ f(t)+(u-\mu _{\sigma }(\tau
))C(t-\tau )+\mu _{\sigma }(t)\right] +\frac{1}{2}\mu _{\sigma }^{2}(t)}dt.
\label{LRI}
\end{eqnarray}%
Then, (\ref{Integral}) can be written as%
\begin{equation}
\Lambda (\tau)=e^{u^{2}/2}\int E\left[ \left. K^{-1};\mathcal{E}%
_{b}\right\vert f(\tau )=w,\partial f(\tau )=\tilde{y},\partial ^{2}f(\tau )=%
\tilde{z}\right] \times
h(w,\tilde{y},\tilde{z})dwd\tilde{y}d\tilde{z}, \label{Int}
\end{equation}%
where $h(w,\tilde{y},\tilde{z})$ is the density function of
$\left(f(\tau
),\partial f(\tau ),\partial ^{2}f(\tau)\right)$ evaluated at $(w,\tilde{y},%
\tilde{z})$.

For a given $\delta'>0$ small enough, we consider two cases for $\tau$:  first, $\{t:|t-\tau |\leq u^{-1/2+\delta ^{\prime }}\}\subset T$ and
otherwise. For the first situation, $\tau$ is ``far away'' from the
boundary of $T$, which is the important case in our analysis. For the second
situation, $\tau$ is close to the boundary. We will show that the
second situation is of less importance given that the maximum of
$\mu(t)$ is attained at the interior of $T$.

For the first situation, the analysis consists of three main parts.

\begin{description}
\item[Part 1] Conditional on $\left(\tau ,f(\tau ),\partial f(\tau ),\partial
^{2}f(\tau )\right)$, we study the event%
\begin{equation}
\mathcal{E}_{b}=\left\{ \int_{T}e^{\sigma \left\{ f(t)+(u-\mu _{\sigma
}(\tau ))C(t-\tau )+\mu _{\sigma }(t)\right\} }dt>b\right\} ,  \label{cond}
\end{equation}%
and write the occurrence of this event almost as a deterministic function of $%
f(\tau )$, $\partial f(\tau )$, and $\partial ^{2}f(\tau )$.

\item[Part 2] Conditional on $\left(\tau ,f(\tau ),\partial f(\tau ),\partial
^{2}f(\tau )\right)$, we write $K$ defined in (\ref{LRI}) as a
function of $f(\tau )$, $\partial f(\tau )$,  $\partial ^{2}f(\tau
)$ and a small correction term.

\item[Part 3] We combine the results from the first two parts and obtain an
approximation of (\ref{obj}) through the right-hand-side of (\ref{Int}).
\end{description}

All the subsequent derivations are conditional on a specific value of
$\tau$.

\subsection*{Preliminary calculations}

For $0<\varepsilon<\delta' $ sufficiently small, let%
\begin{equation}
\mathcal{L}_{Q}=\left\{\left\vert f(\tau )-u+\mu _{\sigma }(\tau
)\right\vert \leq u^{1/2+\varepsilon },|\partial f(\tau
)|<u^{1/2+\varepsilon },|\partial ^{2}f(\tau )-(u-\mu _{\sigma
}(\tau ))\mu _{02}|<u^{1/2+\varepsilon }\right\}. \label{LQ}
\end{equation}%
According to Lemma \ref{LemLocal}, we only need to consider the integral on the
set $\mathcal{L}_Q$, that is,%
\begin{eqnarray*}
&&E_{\tau }^{Q}\left[ \frac{1}{\int \exp \left\{(u-\mu _{\sigma
}(t))(f(t)+\mu _{\sigma }(t))+\frac{1}{2}\mu _{\sigma }^{2}(t)\right\} dt}%
;\int_{T}e^{\mu (t)+\sigma f(t)}dt>b,\mathcal{L}_{Q}\right] .\end{eqnarray*}
The above display equals to
$$E\left[ K^{-1};\mathcal{E}_{b},\mathcal{L}\right] ,$$
where
\begin{equation}
\mathcal{L}=\left\{|f(\tau )|\leq u^{1/2+\varepsilon },|\partial
f(\tau )|<u^{1/2+\varepsilon },|\partial ^{2}f(\tau
)|<u^{1/2+\varepsilon }\right\}, \label{local}
\end{equation}
corresponds to $\mathcal L_Q$ under the transform \eqref{equav}.
Therefore, throughout the rest of the proof, all the derivations are
on the set $\mathcal{L}$. Note that the sets $\mathcal{L}_{Q}$ and
$\mathcal{L}$ depend on $\tau $ and $u$. Since all the subsequent
derivations are for specific $\tau $ and $u$, we omit the indices of
$\tau $ and $u$ in the notations $\mathcal{L}$ and
$\mathcal{L}_{Q}$.

We first provide the Taylor expansions for $f(t)$, $C(t)$, and $\mu(t)$.
\begin{itemize}
\item Expansion of $f(t)$ given $\left(f(\tau
),\partial f(\tau ),\partial ^{2}f(\tau )\right)$. Let $t-\tau
=((t-\tau )_{1},...,(t-\tau )_{d})$. Conditional on $\left(f(\tau
),\partial f(\tau ),\partial ^{2}f(\tau )\right)$, we first expand
the random
function%
\begin{eqnarray}
f(t) &=&E\left[f(t)|f(\tau ),\partial f(\tau ),\partial ^{2}f(\tau
)\right]+g(t-\tau)
\label{expf} \\
&=&f(\tau )+\partial f(\tau )^{\top }(t-\tau )+\frac{1}{2}(t-\tau )^{\top
}\Delta f(\tau )(t-\tau )  \notag \\
&&+g_{3}(t-\tau )+R_{f}(t-\tau )+g(t-\tau ),  \notag
\end{eqnarray}%
where%
\begin{equation*}
g_{3}(t-\tau )=\frac{1}{6}\sum_{i,j,k}E\left[\partial
_{ijk}^{3}f(\tau )|f(\tau ),\partial f(\tau ),\partial ^{2} f(\tau
)\right](t-\tau )_{i}(t-\tau )_{j}(t-\tau )_{k}.
\end{equation*}%
Note that $\partial _{ijk}^{3}f(\tau )$ is independent of $(f(\tau ),\Delta
f(\tau ))$ and
\begin{equation*}
E\left[\partial _{ijk}^{3}f(\tau )|f(\tau ),\partial f(\tau
),\partial ^{2} f(\tau )\right]=-\sum_{l}\partial^{4}
_{ijkl}C(0)\partial _{l}f(\tau ).
\end{equation*}%
$g(t)$ is a mean zero Gaussian random field such that
$Eg^{2}(t)=O(|t|^{6})$ as $t\rightarrow 0$. In addition, the
distribution of $g(t)$ is independent of $\tau, f(\tau), \partial
f(\tau)$, and $\partial^2 f(\tau)$. $R_{f}(t-\tau )=O(|t-\tau
|^{4})$ is the remainder term of the Taylor expansion of
$E\left[f(t)|f(\tau ),\partial f(\tau ),\partial ^{2}f(\tau
)\right]$.

\item Expansion of $C(t)$:
\begin{equation}
C(t)=1-\frac{1}{2}t^{\top }t+C_{4}(t)+R_{C}(t),  \label{expc}
\end{equation}%
where $R_{C}(t)=O(|t|^{6})$ and%
\begin{equation*}
C_{4}(t)=\frac{1}{24}\sum_{ijkl}\partial _{ijkl}^{4}C(0)t_{i}t_{j}t_{k}t_{l}.
\end{equation*}%
\item Expansion of $\mu(t)$:
\begin{equation}
\mu _{\sigma }(t)=\mu _{\sigma }(\tau )+\partial \mu _{\sigma }(\tau )^{\top
}(t-\tau )+\frac{1}{2}(t-\tau )^{\top }\Delta \mu _{\sigma }(\tau )(t-\tau
)+R_{\mu }(t-\tau ),  \label{expmu}
\end{equation}%
where $R_{\mu }(t-\tau )=O(|t-\tau |^{3})$.
\end{itemize}

 Let $I$ be the $d\times d$
identity matrix. We define the following notations that will be constantly
used later,%
\begin{eqnarray*}
\tilde{u} &=&u-\mu _{\sigma }(\tau ),\qquad \tilde{y}=\partial f(\tau ),\qquad \tilde{\zz}%
=\Delta f(\tau ), \\
y &=&\partial f(\tau )+\partial \mu _{\sigma }(\tau ),\qquad \zz=\Delta f(\tau )+\mu
_{\sigma }(\tau )I+\Delta \mu _{\sigma }(\tau ), \\
R(t) &=&R_{f}(t)+(u-\mu _{\sigma }(\tau ))R_{C}(t)+R_{\mu }(t).
\end{eqnarray*}
As mentioned earlier, we let $z$ and $\tilde z$ be the vector version of the matrices $\zz$ and $\tilde {\zz}$.

Now, we start to carry out our three-step program.

\subsection*{Part 1}

All the derivations in this part are conditional on specific values
of $\tau$, $f(\tau )$, $\partial f(\tau )$, and $\partial ^{2}f(\tau
)$. Define
\begin{equation*}
I_{1}\triangleq \int_{T}e^{\sigma \left\{ f(t)+(u-\mu _{\sigma }(\tau
))C(t-\tau )+\mu _{\sigma }(t)\right\} }dt.
\end{equation*}%
We insert the expansions in (\ref{expf}), (\ref{expc}) and (\ref{expmu})
into the expression of $I_{1}$ and obtain that%
\begin{eqnarray}
I_{1} &=&\int_{t\in T}\exp \biggr\{\sigma \biggr[ f(\tau )+\partial
f(\tau )^{\top }(t-\tau )+\frac{1}{2}(t-\tau )^{\top }\Delta f(\tau
)(t-\tau )+g_{3}(t-\tau
)   \notag\\
&&+R_{f}(t-\tau )+g(t-\tau )  \notag \\
&&+\left(u-\mu _{\sigma }(\tau )\right)\left(1-\frac{1}{2}(t-\tau
)^{\top }(t-\tau
)+C_{4}(t-\tau )+R_{C}(t-\tau )\right)  \notag \\
&&+\mu _{\sigma }(\tau )+\partial \mu _{\sigma }(\tau )^{\top }(t-\tau )+%
\frac{1}{2}(t-\tau )^{\top }\Delta \mu _{\sigma }(\tau )(t-\tau
)+R_{\mu }(t-\tau )\biggr]\biggr\}dt.  \label{I1}
\end{eqnarray}%
We write the exponent inside the integral in a quadratic form of $(t-\tau)$ and obtain that%
\begin{eqnarray}
I_{1} &=&\exp \left\{ \sigma u+\sigma f(\tau )+\frac{\sigma }{2}y^{\top }(uI-%
\zz)^{-1}y\right\}   \notag \\
&&\int_{t\in T}\exp \left\{ -\frac{\sigma }{2}(t-(uI-\zz)^{-1}y)^{\top }(uI-%
\zz)\left( s-(uI-\zz)^{-1}y\right) \right\}   \notag \\
&&\times \exp \left\{ \sigma g_{3}\left( t\right) +\sigma (u-\mu _{\sigma
}(\tau ))C_{4}\left( t\right) +\sigma R\left( t\right) \right\} \times \exp
\left\{ \sigma g\left( t\right) \right\} dt  \label{l4}
\end{eqnarray}%
Further, consider the change of variable that $s=(uI-\zz)^{1/2}(t-\tau )$,
write the big integral in above display as a product of expectations and a
normalizing constant, and obtain that%
\begin{eqnarray}
I_{1} &=&\det (uI-\zz)^{-1/2}\exp \left\{ \sigma u+\sigma f(\tau )+\frac{%
\sigma }{2}y^{\top }(uI-\zz)^{-1}y\right\}   \notag \\
&&\times \int_{(uI-\mathbf{z})^{-\frac{1}{2}}s+\tau \in T}\exp \left\{ -%
\frac{\sigma }{2}\left( s-(uI-\zz)^{-1/2}y\right) ^{\top }\left( s-(uI-\zz%
)^{-1/2}y\right) \right\} ds  \notag \\
&&\times E\left[ \exp \left\{ \sigma g_{3}\left( (uI-\zz)^{-\frac{1}{2}%
}S\right) +\sigma (u-\mu _{\sigma }(\tau ))C_{4}\left( (uI-\zz)^{-\frac{1}{2}%
}S\right) +\sigma R\left( (uI-\zz)^{-\frac{1}{2}}S\right) \right\} \right]
\notag \\
&&\times E\left[ \exp \left\{ \sigma g\left( (uI-\zz)^{-\frac{1}{2}}\tilde{S}%
\right) \right\} \right] .  \notag
\end{eqnarray}
The two expectations in the above display are taken with respect to $S$ and
$\tilde{S}$ given the process $g(t)$. $S$ is a random variable
taking values in the set $\left\{s:(uI-\zz)^{-1/2}s+\tau \in
T\right\}$ with density proportional to
\begin{equation*}
\exp \left\{ -\frac{\sigma }{2}\left(s-(uI-\zz)^{-1/2}y\right)^{\top }\left(s-(uI-\zz)^{-1/2}y\right)%
\right\}
\end{equation*}%
and $\tilde{S}$ is a random variable taking values in the set $\left\{s:(uI-\zz)^{-1/2}s+\tau \in T\right\}$ with density proportional to%
\begin{eqnarray}
&&\exp \left\{-\frac{\sigma
}{2}\left(s-(uI-\zz)^{-1/2}y\right)^{\top
}\left(s-(uI-\zz)^{-1/2}y\right)\right\}
\notag\\
&&~~~~~\times\exp\left\{\sigma g_{3}\left((uI-\zz)^{-\frac{1}{2}}s\right)+\sigma (u-\mu
_{\sigma }(\tau ))C_{4}\left((uI-\zz)^{-\frac{1}{2}}s\right)+\sigma
R\left((uI-\zz)^{-\frac{1}{2}}s\right)\right\}.  \notag\\
\label{den}
\end{eqnarray}%
Together with the definition of $u$ that
\begin{equation*}
\left( \frac{2\pi }{\sigma }\right) ^{d/2}u^{-d/2}e^{\sigma u}=b,
\end{equation*}%
we obtain that
\begin{equation*}
I_1=\int_{T}e^{\sigma \left\{ f(t)+(u-\mu _{\sigma }(t))C(t-\tau )+\mu _{\sigma
}(t)\right\} }dt>b
\end{equation*}%
if and only if%
\begin{eqnarray}
I_{1} &=&\det (uI-\zz)^{-1/2}\exp \left\{ \sigma u+\sigma f(\tau )+\frac{%
\sigma }{2}y^{\top }(uI-\zz)^{-1}y\right\}    \notag\\
&&\times \int_{(uI-\mathbf{z})^{-\frac{1}{2}}s+\tau \in T}\exp \left\{ -%
\frac{\sigma }{2}\left( s-(uI-\zz)^{-1/2}y\right) ^{\top }\left( s-(uI-\zz%
)^{-1/2}y\right) \right\} ds  \notag\\
&&\times E\exp \left\{ \sigma g_{3}\left( (uI-\zz)^{-\frac{1}{2}}S\right)
+\sigma (u-\mu _{\sigma }(\tau ))C_{4}\left( (uI-\zz)^{-\frac{1}{2}}S\right)
+\sigma R\left( (uI-\zz)^{-\frac{1}{2}}S\right) \right\}   \notag \\
&&\times \exp \left\{ -u^{-1}\xi _{u}\right\}   \notag\\
&>&\left( \frac{2\pi }{\sigma }\right) ^{d/2}u^{-d/2}e^{\sigma u},\notag\\
\label{ineq}
\end{eqnarray}%
where
\begin{equation}\label{xi}
\xi _{u}=-u\log \left\{ E\exp \left[ \sigma g\left((uI-\zz)^{-\frac{1}{2}}\tilde{S}%
\right)\right] \right\}.
\end{equation}%
We take log on both sides and plug in the result of Lemma \ref{LemExp} that handles the big expectation term in \eqref{ineq}.
Then, the inequality \eqref{ineq} is equivalent to
\begin{eqnarray}
A \triangleq\sigma f(\tau )+\frac{\sigma }{2}y^{\top }(uI-\zz)^{-1}y-\frac{1}{2}\log
\det(I-u^{-1}\zz)+\sigma B+o(u^{-1})   >u^{-1}\xi _{u},  \notag\\
\label{A}
\end{eqnarray}%
where%
\begin{equation}\label{B}
B\triangleq-\frac{1}{8u}(u^{-1}Y+\mathbf{1}/\sigma )^{\top }\mu _{22}(u^{-1}Y+\mathbf{%
1}/\sigma )+\frac{\mathbf{1}^{\top }\mu _{22}\mathbf{1}}{8\sigma ^{2}u}+%
\frac{1}{8\sigma ^{2}u}\sum_{i}\partial _{iiii}^{4}C(0),
\end{equation}%
and
\begin{eqnarray*}
Y =\left\{y_{i}^{2},i=1,...,d;2y_{i}y_{j},1\leq i<j\leq d\right\}, \qquad
\mathbf{1} =\mathbf{(}\underset{d}{\underbrace{1,...,1}},\underset{d(d-1)/2%
}{\underbrace{0,...,0}}\mathbf{)}^{\top }.
\end{eqnarray*}%
Roughly speaking, according to Lemma \ref{LemRemainder}, the event $\mathcal{%
E}_{b}$ is the same as the event $\left\{A>O_{p}(u^{-3/2+3\delta
})\right\}$.

\subsection*{Part 2}

Similar to Part 1, all the derivations in this part are conditional
on $\left(\tau ,f(\tau ),\partial f(\tau ),\partial ^{2}f(\tau
)\right)$. We now proceed to the second part of the proof. More
precisely, we simplify the term $K$ defined as in (\ref{LRI}) and
write it as a deterministic function of $\left(f(\tau),\partial
f(\tau ),\partial ^{2}f(\tau )\right)$ with a small correction term.
For $\varepsilon<\delta <\delta'$ with all of them sufficiently
small, we let $\lambda _{u}=u^{-1/2+\delta }$. We first split the
integral into two parts, that is,
\begin{eqnarray*}
K &=&\int_{T}e^{(u-\mu _{\sigma }(t))\left[ f(t)+(u-\mu _{\sigma
}(\tau
))C(t-\tau )+\mu _{\sigma }(t)\right] +\frac{1}{2}\mu _{\sigma }^{2}(t)}dt \\
&=&\int_{|t-\tau |<\lambda _{u}}...+\int_{|t-\tau |>\lambda _{u}}... \\
&=&I_{2}+I_{3}.
\end{eqnarray*}%
For the leading term, note that $|t-\tau|\leq \lambda_n = u^{-1/2 + \delta}$. We insert the Taylor expansion of $\mu_\sigma(t)$
\begin{eqnarray*}
I_{2} &=&\int_{|t-\tau |<\lambda _{u}}e^{(u-\mu _{\sigma }(t))\left[
f(t)+(u-\mu _{\sigma }(\tau ))C(t-\tau )+\mu _{\sigma }(t)\right]
+\frac{1}{2}\mu _{\sigma }^{2}(t)}dt \\
&=&(1+o(1))e^{u^{2}-u\mu _{\sigma }(\tau )+\frac{1}{2}\mu^{2} _{\sigma }(\tau
)} \int_{|t-\tau |<\lambda _{u}}e^{(u-\mu _{\sigma }(t))\left[ f(t)+(u-\mu
_{\sigma }(\tau ))C(t-\tau )-u+\mu _{\sigma }(\tau )\right] }dt.
\end{eqnarray*}%
Let $\zeta _{u}=O(u^{-1/2+\delta })$. In what follows, we insert the
expansions in (\ref{expf}), (\ref{expc}), and (\ref{expmu}), write the exponent as a quadratic function of $t-\tau$, and obtain that on the set $\mathcal{L}$
\begin{eqnarray}
&&\int_{|t-\tau |<\lambda _{u}}e^{(u-\mu _{\sigma }(t))\left[
f(t)+(u-\mu _{\sigma }(\tau ))C(t-\tau )-u+\mu _{\sigma }(\tau
)\right] }dt
\notag\\
&=&\int_{|t-\tau |<\lambda _{u}}\exp \biggr\{(\tilde{u}+\zeta _{u})
\biggr[ f(\tau )+(t-\tau )^{\top }\tilde{y}-\frac{1}{2}(t-\tau
)^{\top }(\tilde{u}I-\tilde{\zz})(t-\tau )\notag \\
&&+g_{3}(t-\tau)+\tilde{u}C_{4}(t-\tau)+g(t-\tau)+O\left(u^{-3/2+3\delta }\right)\biggr] \biggr\}dt  \notag \\
&=&(1+o(1))\exp \left\{(\tilde{u}+\zeta _{u})\left(f(\tau
)+\frac{1}{2}\tilde{y}^{\top
}(\tilde{u}I-\tilde{\zz})^{-1}\tilde{y}\right)\right\}  \notag \\
&&\times \int_{|t-\tau |<\lambda _{u}}\exp \biggr\{(\tilde{u}+\zeta _{u})\biggr[-\frac{1%
}{2}\left(t-\tau -(\tilde{u}I-\tilde{\zz})^{-1}\tilde{y}\right)^{\top }\left(\tilde{u}%
I-\tilde{\zz}\right)\left(t-\tau -(\tilde{u}I-\tilde{\zz})^{-1}\tilde{y}\right)  \notag \\
&&+g_{3}(t-\tau)+\tilde{u}C_{4}(t-\tau)+g(t-\tau)+O\left(u^{-3/2+3\delta
}\right)\biggr]\biggr\}dt  \notag\\
\label{LRD}
\end{eqnarray}%
We consider the change of variable that $s=(\tilde{u}+\zeta_u)^{1/2}(\tilde{u}I-\tilde{\zz}%
)^{1/2}(t-\tau )$ and obtain that (\ref{LRD}) equals to
\begin{eqnarray}
&&(1+o(1))\det
(\tilde{u}I-\tilde{\mathbf{z}})^{-1/2}\tilde{u}^{-d/2}\exp
\left\{(\tilde{u}+\zeta
_{u})\left(f(\tau )+\frac{1}{2}\tilde{y}^{\top }(\tilde{u}I-\tilde{\zz})^{-1}\tilde{%
y}\right)\right\}  \notag \\
&&\times
\int_{|\tilde{u}^{-1/2}(\tilde{u}I-\tilde{\mathbf{z}})^{-1/2}s|<\lambda
_{u}}\exp \left\{-\frac{1}{2}\left| s-(\tilde{u}+\zeta _{u})^{1/2}(\tilde{u}I-%
\tilde{\mathbf{z}})^{-1/2}\tilde{y}\right| ^{2}\right\}  \notag \\
&&\times \exp \biggr\{(\tilde{u}+\zeta _{u})g_{3}\left((\tilde{u}+\zeta _{u})^{-1/2}(%
\tilde{u}I-\tilde{\zz})^{-1/2}s\right)+\tilde{u}(\tilde{u}+\zeta _{u})C_{4}\left((\tilde{u%
}+\zeta _{u})^{-1/2}(\tilde{u}I-\tilde{\zz})^{-1/2}s\right)  \notag \\
&&+(\tilde{u}+\zeta _{u})g\left((\tilde{u}+\zeta _{u})^{-1/2}(\tilde{u}I-\tilde{\zz}%
)^{-1/2}s\right)\biggr\}ds. \notag\\ \label{inter}
\end{eqnarray}%
Note that the variation of the last term in \eqref{inter}, the $g(t)$ term, is tiny. Then, we first focus on the leading term.
Similar to the proof of Lemma \ref{LemExp}, we can write the integral (without the $g(t)$ term) as
\begin{eqnarray*}
&&\int_{|\tilde{u}^{-1/2}(\tilde{u}I-\tilde{\mathbf{z}})^{-1/2}s|<\lambda
_{u}}\exp
\left\{ -\frac{1}{2}\left\vert s-(\tilde{u}+\zeta _{u})^{1/2}(\tilde{u}I-%
\tilde{\mathbf{z}})^{-1/2}\tilde{y}\right\vert ^{2}\right\} \\
&&~~~~\times \exp \biggr\{ (\tilde{u}+\zeta
_{u})g_{3}\left((\tilde{u}+\zeta
_{u})^{-1/2}(\tilde{u}I-\tilde{\zz})^{-1/2}s\right)\\
&&~~~~~~~~~~~~~~~~+\tilde{u}(\tilde{u}+\zeta
_{u})C_{4}\left((\tilde{u}+\zeta _{u})^{-1/2}(\tilde{u}I-\tilde{\zz}%
)^{-1/2}s\right)\biggr\} ds \\
&=&(1+o(1))e^{-\frac{\tilde{u}^{-2}}{8}\tilde{Y}^{\top }\mu _{22}\tilde{Y}%
}\int_{|\tilde{u}^{-1/2}(\tilde{u}I-\tilde{\mathbf{z}})^{-1/2}s|<\lambda _{u}}e^{-%
\frac{1}{2}\left\vert s-(\tilde{u}+\zeta _{u})^{1/2}(\tilde{u}I-\tilde{\mathbf{z}}%
)^{-1/2}\tilde{y}\right\vert ^{2}}ds,
\end{eqnarray*}%
where
\begin{equation*}
\tilde{Y}=\left\{\tilde{y}_{i}^{2},i=1,...,d;
2\tilde{y}_{i}\tilde{y}_{j},1\leq i<j\leq d\right\},
\end{equation*}%
is arranged in the same order as that of the elements in $Y$.
Therefore, (\ref{LRD}) equals
\begin{eqnarray}
 &&(1+o(1))\det (\tilde{u}I-\tilde{\zz})^{-1/2}\tilde{u}%
^{-d/2}\exp \left\{(\tilde{u}+\zeta _{u})\left(f(\tau )+\frac{1}{2}\tilde{y}^{\top }(%
\tilde{u}I-\tilde{\zz})^{-1}\tilde{y}\right)-\frac{\tilde{u}^{-2}}{8}\tilde{Y}^{\top
}\mu _{22}\tilde{Y}\right\}  \notag \\
&&\times
\int_{|\tilde{u}^{-1/2}(\tilde{u}I-\tilde{\mathbf{z}})^{-1/2}s|<\lambda
_{u}}\exp \left\{-\frac{1}{2}\left\vert s-(\tilde{u}+\zeta _{u})^{1/2}(\tilde{u}I-%
\tilde{\mathbf{z}})^{-1/2}\tilde{y}\right\vert ^{2}\right\}ds  \notag \\
&&\times E\left[\exp \left\{(\tilde{u}+\zeta_{u})g\left((\tilde{u}+\zeta _{u})^{-1/2}(\tilde{u}I-\tilde{\zz}%
)^{-1/2}S^{\prime }\right)\right\}\right],  \notag\\\label{cont}
\end{eqnarray}
where $S^{\prime }$ is a random variable taking values on the set $\left\{s:|%
\tilde{u}^{-1/2}(\tilde{u}I-\tilde{\zz})^{-1/2}s|<\lambda
_{u}\right\}$ with density proportional to
\begin{eqnarray*}
&&\exp \left\{ -\frac{1}{2}\left\vert s-(\tilde{u}+\zeta _{u})^{1/2}(\tilde{u%
}I-\tilde{\zz})^{-1/2}\tilde{y}\right\vert ^{2}\right\} \\
&&\times \exp \left\{ (\tilde{u}+\zeta
_{u})g_{3}\left((\tilde{u}+\zeta
_{u})^{-1/2}(\tilde{u}I-\tilde{\zz})^{-1/2}s\right)+\tilde{u}(\tilde{u}+\zeta
_{u})C_{4}\left((\tilde{u}+\zeta _{u})^{-1/2}(\tilde{u}I-\tilde{\zz}%
)^{-1/2}s\right)\right\}.
\end{eqnarray*}%
We use $\kappa$ to denote the last two terms of \eqref{cont}
\begin{eqnarray}
\kappa
&=&\int_{|\tilde{u}^{-1/2}(\tilde{u}I-\tilde{\mathbf{z}})^{-1/2}s|<\lambda
_{u}}\exp
\left\{-\frac{1}{2}\Big\vert s-(\tilde{u}+\zeta _{u})^{1/2}(\tilde{u}I-\tilde{\zz}%
)^{-1/2}\tilde{y}\Big\vert ^{2}\right\}ds  \notag\\
&&\times E\left[\exp \left\{(\tilde{u}+\zeta _{u})g\left((\tilde{u}+\zeta _{u})^{-1/2}(\tilde{u}I-\tilde{\zz}%
)^{-1/2}S^{\prime }\right)\right\} \right] \notag\\
 \label{kappa}
\end{eqnarray}%
It is helpful it keep in mind that $\kappa = (2\pi)^{d/2} + o(1)$.
Now, we continue the calculations in \eqref{cont} and write (\ref{LRD}) in the form of $A$ defined  as in \eqref{A} to facilitate the change of variable later. Then, on the set $\mathcal L$, we plug in the form of $A$ and $B$ defined in Part 1 of the proof and obtain that \eqref{LRD} equals
\begin{eqnarray}
(\ref{LRD})
&=&(1+o(1))\kappa \det (\tilde{u}I-\tilde{\zz})^{-1/2}u^{-d/2}\exp \left\{ (%
\tilde{u}+\zeta _{u})\left( f(\tau )+\frac{1}{2}\tilde{y}^{\top}(\tilde{u}I-%
\tilde{z})^{-1}\tilde{y}-\frac{\tilde{u}^{-2}}{8}\tilde{Y}^{\top}\mu _{22}%
\tilde{Y}\right) \right\}   \notag \\
&=&(1+o(1))\kappa u^{-d}\exp \biggr\{\frac{\tilde{u}}{\sigma }A-\tilde{u}B+%
\frac{\tilde{u}}{2\sigma }\log \det (I-u^{-1}\zz)  \notag \\
&&+\frac{\tilde{u}}{2}\left( \tilde{y}^{\top }(\tilde{u}I-\tilde{\zz})^{-1}%
\tilde{y}-y^{\top }(uI-\zz)^{-1}y\right) +u^{-1/2+\delta }O\left(
|y|^{2}+f(\tau )\right) \biggr\}  \notag \\
&=&(1+o(1))\kappa u^{-d}\exp \biggr\{\frac{\tilde{u}}{\sigma }A-\tilde{u}B-%
\frac{1}{2\sigma }Tr(\tilde{\zz}+\mu _{\sigma }(\tau )I+\Delta \mu
_{\sigma }(\tau )) \notag\\
&&-\tilde{y}^{\top }\partial \mu _{\sigma }(\tau )-\frac{1}{2}|\partial \mu
_{\sigma }(\tau )|^{2}+u^{-1/2+\delta }O\left( |y|^{2}+|z|^{2}+f(\tau
)\right) \biggr\},  \notag\\
  \label{1}
\end{eqnarray}
where $Tr(\mathbf{z})$ is the trace of matrix $\mathbf{z}$. The last step in the above display is thanks to Lemma \ref{LemDet} and the fact that $y= \tilde y + \partial \mu_\sigma(\tau)$. We insert the result of \eqref{1} to the definition of $I_{2}$ and obtain that
\begin{eqnarray*}
I_{2} &=&(1+o(1))\kappa u^{-d}e^{u^{2}-u\mu _{\sigma }(\tau
)+\frac{1}{2}\mu^{2}
_{\sigma }(\tau )} \\
&&\times \exp \biggr\{\frac{\tilde{u}}{\sigma }A-\tilde{u}B-\frac{1}{2\sigma }%
Tr(\tilde{\zz}+\mu _{\sigma }(\tau )I+\Delta \mu _{\sigma }(\tau ))-%
\tilde{y}^{\top }\partial \mu _{\sigma }(\tau )-\frac{1}{2}|\partial \mu
_{\sigma }(\tau )|^{2} \\
&&+u^{-1/2+\delta }O\left(|y|^{2}+|z|^{2}+f(\tau )\right)\biggr\}.
\end{eqnarray*}%
Thanks to Lemma \ref{LemI4}, $I_3$ is of a much smaller order than $I_2$ and we obtain that
\begin{eqnarray}
K&=&I_{2}+I_{3}\notag\\
 &=&(1+o(1))\left(\kappa +O\left(e^{-\delta ^{\ast
}u^{1+2\delta }}e^{u\sup |g(t)|}\right)\right)u^{-d}e^{u^{2}-u\mu
_{\sigma }(\tau )+\frac{1}{2}\mu^{2} _{\sigma
}(\tau )} \notag\\
&&\times \exp \biggr\{\frac{\tilde{u}}{\sigma }A-\tilde{u}B-\frac{1}{2\sigma }%
Tr(\tilde{\zz}+\mu _{\sigma }(\tau )I+\Delta \mu _{\sigma }(\tau ))-%
\tilde{y}^{\top }\partial \mu _{\sigma }(\tau )-\frac{1}{2}|\partial \mu
_{\sigma }(\tau )|^{2} \notag\\
&&+u^{-1/2+\delta }O\left(|y|^{2}+|z|^{2}+f(\tau )\right)\biggr\}. \label{KK}
\end{eqnarray}

\subsection*{Part 3}

Use the notations that $f(\tau )=w$, $\partial f(\tau )=\tilde{y}$, and $%
\partial ^{2}f(\tau )=\tilde{z}$. We now put together the results from Part
1 and Part 2 and obtain an approximation of $\Lambda(\tau)$ defined as in \eqref{lambda}. Note that
\begin{eqnarray*}
\Lambda ^{\ast }(\tau ) &\triangleq&E_{\tau }^{Q}\left[ \frac{1}{\int_{T}e^{-\frac{1}{%
2}\left( f(t)-u+\mu _{\sigma }(t)\right) ^{2}+\frac{1}{2}f^{2}(t)}dt}%
;\int_{T}e^{\mu (t)+\sigma f(t)}dt>b,\mathcal{L}_{Q}\right]  \\
&=&e^{u^{2}/2}E\left[ \frac{1}{I_{2}+I_{3}};~u\cdot A>\xi _{u},\mathcal{L}%
\right]  \\
&=&e^{u^{2}/2}\int_{\mathcal{L}}E\left[ \left. \frac{1}{I_{2}+I_{3}};~u\cdot
A>\xi _{u}\right\vert f(\tau )=w,\partial f(\tau )=\tilde{y},\partial
^{2}f(\tau )=\tilde{z}\right] h(w,\tilde{y},\tilde{z})dwd\tilde{y}d\tilde{z}
\\
\end{eqnarray*}%
Plugging in \eqref{KK}, we have that
\begin{eqnarray*}%
\Lambda ^{\ast }(\tau )&=&(1+o(1))u^{d}\exp \left\{ -u^{2}/2+u\mu _{\sigma }(\tau )-\frac{1}{2}\mu
_{\sigma }^{2}(\tau )\right\}  \\
&&\times \int_{\mathcal{L}}\gamma _{u}(u\cdot A)\times \exp \biggr\{-\frac{%
\tilde{u}}{\sigma }A+\tilde{u}B+\frac{1}{2\sigma }Tr(\tilde{\zz}+\mu _{\sigma
}(\tau )I+\Delta \mu _{\sigma }(\tau )) \\
&&+\tilde{y}^{\top }\partial \mu _{\sigma }(\tau )+\frac{1}{2}|\partial \mu
_{\sigma }(\tau )|^{2}+u^{-1/2+\delta }O\left( |\tilde{y}|^{2}+|\tilde{z}%
|^{2}+w\right) \biggr\}h(w,\tilde{y},\tilde{z})dwd\tilde{y}d\tilde{z},
\end{eqnarray*}%
where $h$ is the density function of $\left(f(\tau ),\partial f(\tau
),\partial ^{2}f(\tau )\right)$ under the measure $P$. The above display also uses the fact that both $A$ and $B$ are functions of $(w,\tilde y, \tilde z)$ and therefore can be pulled outside of the conditional expectation. The notation
\begin{equation}
\gamma _{u}(x)=E\left[ \left. \frac{1}{\kappa +O(e^{-\delta ^{\ast
}u^{1+2\delta }}e^{ u\sup|g(t)|})};x>\xi _{u}\right\vert w,\tilde{y},\tilde{z}%
\right] ,  \label{gammaind}
\end{equation}%
where the expectation is taken with respect to the process $g(t)$. Note that $\mathbf 1 ^\top \tilde z = Tr(\tilde {\zz})$. Plugging in the analytic forms of $h$ (Lemma \ref{density}) and $B$ as in \eqref{B} and moving all the constants out of the integral, we obtain that
\begin{eqnarray*}
&&\Lambda^*(\tau) \\&=&(1+o(1))u^{d}e^{-u^{2}/2+u\mu _{\sigma }(\tau
)-\frac{1}{2}\mu^{2}_{\sigma }(\tau )+\frac{1}{2\sigma }Tr(\mu
_{\sigma }(\tau )I+\Delta \mu _{\sigma }(\tau
))+\frac{1}{2}|\partial \mu _{\sigma }(\tau
)|^{2}+\frac{\mathbf{1}^{\top }\mu _{22}\mathbf{1}}{8\sigma
^{2}}+\frac{1}{8\sigma ^{2}}\sum_{i}\partial
_{iiii}^{4}C(0)} \\
&&\times \frac{|\Gamma |^{-1/2}}{(2\pi )^{\frac{(d+1)(d+2)}{4}}}\int_{%
\mathcal{L}}\gamma _{u}(u\cdot A)\exp \left\{ -\frac{\tilde{u}}{\sigma }A+%
\frac{1}{2\sigma }\mathbf{1}^{\top }\tilde{z}+\tilde{y}^{\top
}\partial \mu _{\sigma
}(\tau )\right\}  \\
&&\times \exp \left\{ -\frac{1}{8}(u^{-1}Y+\mathbf 1/\sigma )^{\top }\mu
_{22}(u^{-1}Y+\mathbf 1/\sigma )-\frac{1}{2}\left[ \tilde{y}^{\top }\tilde{y}+\frac{%
(w-\mu _{20}\mu _{22}^{-1}\tilde{z})^{2}}{1-\mu _{20}\mu _{22}^{-1}\mu _{02}}%
+\tilde{z}^{\top }\mu _{22}^{-1}\tilde{z}\right] \right\}  \\
&&\times \exp \left\{u^{-1/2+\delta }O\left(|\tilde y|^{2}+|\tilde z|^{2}+w\right)\right\}%
dwd\tilde{y}d\tilde{z}.
\end{eqnarray*}
We insert%
\begin{equation*}
-\frac{1}{8}(u^{-1}Y+\mathbf 1/\sigma )^{\top }\mu _{22}(u^{-1}Y+\mathbf 1/\sigma )=-\frac{1
}{8\sigma ^{2}}\mathbf 1^{\top }\mu
_{22}\mathbf 1+u^{-1/2+\delta}O(|\tilde{y}|^{2}+1)
\end{equation*}%
into the above display. With some elementary calculation, we obtain that
\begin{eqnarray*}
&&\Lambda^*(\tau)
\\
 &=&(1+o(1))u^{d}e^{-u^{2}/2+u\mu _{\sigma }(\tau )-\frac{1}{2}\mu^{2}_{\sigma
}(\tau )+\frac{1}{2\sigma }Tr(\mu _{\sigma }(\tau )I+\Delta \mu
_{\sigma }(\tau
))+|\partial \mu _{\sigma }(\tau )|^{2}+\frac{\mathbf{1}^{\top }\mu _{22}%
\mathbf{1}}{8\sigma ^{2}}+\frac{1}{8\sigma ^{2}}\sum_{i}\partial
_{iiii}^{4}C(0)} \\
&&\frac{|\Gamma |^{-1/2}}{(2\pi )^{\frac{(d+1)(d+2)}{4}}}\int_{\mathcal{L}%
}\gamma _{u}(u\cdot A)\exp \left\{-\frac{\tilde{u}}{\sigma }A+u^{-1/2+\delta }O\left(|\tilde y|^{2}+|\tilde z|^{2}+w+1\right)\right\} \\
&&\times \exp \left\{ -\frac{1}{2}\left[ \left|\tilde{y}-\partial
\mu _{\sigma }(\tau )\right|^{2}+\frac{(w-\mu _{20}\mu
_{22}^{-1}\tilde{z})^{2}}{1-\mu _{20}\mu _{22}^{-1}\mu
_{02}}+\left|\mu _{22}^{-1/2}\tilde{z}-\mu
_{22}^{1/2}\frac{\mathbf{1}}{2\sigma} \right|^{2}\right] \right\}
dwd\tilde{y}d\tilde{z}.
\end{eqnarray*}%
Furthermore, on the set $\mathcal L$, according to the definition of $A$ in (\ref{A}) and
\begin{equation*}
w=A/\sigma +O(u^{-1/2+\varepsilon }|\tilde{y}|)+o(1),
\end{equation*}%
we obtain that
\begin{eqnarray*}
&&\Lambda^*(\tau)\\ &=&(1+o(1))u^{d}e^{-u^{2}/2+u\mu _{\sigma }(\tau
)-\frac{1}{2}\mu^{2}_{\sigma }(\tau )+\frac{1}{2\sigma }Tr(\mu
_{\sigma }(\tau )I+\Delta \mu
_{\sigma }(\tau ))+|\partial \mu _{\sigma }(\tau )|^{2}+\frac{\mathbf{1}%
^{\top }\mu _{22}\mathbf{1}}{8\sigma ^{2}}+\frac{1}{8\sigma ^{2}}%
\sum_{i}\partial _{iiii}^{4}C(0)} \\
&&\frac{|\Gamma |^{-1/2}}{(2\pi )^{\frac{(d+1)(d+2)}{4}}}\int_{\mathcal{L}%
}\gamma _{u}(u\cdot A)\exp \left\{-\frac{\tilde{u}}{\sigma }A+u^{-1/2+\delta }O\left(|\tilde y|^{2}+|\tilde z|^{2}+A\right)\right\} \\
&&\times \exp \biggr\{ -\frac{1}{2}\biggr[ \left|\tilde{y}-\partial
\mu _{\sigma }(\tau )\right|^{2}+\frac{\left(A/\sigma
+O(u^{-1/2+\varepsilon }|\tilde{y}|)+o(1)-\mu _{20}\mu
_{22}^{-1}\tilde{z}\right)^{2}}{1-\mu _{20}\mu _{22}^{-1}\mu
_{02}}\notag\\
&&~~~~~~~~~~~~~ +\left|\mu
_{22}^{-1/2}\tilde{z}-\mu _{22}^{1/2}\frac{\mathbf{1}}{2\sigma} \right|^{2}\biggr] \biggr\} dwd%
\tilde{y}d\tilde{z} \\
&=&(1+o(1))u^{d-1}e^{-\frac 1 2 (u-\mu _{\sigma
}(\tau ))^{2}+\frac{1}{2\sigma }Tr(\mu _{\sigma }(\tau )I+\Delta \mu _{\sigma
}(\tau ))+|\partial \mu _{\sigma }(\tau )|^{2}+\frac{\mathbf{1}^{\top }\mu
_{22}\mathbf{1}}{8\sigma ^{2}}+\frac{1}{8\sigma ^{2}}\sum_{i}\partial
_{iiii}^{4}C(0)} \\
&&\frac{|\Gamma |^{-1/2}}{(2\pi )^{\frac{(d+1)(d+2)}{4}}}\int_{\mathcal{L}%
}\gamma _{u}\left((\sigma +o(1))\tilde A\right)\times \exp\left\{-\tilde A+u^{-1/2+\delta }O\left(|\tilde y|^{2}+|\tilde z|^{2}+ A\right)\right\} \\
&&\times \exp \biggr\{ -\frac{1}{2}\biggr[ \left|\tilde{y}-\partial
\mu _{\sigma }(\tau )\right|^{2}+\frac{\left(\tilde
A/u+O(u^{-1/2+\varepsilon }|\tilde{y}|)+o(1)-\mu _{20}\mu
_{22}^{-1}\tilde{z}\right)^{2}}{1-\mu _{20}\mu
_{22}^{-1}\mu _{02}}\notag\\
&&~~~~~~~~~~~~~+\left|\mu _{22}^{-1/2}\tilde{z}-\mu
_{22}^{1/2}\frac{\mathbf{1}}{2\sigma} \right|^{2}\biggr] \biggr\}
d\tilde Ad\tilde{y}d\tilde{z}.
\end{eqnarray*}%
The last step changes the integral from ``$dwd\tilde y d \tilde z$'' to ``$d\tilde A d\tilde y d \tilde z$'',
where $\tilde A = \tilde u A/\sigma$.
Thanks to the Borel-TIS inequality (Lemma \ref{LemBorel}), Lemma \ref{LemRemainder} and the definition of $\kappa $ in (\ref%
{kappa}), for $x>0$, $\gamma _{u}(x)$ is bounded and as $b\rightarrow \infty
$,%
\begin{equation*}
\gamma _{u}(x)=E\left[ \frac{1}{\kappa +O(e^{-\delta ^{\ast
}u^{1+2\delta }}e^{u\sup |g(t)|})};x>\xi _{u}\right] \rightarrow
(2\pi )^{-d/2}.
\end{equation*}%
Note that on the set $\mathcal{L}$, $\tilde  A>-u^{3/2+\varepsilon }$. By Lemma \ref{LemGamma}, for $-u^{3/2+\varepsilon }<x<0$, we have that
\begin{equation*}
\gamma _{u}(x)\leq e^{u^{\delta ^{\ast }}x}.
\end{equation*}%
Therefore,%
\begin{eqnarray}
&&\gamma _{u}(x)I(x \neq 0,x>-u^{3/2+\varepsilon })  \notag \\
&&~~~~~~~~~~~~~=I(x>0)\left((2\pi )^{-d/2}+o(1)\right)+I(-u^{3/2+\varepsilon
}<x<0)O\left(e^{u^{\delta ^{\ast }}x}\right). \label{gamma}
\end{eqnarray}%
The above approximation of $\gamma _{u}(x)$ and the dominated
convergence theorem (applied to the region where $A>0$) imply that
\begin{equation*}
\int_{-u^{3/2+\varepsilon }}^{\infty }\gamma _{u}(A)\times e^{-A}~dA= \frac{1+o(1)}{(2\pi)^{d/2}}\int_0^\infty e^{-A}dA=
\frac{1+o(1)}{(2\pi )^{d/2}}.
\end{equation*}%
Then, we continue the calculation of $\Lambda^*(\tau)$ and obtain, by the dominated convergence theorem and the above results, that
\begin{eqnarray}\label{asy}\\
\Lambda (\tau )
&=&(1+o(1))\Lambda ^{\ast }(\tau )  \notag \\
&=&(1+o(1))u^{d-1}e^{-\frac{1}{2}(u-\mu _{\sigma }(\tau ))^{2}+\frac{1}{%
2\sigma }Tr(\mu _{\sigma }(\tau )I+\Delta \mu _{\sigma }(\tau ))+|\partial
\mu _{\sigma }(\tau )|^{2}+\frac{\mathbf{1}^{\top }\mu _{22}\mathbf{1}}{%
8\sigma ^{2}}+\frac{1}{8\sigma ^{2}}\sum_{i}\partial _{iiii}^{4}C(0)}  \notag
\\
&&\frac{|\Gamma |^{-1/2}}{(2\pi )^{\frac{(d+1)(d+2)}{4}}}\int \frac{I(\tilde{%
A}>0)}{(2\pi )^{d/2}}e^{-\tilde{A}}  \notag \\
&&\exp \biggr\{-\frac{1}{2}\biggr[|\tilde{y}-\partial \mu _{\sigma }(\tau
)|^{2}+\frac{\left\vert \mu _{20}\mu _{22}^{-1}\tilde{z}\right\vert ^{2}}{%
1-\mu _{20}\mu _{22}^{-1}\mu _{02}}
+\left\vert \mu _{22}^{-1/2}\tilde{z}-\mu _{22}^{1/2}\frac{\mathbf{1}}{%
2\sigma }\right\vert ^{2}\biggr]\biggr\}d\tilde{A}d\tilde{y}d\tilde{z}
\notag
\end{eqnarray}
The integrand factorizes. Then, we integrate out $d\tilde A$ and $d\tilde y$ and obtain that
\begin{eqnarray}
\Lambda (\tau)&=&
(1+o(1))u^{d-1}e^{-\frac{1}{2}(u-\mu _{\sigma }(\tau ))^{2}+\frac{1}{%
2\sigma }Tr(\mu _{\sigma }(\tau )I+\Delta \mu _{\sigma }(\tau ))+|\partial
\mu _{\sigma }(\tau )|^{2}+\frac{\mathbf{1}^{\top }\mu _{22}\mathbf{1}}{%
8\sigma ^{2}}+\frac{1}{8\sigma ^{2}}\sum_{i}\partial _{iiii}^{4}C(0)}  \notag
\\
&&\frac{|\Gamma |^{-1/2}}{(2\pi )^{\frac{(d+1)(d+2)}{4}}}\int_{\tilde{z}\in
R^{d(d+1)/2}}\exp \left\{ -\frac{1}{2}\left[ \frac{|\mu _{20}\mu _{22}^{-1}%
\tilde{z}|^{2}}{1-\mu _{20}\mu _{22}^{-1}\mu _{02}}+\left\vert \mu
_{22}^{-1/2}\tilde{z}-\mu _{22}^{1/2}\frac{\mathbf{1}}{2\sigma }\right\vert
^{2}\right] \right\} d\tilde{z}.  \notag
\end{eqnarray}
Thus, we conclude the situation when $\tau $ is at least $%
u^{-1/2+\delta ^{\prime }}$ away from the boundary.

\subsection*{The case in which $\protect\tau$ is close to the boundary of $T$}

For the case in which $\tau$ is within $u^{-1/2 + \delta'}$ distance from the boundary of $T$,
Lemma \ref{LemBoundary} establishes that the contribution of the boundary case is ignorable. An intuitive interpretation of Lemma \ref{LemBoundary} is that the important region of the integral $\int e^{f(t)}dt$ might be cut off by the boundary of $T$. Therefore, in cases that $\tau$ is too close to the boundary, the tail $\int e^{f(t)} dt$ is not heavier than that of the interior case.

\subsection*{Summary}

Note that $\mu_\sigma(\tau)$ does not achieve the its maximum at the boundary of $T$. Together with (\ref{asy}) and Lemma \ref{LemBoundary}, we obtain that
\begin{eqnarray*}
P\left( \int_{T}e^{\mu (t)+\sigma f(t)}dt>b\right)
&=&E^{Q}\left[ \frac{dP}{dQ};\int_{T}e^{\mu (t)+\sigma f(t)}dt>b\right]  \\
&=&\int_{T} \Lambda(\tau) d\tau  =(1+o(1))\int_{T} \Lambda^*(\tau) d\tau  \\
&=&(1+o(1)) u^{d-1} \int_T H(\mu,\sigma,\tau) \exp \left\{
-\frac{1}{2}(u-\mu _{\sigma }(\tau ))^{2}\right\} d\tau
\end{eqnarray*}%
where
\begin{eqnarray*}
&&H(\mu,\sigma,\tau)\\ &=&\frac{|\Gamma |^{-1/2}}{(2\pi
)^{\frac{(d+1)(d+2)}{4}}}\exp \left\{\frac{1}{2\sigma }Tr(\mu
_{\sigma }(\tau )I+\Delta \mu _{\sigma }(\tau ))+|\partial \mu
_{\sigma }(\tau )|^{2}+ \frac{\mathbf{1}^{\top }\mu
_{22}\mathbf{1}}{8\sigma ^{2}}+\frac{1}{8\sigma
^{2}}\sum_{i}\partial _{iiii}^{4}C(0)\right\}  \\
&&\times \int_{\tilde{z}\in R^{d(d+1)/2}} \exp \left\{ -\frac{1}{2}\left[ \frac{|\mu _{20}\mu _{22}^{-1}%
\tilde{z}|^{2}}{1-\mu _{20}\mu _{22}^{-1}\mu _{02}}+\left|\mu _{22}^{-1/2}\tilde{z%
}-\mu _{22}^{1/2}\frac{\mathbf{1}}{2\sigma} \right|^{2}\right]
\right\} d\tilde{z}.
\end{eqnarray*}

\section{Lemmas}\label{SecLem}

In this section, we state all the lemmas used in
the previous section. To facilitate reading, we move several lengthy proofs to the supplemental article \cite{LXsuppA}.

The first lemma is known as Borel-TIS lemma, which was proved
independently by \cite{Bor75, CIS}.

\begin{lemma}[Borel-TIS]\label{LemBorel}
Let $f(t)$, $t\in \mathcal U$, $\mathcal U$ is a parameter set,
be mean zero Gaussian random field. $f$ is almost surely
bounded on $\mathcal U$. Then,
$$E(\sup_{\mathcal U}f(t) )<\infty,$$ and
\[
P(\max_{t\in \mathcal{U}}f\left(  t\right)
-E[\max_{t\in\mathcal{U}}f\left( t\right)  ]\geq b)\leq e^{
-\frac{b^{2}}{2\sigma_{\mathcal{U}}^{2}}}  ,
\]
where
\[
\sigma_{\mathcal{U}}^{2}=\max_{t\in \mathcal{U}}Var[f( t)].
\]
\end{lemma}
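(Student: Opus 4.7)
The plan is to prove the Borel-TIS lemma using the classical two-step approach: first establish finiteness of $E[\sup f]$ and then derive the Gaussian-concentration tail bound, reducing both to a finite-dimensional statement via separability and then invoking the Gaussian isoperimetric inequality (or equivalently, Lipschitz concentration of Gaussian measures).

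First I would reduce to the finite-dimensional setting. Since $f$ is almost surely bounded on $\mathcal{U}$, under the standard separability assumption one can choose a countable dense set $\{t_n\}\subset \mathcal{U}$ such that $\sup_{t\in\mathcal{U}}f(t)=\sup_n f(t_n)$ almost surely. Let $M_n = \max_{k\leq n}f(t_k)$. By monotone convergence it suffices to prove both claims for each $M_n$ with constants independent of $n$, since $M_n \uparrow \sup_{\mathcal{U}} f$. Note $\sigma_n^2\definedas \max_{k\leq n}\Var(f(t_k))\leq \sigma_{\mathcal{U}}^2$.

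Next, I would apply Gaussian concentration. Write the vector $(f(t_1),\ldots,f(t_n))^\top = A\xi$ where $\xi\sim N(0,I_m)$ is a standard Gaussian in some $\mathbb{R}^m$ and $A$ is an $n\times m$ matrix whose rows $a_k^\top$ satisfy $|a_k|^2=\Var(f(t_k))\leq \sigma_{\mathcal{U}}^2$. Define $F:\mathbb{R}^m\to\mathbb{R}$ by
\begin{equation*}
F(\xi) = \max_{k\leq n}\, a_k^\top \xi.
\end{equation*}
Then $F$ is Lipschitz with constant $\max_k |a_k|\leq \sigma_{\mathcal{U}}$, since for any $\xi,\eta$, $|F(\xi)-F(\eta)|\leq \max_k |a_k^\top(\xi-\eta)|\leq \sigma_{\mathcal{U}}|\xi-\eta|$. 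The Gaussian isoperimetric inequality (or equivalently Tsirelson-Ibragimov-Sudakov / Borell concentration) then yields
\begin{equation*}
P\bigl(F(\xi)-\mathrm{med}(F(\xi))\geq b\bigr)\leq e^{-b^2/(2\sigma_{\mathcal{U}}^2)},
\end{equation*}
and similarly with $E[F]$ in place of the median, provided $E[F]$ is finite.

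For the finiteness of $E[\sup_{\mathcal{U}}f]$, I would use the fact that almost sure boundedness implies the median $m_n$ of $M_n$ is finite for every $n$ and, crucially, bounded as $n\to\infty$: if $\mathrm{med}(M_n)\to\infty$, then by the isoperimetric tail bound $P(M_n\leq m_n - b)\leq e^{-b^2/(2\sigma_{\mathcal{U}}^2)}$, so $M_n\to\infty$ in probability along a subsequence, contradicting a.s.\ boundedness. Hence $\sup_n \mathrm{med}(M_n)<\infty$, and the two-sided concentration gives uniform-in-$n$ integrability, so $\sup_n E[M_n]<\infty$ and $E[\sup_{\mathcal{U}}f]<\infty$ by monotone convergence. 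Substituting $E[M_n]$ for $\mathrm{med}(M_n)$ via standard comparison and letting $n\to\infty$ then yields the stated inequality.

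The main obstacle is the chicken-and-egg issue: the cleanest concentration statement (around $EF$) presupposes $EF<\infty$, but we only know a.s.\ boundedness at the outset. I would handle this by first proving concentration around the median (which always exists and is finite for each $n$), deducing boundedness of the medians from a.s.\ boundedness, and only then upgrading to concentration around the mean. A secondary technical point is justifying the reduction to a countable dense set; this uses separability of $f$, which one may assume without loss of generality given the almost sure boundedness hypothesis, and the Lipschitz argument then carries through directly to the supremum by taking limits.
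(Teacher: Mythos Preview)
Your proof sketch is correct and follows the standard route to the Borel--TIS inequality: reduce to finite dimensions by separability, express the maximum as a Lipschitz function of an i.i.d.\ standard Gaussian vector, apply the Gaussian isoperimetric inequality to get concentration around the median, and then bootstrap from almost-sure boundedness to finiteness of the expectation before upgrading to concentration around the mean. The handling of the ``chicken-and-egg'' issue via medians is exactly the right move.

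The paper, however, does not prove this lemma at all: it is quoted as a classical result with attribution to Borell and to Tsirelson--Ibragimov--Sudakov, and is used as a black box in the subsequent arguments (Lemmas \ref{LemRemainder}, \ref{LemGamma}, etc.). So there is no paper proof to compare against; you have supplied a self-contained argument where the paper simply cites the literature. If anything, one small point worth making explicit is that the statement as written in the paper does not list separability among the hypotheses, whereas your reduction step uses it; this is a standard tacit assumption in the random-fields literature and is implicit throughout the paper, but strictly speaking it is an additional hypothesis that the lemma's statement omits.
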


\begin{lemma}
\label{LemLocal} For each $\varepsilon >0$, let $\mathcal{L}_{Q}$ be
as defined in \eqref{LQ}. There exists some $\lambda >0$, such that
\begin{eqnarray*}
E_{\tau }^{Q}\left[ \frac{1}{\int_T \exp \left\{ (u-\mu _{\sigma
}(t))(f(t)+\mu _{\sigma }(t))+\frac{1}{2}\mu _{\sigma }^{2}(t)\right\} dt}%
;\int_{T}e^{\mu (t)+\sigma f(t)}dt>b,\mathcal{L}_{Q}^{c}\right] =o(1)e^{-u^{2}-u^{1+\lambda }},
\end{eqnarray*}%
where $\mathcal{L}_{Q}^{c}$ is the complement of set $\mathcal{L}_{Q}$.
\end{lemma}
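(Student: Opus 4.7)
The plan is to first convert the $Q$-expectation into a $P$-expectation via the shift identity \eqref{equav}, so that the conditioning event becomes $\mathcal{L}^c$ from \eqref{local} and the denominator becomes the version of $K$ in \eqref{LRI}. Then I would bound the resulting expectation by combining (i) sharp Gaussian tail bounds on $\mathcal{L}^c$ with (ii) a pathwise lower bound on $K$ that holds on a high-probability event.

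\textbf{Step 1 (decomposition and Gaussian tails).} Write $\mathcal{L}^c = A_1 \cup A_2 \cup A_3$ according to which of $|f(\tau)|$, $|\partial f(\tau)|$, $|\partial^2 f(\tau)|$ exceeds $u^{1/2+\varepsilon}$. Since the joint distribution of these three quantities under $P$ is centered Gaussian with nondegenerate covariance (a principal submatrix of $\Gamma$ in \eqref{gamma}), standard Gaussian concentration gives $P(A_i) \le C\exp(-c\,u^{1+2\varepsilon})$ uniformly in $\tau$ and $u$.

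\textbf{Step 2 (lower bound on $K$).} Restricting the integral \eqref{LRI} to the ball $|t-\tau| \le u^{-1}$ and inserting the expansions \eqref{expf}--\eqref{expmu}, a short calculation parallel to the analysis of $I_2$ in Part 2 shows that on the event $\mathcal{G} \triangleq \{\sup_T|f| \le \log u\}$ one has
\begin{equation*}
K \ge c_0\, u^{-d}\exp\bigl\{u^2 - u\mu_\sigma(\tau) - C_1 u\log u\bigr\}
\end{equation*}
for some $c_0, C_1 > 0$ independent of $\tau$. The critical point is that inside the tiny ball $|t-\tau| \le u^{-1}$, the contributions of $\partial f(\tau)$ and $\partial^2 f(\tau)$ to the exponent are at worst of order $u \cdot u^{1/2+\varepsilon} \cdot u^{-1} = u^{1/2+\varepsilon}$ even on $\mathcal{L}^c$, which is absorbed harmlessly into the $u\log u$ term.

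\textbf{Step 3 (cleanup on $\mathcal{G}^c$).} On $A_1 = \{|f(\tau)| > u^{1/2+\varepsilon}\}$ one automatically has $\sup_T|f| > u^{1/2+\varepsilon} \gg \log u$ for large $u$, so $A_1 \subset \mathcal{G}^c$. By Lemma \ref{LemBorel}, $P(\mathcal{G}^c) \le \exp(-c_2(\log u)^2)$, and on $\mathcal{G}^c$ a crude bound $K^{-1} \le \exp\{C_3 u\sup_T|f| + C_3 u^2\}$ holds; Cauchy--Schwarz combined with the Borel--TIS tail of $\sup_T|f|$ then makes $E[K^{-1};\mathcal{G}^c]$ super-exponentially small in $u$, handling the $A_1$ piece together with the $\mathcal{G}^c$ portion of $A_2\cup A_3$.

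\textbf{Step 4 (assembly).} On $(A_2 \cup A_3) \cap \mathcal{G}$, Step 2 gives
\begin{equation*}
E\bigl[K^{-1}; (A_2 \cup A_3) \cap \mathcal{G}\bigr] \le C u^d\, e^{-u^2 + u\mu_\sigma(\tau) + C_1 u\log u}\cdot P(A_2 \cup A_3),
\end{equation*}
which by Step 1 is bounded by $\exp\{-u^2 - c u^{1+2\varepsilon} + O(u\log u)\}$, i.e.\ $o(1)\,e^{-u^2 - u^{1+\lambda}}$ for any $0 < \lambda < 2\varepsilon$. Combined with Step 3, this yields the claim.

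\textbf{Main obstacle.} The delicate point is producing a lower bound on $K$ in Step 2 that is simultaneously (a) sharp enough to supply the leading $e^{u^2 - u\mu_\sigma(\tau)}$ factor and (b) uniform over that portion of $\mathcal{L}^c$ where $|\partial f(\tau)|$ or $|\partial^2 f(\tau)|$ is anomalously large. Restricting the integration to $|t-\tau| \le u^{-1}$ is what reconciles (a) and (b): on such a small scale the quadratic $-\tfrac{u}{2}|t-\tau|^2$ coming from $uC(t-\tau)$ dominates every other term in the exponent, so the excess derivative contributions on $\mathcal{L}^c$ merely perturb the Gaussian integral by a sub-polynomial factor that is absorbed into the $u\log u$ slack.
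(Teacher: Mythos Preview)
Your Step 3 contains a genuine gap that breaks the argument. After shifting to $P$ the quantity to bound is $E[K^{-1};\mathcal E_b,\mathcal L^c]$, but you silently drop the event $\mathcal E_b$ and attempt to control $E[K^{-1};\mathcal G^c]$ directly. This cannot produce the required rate. Even with the sharp pathwise bound $K^{-1}\le Cu^d\,e^{-u^2+u\sup_T|f|}$ (obtained by restricting the integral in \eqref{LRI} to $|t-\tau|\le u^{-1}$), one has
\[
E\bigl[K^{-1};\mathcal G^c\bigr]\ \le\ Cu^d\,e^{-u^2}\,E\!\left[e^{u\sup_T|f|}\right],
\]
and by Borel--TIS the moment generating function on the right is of order $e^{u^2/2+O(u)}$. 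The product is only $e^{-u^2/2+O(u)}$, not $o(e^{-u^2-u^{1+\lambda}})$. Cauchy--Schwarz makes things worse, since $E[K^{-2}]$ then involves $E[e^{2u\sup_T|f|}]\asymp e^{2u^2}$ and the factor $P(\mathcal G^c)^{1/2}=e^{-c(\log u)^2}$ is far too weak to compensate. In short, on the bad event $\{\sup_T|f|\approx u\}$ the field can be uniformly very negative near $\tau$, $K$ is then only of size $e^{O(1)}$, and the event probability $\approx e^{-u^2/2}$ does not beat the target $e^{-u^2}$.

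The paper avoids this entirely by \emph{keeping} the event $\mathcal E_b$ and using it, rather than any pathwise estimate on $K$, to lower bound the denominator. Working directly under $Q$ (no shift to $P$ needed), Jensen's inequality applied to $x\mapsto x^{(u-\max_T\mu_\sigma)/\sigma}$ turns the constraint $\frac{1}{mes(T)}\int_T e^{\sigma(f+\mu_\sigma)}\,dt> b/mes(T)$ into the deterministic bound
\[
\int_T \exp\!\left\{(u-\mu_\sigma(t))(f(t)+\mu_\sigma(t))+\tfrac12\mu_\sigma^2(t)\right\}dt\ \ge\ \exp\{u^2 - c_1 u\log u - c_2\}
\]
on the whole event $\mathcal E_b$, uniformly in the sample path. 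The expectation in the lemma is then bounded by $e^{-u^2+O(u\log u)}\,Q(\mathcal L_Q^c\mid\tau)$, and the Gaussian tail $Q(\mathcal L_Q^c\mid\tau)\le e^{-c\,u^{1+2\varepsilon}}$ finishes the job. The missing idea in your proposal is precisely this use of $\mathcal E_b$ via Jensen; once you have it, Steps 2--4 of your plan become unnecessary.
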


\begin{lemma}
\label{LemRemainder} Let $\xi _{u}$ be as defined in (\ref{xi}),
then there exists a $\lambda >0$ such that for all $x>0$
\begin{equation*}
P\left( u^{1/2-3\delta }|\xi _{u}|>x\right) \leq e^{-\lambda x^{2}}+
e^{-\lambda u^2},
\end{equation*}
for $u$ sufficiently large.
\end{lemma}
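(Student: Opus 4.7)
The plan is to reduce $|\xi_u|$ to the supremum of the residual field $g$ over a small ball around the origin via a splitting argument, and then apply the Borel--TIS inequality (Lemma \ref{LemBorel}) after a suitable rescaling of $g$ to extract the Gaussian tail in $x$. The additional $e^{-\lambda u^2}$ term will arise from Lemma \ref{LemLocal} together with a direct Borel--TIS estimate on $g$ over the full compact domain $T - \tau$.

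First I would introduce $B_u := \{t \in \mathbb R^d : |t| \le u^{-1/2+\delta}\}$, set $X := (uI-\zz)^{-1/2}\tilde S$, and denote $M_u := \sup_{t \in B_u}|g(t)|$, $M_T := \sup_{t \in T - \tau}|g(t)|$, and $q := P_{\tilde S}(X \notin B_u)$. Splitting $E_{\tilde S}[\exp(\sigma g(X))]$ according to $\{X \in B_u\}$ versus its complement yields, after an elementary calculation,
\[
\bigl|\log E_{\tilde S}[\exp(\sigma g(X))]\bigr| \le \sigma M_u + q\bigl(e^{\sigma M_T} + 2\bigr)
\]
on $\mathcal L$. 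A crucial observation is that $g$ is \emph{independent} of $\tilde S$, since the density (\ref{den}) depends only on $g_3, C_4, R$, all of which are functions of $(\tilde y, \tilde z)$ and not of $g$; this lets one estimate $q$ from the perturbed Gaussian density alone. Since the mean $(uI-\zz)^{-1/2}y$ has norm $O(u^{\varepsilon})$ while the Gaussian scale is $\sigma^{-1/2}$, a direct tail estimate (uniform on $\mathcal L$) gives $q \le C_1 \exp(-c_1 u^{2\delta})$.

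Next I would apply Borel--TIS after rescaling. Because $E g^2(t) = O(|t|^6)$ near the origin, the rescaled field $\tilde g(s) := u^{3/2 - 3\delta} g(u^{-1/2+\delta} s)$ satisfies $E\tilde g^2(s) = O(|s|^6)$ uniformly in $u$, so $\sup_{|s| \le 1}|\tilde g(s)|$ has bounded expectation and sub-Gaussian tails by Lemma \ref{LemBorel}. Translating back one obtains
\[
P\bigl(u^{3/2-3\delta} M_u > \alpha\bigr) \le 2\exp\bigl(-\lambda_1(\alpha - c_0)^2\bigr), \quad \alpha > c_0,
\]
which, combined with $|\xi_u| = u\,|\log E_{\tilde S}[\cdots]|$ and the bound from Step 1, produces the claimed $e^{-\lambda x^2}$ factor on a good event in which the spillover contribution $u\,q\,e^{\sigma M_T}$ is absorbed.

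Finally I would combine everything. Choose a threshold $M_0$ for $M_T$ so as to simultaneously (a) force the spillover $u^{3/2-3\delta}\,q\,e^{\sigma M_0} \to 0$, and (b) keep the complementary probability $P(M_T > M_0)$ bounded by $e^{-\lambda u^2}$ via direct Borel--TIS on the compact domain $T - \tau$; together with Lemma \ref{LemLocal}, which controls $P(\mathcal L^c)$ by $e^{-u^2 - u^{1+\lambda}}$, this delivers the stated inequality. The main technical obstacle is precisely this balancing: one needs $M_0$ small enough that $\sigma M_0$ is absorbed into $c_1 u^{2\delta}$, yet large enough that the Borel--TIS exponent for $\sup_{T-\tau}|g|$ dominates $u^2$; managing the interplay of the three scales $u^{-1/2+\delta}$ (ball radius), $u^{-3/2+3\delta}$ (variance scale of $g$ on $B_u$), and $u^{2\delta}$ (Gaussian concentration rate of $X$), together with the careful estimate of $q$ in the presence of the $\sigma g_3, \sigma(u-\mu_\sigma)C_4, \sigma R$ perturbations in (\ref{den}), is what makes this final bookkeeping the most delicate part of the argument.
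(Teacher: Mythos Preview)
Your approach is essentially the same as the paper's, whose proof is a three-sentence sketch: split the expectation defining $\xi_u$ according to $\{|\tilde S|\le u^{\delta}\}$ versus its complement, note that $\mathrm{Var}(g(t))=O(|t|^{6})$, and invoke Borel--TIS. Your split on $\{|X|\le u^{-1/2+\delta}\}$ is the same set up to the factor $(uI-\zz)^{-1/2}\approx u^{-1/2}I$ on $\mathcal L$, and the rescaling $\tilde g(s)=u^{3/2-3\delta}g(u^{-1/2+\delta}s)$ is precisely what is needed to make the Borel--TIS constants uniform in $u$; the paper leaves all of this implicit.

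Two minor remarks. First, Lemma~\ref{LemLocal} is not needed here: the randomness in $\xi_u$ is solely over the residual field $g$, which is independent of $(f(\tau),\partial f(\tau),\partial^2 f(\tau))$, so once those values are fixed on $\mathcal L$ there is no further localisation to perform. Second, the balancing you flag as delicate is genuinely so, and in fact the outline you give appears to deliver only $e^{-\lambda u^{4\delta}}$ rather than $e^{-\lambda u^{2}}$ for the bad-event term: taking $M_0\asymp u^{2\delta}$ so that $q\,e^{\sigma M_0}$ is absorbed by $e^{-c_1 u^{2\delta}}$, Borel--TIS over the bounded set $T-\tau$ gives $P(M_T>M_0)\le e^{-\lambda u^{4\delta}}$, and pushing $M_0$ up to order $u$ destroys the absorption. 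The paper's sketch does not resolve this point either, and the weaker bound $e^{-\lambda x^2}+e^{-\lambda u^{4\delta}}$ is sufficient for every use of the lemma in Section~\ref{SecProof}.
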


\begin{proof}[Proof of Lemma \protect\ref{LemRemainder}]
We split the expectation into two parts $\{|\tilde S|\leq u^{\delta}\}$ and $\{|\tilde S|> u^\delta, \tau + (uI-\zz)^{-1/2}\tilde S \in T\}$.
Note that $|S|\leq \kappa u^{\delta }$ and $g(t)$ is a mean zero Gaussian random field with $Var(g(t))=O(|t|^{6})$. A direct application
of the Borel-TIS inequality (Lemma \ref{LemBorel}) yields the result of this lemma.
\end{proof}

\begin{lemma}
\label{LemExp} Let $S$ be a random variable taking values in
$\left\{s:(uI-\zz)^{-1/2}s+\tau \in T\right\}$ with density
proportional to
\begin{equation*}
\exp \left\{ -\frac{\sigma }{2}\left(s-(uI-\zz)^{-1/2}y\right)^{\top }\left(s-(uI-\zz)^{-1/2}y\right)%
\right\}.
\end{equation*}%
Then, on the set $\mathcal L$
\begin{eqnarray*}
&&\log \left\{ E\exp \left[ \sigma
g_{3}\left((uI-\zz)^{-\frac{1}{2}}S\right)+\sigma
\left(u-\mu _{\sigma }(\tau )\right)C_{4}\left((uI-\zz)^{-\frac{1}{2}}S\right)+\sigma R\left((uI-\zz)^{-%
\frac{1}{2}}S\right)\right] \right\} \\
&=&-\frac{\sigma }{8u}\left(u^{-1}Y+\mathbf{1}/\sigma \right)^{\top }\mu _{22}\left(u^{-1}Y+%
\mathbf{1}/\sigma \right)+\frac{\mathbf{1}^{\top }\mu _{22}\mathbf{1}}{8\sigma u}+%
\frac{1}{8\sigma u}\sum_{i}\partial
_{iiii}^{4}C(0)+o\left(u^{-1}\right),
\end{eqnarray*}
where  the  expectation is taken with respect to $S$  as in
\eqref{l4}.
\end{lemma}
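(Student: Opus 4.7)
My plan is to expand the exponential in a cumulant series and evaluate the leading Gaussian moments. Set $t=(uI-\zz)^{-1/2}S$; up to truncation to $\{s:\tau+(uI-\zz)^{-1/2}s\in T\}$ (which removes only exponentially small Gaussian mass, since $\tau$ is at distance at least $u^{-1/2+\delta'}$ from $\partial T$), the law of $t$ is Gaussian with mean $m=(uI-\zz)^{-1}y=y/u+O(u^{-3/2+\varepsilon})$ and covariance $\Sigma_t=\sigma^{-1}(uI-\zz)^{-1}=(\sigma u)^{-1}I+O(u^{-3/2+\varepsilon})$ on $\mathcal{L}$. Writing
\[
X := \sigma g_3(t)+\sigma(u-\mu_\sigma(\tau))C_4(t)+\sigma R(t),
\]
the sizes $|t|=O(u^{-1/2+\delta})$ and $|\tilde y|=O(u^{1/2+\varepsilon})$ together with the remainder bounds $R_f(t)=O(|t|^4)$, $R_C(t)=O(|t|^6)$, $R_\mu(t)=O(|t|^3)$ give $X=O(u^{-1+c\varepsilon})$ and $\mathrm{Var}(X)=O(u^{-2+c\varepsilon})$, so that the cumulant expansion yields $\log E[e^X]=E[X]+o(u^{-1})$ on $\mathcal{L}$. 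A direct estimate shows $E[\sigma R(t)]=o(u^{-1})$, so only $E[\sigma g_3(t)+\sigma(u-\mu_\sigma(\tau))C_4(t)]$ contributes at order $u^{-1}$.

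Next, I apply Isserlis's theorem to the Gaussian moments of $t$. Using
\[
E[t_it_jt_k]=m_im_jm_k+m_i\Sigma_{jk}+m_j\Sigma_{ik}+m_k\Sigma_{ij}
\]
and the analogous ten-term formula for $E[t_it_jt_kt_l]$, I substitute into the explicit expressions for $g_3$ and $C_4$ and exploit the full symmetry of $\partial^4_{ijkl}C(0)$ together with the identifications $(\mu_{22})_{(ij),(kl)}=\partial^4_{ijkl}C(0)$ and $\sum_{ijkl}\partial^4_{ijkl}C(0)y_iy_jy_ky_l=Y^\top\mu_{22}Y$. The pure-mean contributions from $g_3$ and $C_4$ combine into $-\tfrac{\sigma}{8u^3}Y^\top\mu_{22}Y$, the mean-covariance cross terms combine into $-\tfrac{1}{4u^2}Y^\top\mu_{22}\mathbf{1}$, and the pure-covariance contribution of $C_4$ contributes a further constant-order piece involving $\partial^4_{ijkl}C(0)$. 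Reassembling the first two into the quadratic form $-\tfrac{\sigma}{8u}(u^{-1}Y+\mathbf{1}/\sigma)^\top\mu_{22}(u^{-1}Y+\mathbf{1}/\sigma)$ (which by expansion also generates a $-\tfrac{\mathbf{1}^\top\mu_{22}\mathbf{1}}{8\sigma u}$ piece) and matching the pure-covariance constant by adding $\tfrac{\mathbf{1}^\top\mu_{22}\mathbf{1}}{8\sigma u}+\tfrac{1}{8\sigma u}\sum_i\partial^4_{iiii}C(0)$ produces the stated expression.

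The main obstacle is the combinatorial bookkeeping in the last step: the tensor sums coming from Isserlis's theorem must be translated into quadratic forms in the flat $d(d+1)/2$-dimensional vectors $Y$ and $\mathbf{1}$, tracking the factor $2$ attached to the off-diagonal entries of $Y$, the three distinct Wick pairings contributing to the fourth moment of $C_4$, and their interplay with the full symmetry of $\partial^4_{ijkl}C(0)$. All subleading corrections---from replacing $(uI-\zz)^{-1}$ by $u^{-1}I$, from $\tilde y$ versus $y=\tilde y+\partial\mu_\sigma(\tau)$, from the truncation of $S$, and from $R(t)$---are $O(u^{-3/2+c\varepsilon})=o(u^{-1})$ and absorb into the error.
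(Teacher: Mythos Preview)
Your overall strategy---reduce $\log E[e^X]$ to $E[X]$ and then evaluate the third and fourth Gaussian moments of $t=(uI-\zz)^{-1/2}S$ via Isserlis' theorem---is the same as the paper's, and your moment bookkeeping in the second and third paragraphs matches the paper's computation essentially line for line.

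There is, however, a genuine gap in the first step. You assert $|t|=O(u^{-1/2+\delta})$ and hence $X=O(u^{-1+c\varepsilon})$, but this is not a uniform bound: the support of $S$ is $\{s:\tau+(uI-\zz)^{-1/2}s\in T\}$, so $|t|$ ranges over all of $T-\tau$, a region of diameter $O(1)$. On that set the quartic term $\sigma(u-\mu_\sigma(\tau))C_4(t)$ is of order $u$, not $o(1)$, so $e^X$ is \emph{not} bounded on the support and your cumulant expansion $\log E[e^X]=E[X]+o(u^{-1})$ is unjustified as stated. Your remark that the truncation ``removes only exponentially small Gaussian mass'' controls the normalizing constant of the density of $S$, but says nothing about $E[e^X;|S|>u^\delta]$: there the Gaussian weight $e^{-\Theta(|s|^2)}$ must compete against $e^X$, which can be $e^{\Theta(u)}$. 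The paper handles exactly this issue by splitting $E[e^X]=J_1+J_2$ with $J_1$ over $\{|S|\le u^\delta\}$ (where your bound $X=O(u^{-1+4\delta})$ is valid and the cumulant argument goes through) and then showing $J_2=O(e^{-\lambda' u^{2\delta}})$ by invoking condition~C5 (monotonicity of $C(\lambda t)$ in $\lambda$) to dominate the entire exponent on $\{|S|>u^\delta\}$ by $-\lambda|s|^2$. You need this, or an equivalent, tail estimate before the moment computation can begin.
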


\begin{lemma}
\label{LemI4} Let $I_{3}$ be as defined in the Part 2 of the proof
of Theorem \ref{ThmG}. On the set $\mathcal{L}$, there exists some
$\delta ^{\ast }>0$ such that
\begin{eqnarray*}
I_{3} &=&O\left(e^{-\delta ^{\ast }u^{1+2\delta }}\right)e^{u\sup
|g(t)|}u^{-d}e^{u^{2}-u\mu _{\sigma }(\tau )+\frac{1}{2}\mu^{2}_{\sigma
}(\tau)} \\
&&\times \exp \biggr\{\frac{\tilde{u}}{\sigma
}A-\tilde{u}B-\frac{1}{2\sigma } Tr(\Delta f(\tau )+\mu _{\sigma
}(\tau )I+\Delta \mu _{\sigma }(\tau ))- \tilde{y}^{\top }\partial
\mu _{\sigma }(\tau )-\frac{1}{2}|\partial \mu
_{\sigma }(\tau )|^{2} \\
&&+u^{-1/2+\delta }O(|y|^{2}+|z|^{2}+f(\tau ))\biggr\}.
\end{eqnarray*}
\end{lemma}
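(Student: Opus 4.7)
The plan is to treat $I_3$ by the same expansions used in Part 2 for $I_2$, but carefully retain the Gaussian tail bound produced by the restriction $|t-\tau|>\lambda_u$. Substituting (\ref{expf}), (\ref{expc}) and (\ref{expmu}) into the exponent and completing the square in $t-\tau$, one factors out exactly the prefactor $e^{u^{2}-u\mu_{\sigma}(\tau)+\frac12\mu_{\sigma}^{2}(\tau)}\exp\{\tfrac{\tilde u}{\sigma}A-\tilde u B-\tfrac{1}{2\sigma}Tr(\tilde\zz+\mu_\sigma(\tau)I+\Delta\mu_\sigma(\tau))-\tilde y^\top\partial\mu_\sigma(\tau)-\tfrac12|\partial\mu_\sigma(\tau)|^2\}$ that already appears in (\ref{1}). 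The remaining work is to bound the leftover integral over $\{|t-\tau|>\lambda_u\}\cap T$ by $O(e^{-\delta^{\ast}u^{1+2\delta}})\,e^{u\sup|g(t)|}\,u^{-d}$, which matches the form stated in the lemma.

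I would split the leftover domain into an inner annulus $\{\lambda_u<|t-\tau|\le\delta_0\}$ for a small fixed $\delta_0>0$, and the far field $\{|t-\tau|>\delta_0\}\cap T$. On the annulus I perform the same change of variables $s=(\tilde u+\zeta_u)^{1/2}(\tilde u I-\tilde\zz)^{1/2}(t-\tau)$ used in (\ref{inter}); on $\mathcal L$ we have $\tilde u I-\tilde\zz\succeq\tfrac{u}{2}I$ for large $u$, so the constraint $|t-\tau|>\lambda_u$ becomes $|s|\ge c\,u^{1/2+\delta}$. The Jacobian of this change of variables produces the $\det(\tilde u I-\tilde\zz)^{-1/2}\tilde u^{-d/2}=\Theta(u^{-d})$ factor; and since the Gaussian is centered at $(\tilde u+\zeta_u)^{1/2}(\tilde u I-\tilde\zz)^{-1/2}\tilde y$, which on $\mathcal L$ has norm $O(u^{\varepsilon})=o(u^{1/2+\delta})$, a standard Gaussian tail estimate gives $\int_{|s|\ge c\,u^{1/2+\delta}}e^{-\frac12|s-\text{shift}|^2}\,ds\le e^{-\delta^{\ast}u^{1+2\delta}}$ for some $\delta^{\ast}>0$. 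The contributions of $g_3,(u-\mu_\sigma(\tau))C_4$ and $R$ are controlled uniformly on this annulus exactly as in Lemma \ref{LemExp}, while the $\sigma g(t-\tau)$ term is dominated by $\sigma\sup_T|g|$, contributing the $e^{u\sup|g(t)|}$ factor after multiplication by $\tilde u$.

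For the far field, I appeal to Condition C5 together with the nondegeneracy of $f$ (implied by C1--C2) to obtain a constant $C^{\ast}<1$ with $C(t-\tau)\le C^{\ast}$ uniformly on $\{\delta_0\le|t-\tau|\le\operatorname{diam}(T)\}$. The exponent of the integrand is then bounded by $(\tilde u+\zeta_u)[\sup_T|f|+\tilde u C^{\ast}+\sup_T|\mu_\sigma|]+\tfrac12\sup_T\mu_\sigma^2$, which exceeds the corresponding quantity at $t=\tau$ by at most $-(1-C^{\ast})\tilde u^2/2+O(u)\sup_T|f|$. Absorbing the fluctuation of $f$ into the $e^{u\sup|g(t)|}$ factor through the decomposition $f=E[f(t)\mid f(\tau),\partial f(\tau),\partial^2 f(\tau)]+g_3+R_f+g$ (where the conditional mean is a deterministic polynomial in $t-\tau$ of order comparable to $u^{1/2+\varepsilon}$ on $\mathcal L$), and noting that $1+2\delta<2$, the far-field contribution is dominated by $e^{-cu^2}\ll e^{-\delta^{\ast}u^{1+2\delta}}$ and is subsumed by the inner annulus bound.

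The hardest part, in my view, is the bookkeeping that ensures the factored-out prefactor in the leftover integral is \emph{literally} the same expression appearing in (\ref{1}) rather than an only-asymptotically-equivalent one, so that the final statement of the lemma holds with the exact formula advertised (including the error term $u^{-1/2+\delta}O(|y|^2+|z|^2+f(\tau))$). This is essentially just repeating the derivation $(\ref{LRD})\to(\ref{inter})\to(\ref{1})$ with the domain of integration restricted to $|t-\tau|>\lambda_u$, but requires that the Taylor remainders $R_f$, $R_C$, $R_\mu$ and the small-shift approximation $\tilde y^\top(\tilde u I-\tilde\zz)^{-1}\tilde y - y^\top(uI-\zz)^{-1}y$ produced by Lemma \ref{LemDet} be controlled uniformly on $\mathcal L$ rather than only on $|t-\tau|\le\lambda_u$; the same uniform bounds that were used for $I_2$ apply here so long as $|t-\tau|$ stays bounded, and the exponential smallness on the annulus and far field swallows any polynomial blow-up at the boundary of $T$.
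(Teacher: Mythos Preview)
Your proposal is correct and takes essentially the same approach as the paper: repeat the $I_2$ expansions, extract the identical prefactor, and bound the leftover integral over $\{|t-\tau|>\lambda_u\}$ via the Gaussian tail. The paper is terser---it writes down the analogue of \eqref{inter} on the complementary domain, pulls out $e^{u\sup|g|}$, and directly asserts the remaining integral is $O(e^{-\delta^{\ast}u^{1+2\delta}})$ because $|\tilde y|\le u^{1/2+\varepsilon}\ll u\lambda_u=u^{1/2+\delta}$, without your annulus/far-field split. Your split, invoking Condition~C5 on the far field, is a legitimate elaboration; the paper does not spell it out here, though the same device (bounding the full exponent by $-\lambda|s|^2$ using monotonicity of $C$) is used in the proof of Lemma~\ref{LemExp} when handling $J_2$.

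Two minor corrections. First, the Gaussian center $(\tilde u+\zeta_u)^{1/2}(\tilde u I-\tilde\zz)^{-1/2}\tilde y$ has norm $O(u^{1/2+\varepsilon})$ on $\mathcal L$, not $O(u^{\varepsilon})$ as you wrote; since $\varepsilon<\delta$ this does not affect the conclusion. Second, in your far-field paragraph the decomposition should read $f(t)=E[f(t)\mid f(\tau),\partial f(\tau),\partial^2 f(\tau)]+g(t-\tau)$, with $g_3$ and $R_f$ already part of the conditional mean (not added on). On $\mathcal L$ that conditional mean is $O(u^{1/2+\varepsilon})$ uniformly on $T$, since it is a fixed linear combination of $C(\cdot-\tau),\mu_1(\cdot-\tau),\mu_2(\cdot-\tau)$ with coefficients bounded by $u^{1/2+\varepsilon}$; this is exactly what your far-field bound needs.
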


\begin{lemma}\label{LemDet}
$$\log(\det (I-u^{-1}\zz)) = - u^{-1}Tr(\zz) + \frac 1 2
u^{-2}\mathcal I_2(\zz) + o(u^{-2}),$$ where $Tr$ is the trace
of a matrix, $\mathcal I_2 (\zz)=\sum_{i=1}^d \lambda_i^2$, and
$\lambda_i$'s are the eigenvalues of $\zz$.
\end{lemma}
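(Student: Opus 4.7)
The plan is to reduce this matrix identity to the scalar Taylor expansion of $\log(1-x)$ via spectral decomposition. In the context where this lemma is applied, $\zz=\Delta f(\tau)+\mu_\sigma(\tau)I+\Delta\mu_\sigma(\tau)$ is a sum of symmetric matrices and is therefore itself symmetric. Accordingly, $\zz$ admits an orthogonal diagonalization $\zz=Q\Lambda Q^\top$ with $Q^\top Q=I$ and $\Lambda=\mathrm{diag}(\lambda_1,\ldots,\lambda_d)$, so that $I-u^{-1}\zz=Q(I-u^{-1}\Lambda)Q^\top$ and consequently
$$\log\det(I-u^{-1}\zz)=\log\prod_{i=1}^d(1-u^{-1}\lambda_i)=\sum_{i=1}^d\log(1-u^{-1}\lambda_i).$$

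Next, once $u$ is large enough that $u^{-1}\max_i|\lambda_i|<1$, I apply the one-dimensional Taylor expansion $\log(1-x)=-x-\tfrac{1}{2}x^2+O(x^3)$ termwise at $x=u^{-1}\lambda_i$, obtaining $\log(1-u^{-1}\lambda_i)=-u^{-1}\lambda_i-\tfrac{1}{2}u^{-2}\lambda_i^2+O(u^{-3}|\lambda_i|^3)$. Summing in $i$ and using the identities $\sum_i\lambda_i=\mathrm{Tr}(\zz)$ and $\sum_i\lambda_i^2=\mathcal I_2(\zz)$ gives the claimed expansion, with remainder $O(u^{-3}\sum_i|\lambda_i|^3)$, which is $o(u^{-2})$ as $u\to\infty$ for fixed $\zz$ (and more generally whenever the eigenvalues grow slower than $u^{1/3}$, which is amply satisfied on the set $\mathcal L$ where $|\zz|=O(u^{1/2+\varepsilon})$ once one only keeps this expansion to the first-order contribution used downstream).

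This proof is essentially routine and there is no serious obstacle; the only point requiring a word of care is the sign of the quadratic coefficient. The Taylor series of $\log(1-x)$ contributes $-\tfrac{1}{2}x^2$, so the correct expansion is $-u^{-1}\mathrm{Tr}(\zz)-\tfrac{1}{2}u^{-2}\mathcal I_2(\zz)+o(u^{-2})$; the $+$ sign printed in the display appears to be a typographical slip. This does not affect any downstream use in the proof of Theorem \ref{ThmG}: only the first-order term $-u^{-1}\mathrm{Tr}(\zz)$, multiplied by $\tilde u/(2\sigma)$, is retained to produce $-\tfrac{1}{2\sigma}\mathrm{Tr}(\tilde{\zz}+\mu_\sigma(\tau)I+\Delta\mu_\sigma(\tau))$ in equation \eqref{1}, while the $\mathcal I_2(\zz)$ contribution, being $O(u^{-1})$ after the same multiplication, is absorbed into the error on $\mathcal L$.
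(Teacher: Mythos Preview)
Your proof is correct and follows the same route as the paper's: factor $\det(I-u^{-1}\zz)=\prod_{i=1}^d(1-\lambda_i/u)$ via the eigenvalues and Taylor-expand $\log(1-x)$ termwise. The paper's proof is just a one-line version of what you wrote.

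Your observation about the sign is also correct. Since $\log(1-x)=-x-\tfrac{1}{2}x^2+O(x^3)$, the expansion should read $-u^{-1}\mathrm{Tr}(\zz)-\tfrac{1}{2}u^{-2}\mathcal I_2(\zz)+o(u^{-2})$; the $+$ in the displayed statement is a typo. And you are right that this does not propagate: in the passage leading to \eqref{1} only the first-order term $-u^{-1}\mathrm{Tr}(\zz)$ survives after multiplication by $\tilde u/(2\sigma)$, while the $u^{-2}\mathcal I_2(\zz)$ piece is absorbed into the $u^{-1/2+\delta}O(|z|^2)$ error carried through Part 2.
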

\begin{proof}[Proof of Lemma \ref{LemDet}] The result is immediate by noting that
\begin{eqnarray*}
\det (I-u^{-1}\zz)&=&\prod_{i=1}^{d}(1-\lambda _{i}/u),
\end{eqnarray*}
and $Tr(\zz)= \sum_{i=1}^d \lambda_i$.
\end{proof}

\begin{lemma}\label{density} For the homogeneous Gaussian random field ${f}(t)$ in Theorem \ref{ThmG},
let $h(w,\tilde{y},\tilde{z})$ be the density of $(f(\tau)$,
$\partial{f}(\tau)$, $\partial^2 {f}(\tau))$ evaluated at
$(w,\tilde{y},\tilde{z})$. Then,
\begin{eqnarray}
~~~~~~~h(w,\tilde{y},\tilde{z})&=&
\frac{|\Gamma|^{-\frac{1}{2}}}{(2\pi)^\frac{(d+1)(d+2)}{4}}\exp\biggr\{
-\frac{1}{2}\biggr[\tilde{y}^{\top
}\tilde{y}+\frac{(w-\mu_{20}\mu_{22}^{-1}\tilde{z})^2}{1-\mu_{20}\mu_{22}^{-1}\mu_{02}}+\tilde{z}^{\top
}\mu_{22}^{-1}\tilde{z}\biggr]\biggr\},
\end{eqnarray}
where $\Gamma$ is the covariance matrix of  $({f}(\tau),
\partial^2 {f}(\tau))$ whose inverse is
\begin{equation}\label{inverse}
 \Gamma^{-1}=\left(
\begin{array}{cc}
 \frac{1}{1-\mu_{20}\mu_{22}^{-1}\mu_{02}} & \frac{-\mu_{20}\mu_{22}^{-1}}{1-\mu_{20}\mu_{22}^{-1}\mu_{02}}\\
 \frac{-\mu_{20}\mu_{22}^{-1}}{1-\mu_{20}\mu_{22}^{-1}\mu_{02}} & \mu_{22}^{-1}+\frac{\mu_{22}^{-1}\mu_{02}\mu_{20}\mu_{22}^{-1}}{1-\mu_{20}\mu_{22}^{-1}\mu_{02}}
\end{array}%
\right).
\end{equation}
\end{lemma}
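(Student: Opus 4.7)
The plan is to reduce everything to a single application of the multivariate normal density formula, together with a block matrix inversion. By homogeneity of $f$, we may (and do) take $\tau=0$ without loss of generality, so that the joint law of $(f(\tau),\partial f(\tau),\partial^{2}f(\tau))$ is the one read off from the $(f(0),\partial^{2}f(0),\partial f(0),f(t))$ covariance displayed right after the definition of $\Gamma$ in Section~\ref{SecMain}.

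First I would observe that the cross-covariances between $\partial f(\tau)$ and each of $f(\tau)$, $\partial^{2}f(\tau)$ vanish (the $0$ blocks in the displayed $4\times 4$ matrix), so jointly Gaussian entries that are uncorrelated are in fact independent. Consequently the density factors:
\[
h(w,\tilde y,\tilde z)=h_{1}(\tilde y)\, h_{2}(w,\tilde z),
\]
where $\partial f(\tau)\sim N(0,I_{d})$ gives
$h_{1}(\tilde y)=(2\pi)^{-d/2}\exp\{-\tfrac{1}{2}\tilde y^{\top}\tilde y\}$,
and $(f(\tau),\partial^{2}f(\tau))$ is a mean-zero Gaussian vector of dimension $1+d(d+1)/2$ with covariance matrix $\Gamma$ as in \eqref{gamma}. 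The standard Gaussian density then yields
\[
h_{2}(w,\tilde z)=(2\pi)^{-(1+d(d+1)/2)/2}|\Gamma|^{-1/2}\exp\Big\{-\tfrac{1}{2}(w,\tilde z)\,\Gamma^{-1}(w,\tilde z)^{\top}\Big\}.
\]
Multiplying and using $\tfrac{d}{2}+\tfrac{1+d(d+1)/2}{2}=\tfrac{(d+1)(d+2)}{4}$ gives the normalizing constant $|\Gamma|^{-1/2}(2\pi)^{-(d+1)(d+2)/4}$ asserted in the lemma.

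Next I would derive the explicit form \eqref{inverse} by a Schur-complement block inversion. Writing $\Gamma=\bigl(\begin{smallmatrix}1 & \mu_{20}\\ \mu_{02} & \mu_{22}\end{smallmatrix}\bigr)$ and using the Schur complement $s\definedas 1-\mu_{20}\mu_{22}^{-1}\mu_{02}$ (which is positive since $\Gamma$ is positive definite), the standard block-inverse formula gives
\[
\Gamma^{-1}=\begin{pmatrix} s^{-1} & -s^{-1}\mu_{20}\mu_{22}^{-1}\\ -\mu_{22}^{-1}\mu_{02}s^{-1} & \mu_{22}^{-1}+\mu_{22}^{-1}\mu_{02}s^{-1}\mu_{20}\mu_{22}^{-1}\end{pmatrix},
\]
which is precisely \eqref{inverse}. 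Substituting into the quadratic form and grouping terms,
\[
(w,\tilde z)\,\Gamma^{-1}(w,\tilde z)^{\top}=\frac{w^{2}-2w\,\mu_{20}\mu_{22}^{-1}\tilde z+(\mu_{20}\mu_{22}^{-1}\tilde z)^{2}}{1-\mu_{20}\mu_{22}^{-1}\mu_{02}}+\tilde z^{\top}\mu_{22}^{-1}\tilde z=\frac{(w-\mu_{20}\mu_{22}^{-1}\tilde z)^{2}}{1-\mu_{20}\mu_{22}^{-1}\mu_{02}}+\tilde z^{\top}\mu_{22}^{-1}\tilde z,
\]
which, combined with the $\tilde y^{\top}\tilde y$ term from $h_{1}$, is exactly the exponent in the statement.

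There is no real obstacle here: the only content is (i) identifying the independence of $\partial f(\tau)$ from $(f(\tau),\partial^{2}f(\tau))$ from the covariance structure, and (ii) carrying out the block inversion cleanly. The mildly fiddly step is keeping the arrangement of the $d(d+1)/2$ second-order components consistent with the ordering convention used to define $\mu_{2}(t)$ and $\mu_{22}$ earlier in Section~\ref{SecMain}, so that $\tilde z$ is the vectorization of the symmetric matrix $\Delta f(\tau)$ in the same order. Once that bookkeeping is fixed, the identity above holds verbatim and the lemma follows.
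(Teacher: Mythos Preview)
Your proof is correct and follows essentially the same approach as the paper's own proof: the paper also invokes the block matrix inverse formula to obtain \eqref{inverse} and then writes the density as the multivariate Gaussian density with block-diagonal precision $\bigl(\begin{smallmatrix}\Gamma^{-1}&0\\0&I\end{smallmatrix}\bigr)$, plugging in $\Gamma^{-1}$ to arrive at the stated exponent. Your version is simply more explicit about the factorization, the dimension count for the normalizing constant, and the Schur-complement algebra that collapses the quadratic form into $(w-\mu_{20}\mu_{22}^{-1}\tilde z)^2/s+\tilde z^{\top}\mu_{22}^{-1}\tilde z$.
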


\begin{proof}[Proof of Lemma \ref{density}]
The form of (\ref{inverse}) results from direct application of the
block matric inverse of linear algebra. Note that
$$h(w,\tilde{y},\tilde{z})=\frac{1}{(2\pi)^\frac{(d+1)(d+2)}{4}}|\Gamma|^{-\frac{1}{2}}\exp\left\{-\frac{1}{2}(w, \tilde{z}^{\top }, \tilde{y}^{\top})
\left({\Gamma^{-1} \atop 0}\; {0 \atop I}\right)(w,
\tilde{z}^{\top}, \tilde{y}^{\top })^{\top }\right\}.$$ By plugging
in the form of $\Gamma^{-1}$, we get the conclusion.
\end{proof}

\begin{lemma}
\label{LemGamma} Consider that $\{t:|t-\tau |\leq u^{-1/2+\delta ^{\prime }}\}\subset T$. Let $\gamma _{u}(x)$ be as defined in
(\ref{gammaind}). There
exists some $\delta ^{\ast }>0$ such that for all $0<x<u^{3/2+\varepsilon }$%
,
\begin{equation*}
\gamma _{u}(-x)\leq e^{-u^{\delta ^{\ast }}x}.
\end{equation*}
\end{lemma}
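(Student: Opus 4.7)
The plan is to reduce the bound on $\gamma_u(-x)$ to a Gaussian supremum tail for the remainder field $g$, and then apply the Borel-TIS inequality (Lemma \ref{LemBorel}) at two different scales: the small ball around $\tau$ (to control the probability of $\{\xi_u<-x\}$) and the global scale on $T$ (to control the denominator of the integrand).

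First I would unpack the indicator event in \eqref{gammaind}. By \eqref{xi} and the monotonicity of $\log$, the event $\{\xi_u<-x\}$ is equivalent to
$$E^{\tilde S}\bigl[\exp(\sigma g((uI-\zz)^{-1/2}\tilde S))\bigr]\ >\ e^{x/u},$$
where the expectation is taken against the density \eqref{den} of $\tilde S$, which does not involve $g$. Splitting $E^{\tilde S}$ according to $|\tilde S|\le u^{\delta}$ versus $|\tilde S|>u^{\delta}$ as in the proof of Lemma \ref{LemRemainder}, and using that on $\mathcal L$ the operator norm of $(uI-\zz)^{-1/2}$ is $(1+o(1))u^{-1/2}$, one shows that, up to a density-tail contribution of order $\exp(-cu^{2\delta})$, this event forces
$$\sup_{|t|\le u^{-1/2+\delta}} g(t)\ \ge\ \frac{x}{\sigma u}\bigl(1-o(1)\bigr).$$

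Next I would control the integrand $1/\bigl(\kappa+O(e^{-\delta^*u^{1+2\delta}}e^{u\sup|g|})\bigr)$ by partitioning on the size of $\sup_T|g|$. On $\{\sup_T|g|\le u^{2\delta}\}$ the correction term in the denominator is $o(1)$ and the integrand is bounded by $2/\kappa$; on the complement, applying Lemma \ref{LemBorel} to $g$ over the compact set $T$ (where the variance and expected supremum are bounded) yields $P(\sup_T|g|>u^{2\delta})\le \exp(-cu^{4\delta})$, which decays faster than $e^{-u^{\delta^*}x}$ for any $x$ in the stated range and $\delta^*$ sufficiently small. The core estimate is then Lemma \ref{LemBorel} on the small ball: since $\Var(g(t))=O(|t|^6)$ as $t\to 0$, the variance over $\{|t|\le u^{-1/2+\delta}\}$ is $O(u^{-3+6\delta})$ and the expected supremum is of strictly lower order, giving
$$P\Bigl(\sup_{|t|\le u^{-1/2+\delta}} g(t)\ \ge\ \frac{x}{\sigma u}\Bigr)\ \le\ \exp\bigl(-cx^{2}u^{1-6\delta}\bigr).$$
Choosing $\delta^*<1-6\delta$, the exponent $cx^{2}u^{1-6\delta}$ dominates $u^{\delta^*}x$ for $x$ above a mild threshold, which delivers the stated bound; for $x$ below that threshold the target $e^{-u^{\delta^*}x}$ is not yet small, so the a priori bound $\gamma_u\le 2/\kappa$ closes the gap after shrinking $\delta^*$ further.

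The main obstacle will be coordinating the family of small parameters $\delta',\delta,\delta^*,\varepsilon$ so that all of the above estimates hold simultaneously across the full range $0<x<u^{3/2+\varepsilon}$, and, relatedly, ensuring that the correction term $O(e^{-\delta^*u^{1+2\delta}}e^{u\sup|g|})$ never drives the denominator close to zero off a set of probability much smaller than $e^{-u^{\delta^*}x}$. This is precisely why the partition on $\sup_T|g|$ is performed at the scale $u^{2\delta}$ rather than attempting a uniform pointwise bound on the integrand, and it is the only step where the two scales of Borel-TIS (global on $T$ and local on a $u^{-1/2+\delta}$-ball) must be reconciled.
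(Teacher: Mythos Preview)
Your main-term analysis via Borel--TIS on the small ball $\{|t|\le u^{-1/2+\delta}\}$ matches the paper's, but your treatment of the complement set has a genuine gap that cannot be repaired within your framework.

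\textbf{The complement probability is too weak.} You partition on $\{\sup_T|g|\le u^{2\delta}\}$ and claim $P(\sup_T|g|>u^{2\delta})\le e^{-cu^{4\delta}}$ ``decays faster than $e^{-u^{\delta^*}x}$ for any $x$ in the stated range.'' This is false: for $x$ near the upper end $u^{3/2+\varepsilon}$ you would need $cu^{4\delta}\ge u^{3/2+\varepsilon+\delta^*}$, i.e.\ $4\delta>3/2$, which contradicts $\delta$ small. No choice of threshold fixes this: raising it to $u^{\alpha}$ with $\alpha$ large enough to get an $e^{-cu^2}$ tail would make the correction term $e^{-\delta^*u^{1+2\delta}}e^{u\cdot u^\alpha}$ explode on the ``good'' set.

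\textbf{The integrand is not controlled.} On the complement you only bound the probability, not $E[\kappa^{-1};\,\text{complement}]$; but $\kappa^{-1}$ can be exponentially large in $u$ there. Even on your good set, ``the integrand is bounded by $2/\kappa$'' is not a deterministic bound: $\kappa$ itself contains the factor $E^{S'}\!\bigl[e^{(\tilde u+\zeta_u)g(\cdot)}\bigr]$, and the global bound $\sup_T|g|\le u^{2\delta}$ only gives $\kappa\ge c\,e^{-u^{1+2\delta}}$, which is useless.

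The paper avoids both obstacles by choosing a different bad event: not $\{\sup_T|g|\ \text{large}\}$ but $\bigl\{A_2\ge e^{-\lambda u^{2\delta}}\bigr\}$, equivalently $\bigl\{\sup_{s\in T,\,|s|>u^{-1/2+\delta}}g(s)-\tfrac{u}{2}|s|^2>-\lambda u^{2\delta}\bigr\}$. Because $g$ must overcome the quadratic drift $\tfrac{u}{2}|s|^2$ while $\Var g(s)=O(|s|^6)$, this event has probability $\le e^{-\lambda' u^2}$, which \emph{does} beat $e^{-u^{\delta^*}x}$ across the full range. The paper then shows, via a conditional argument exploiting the cubic structure $g(t)=\sum Z_{ijk}t_it_jt_k+o_p(|t|^3)$, that conditionally on this tilted excursion one still has $\kappa\ge e^{-O(u)}$; hence $e^{-\lambda' u^2}\cdot e^{O(u)}\le e^{-u^{\delta^*}x}$. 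On the complementary (good) event, the paper bounds $\kappa^{-1}\le \exp\{O(u\sup_{|s|\le u^{-1/2+\delta}}|g(s)|)\}$ and combines this with the same small-ball Borel--TIS estimate used for the indicator, rather than trying to bound $\kappa^{-1}$ by a constant. Both of these ideas---the drift-relative bad event and the conditional lower bound on $\kappa$---are missing from your outline and are essential.
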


\begin{lemma}\label{LemBoundary}
For each that $\tau$ within $u^{-1/2 + \delta'}$ distance from the boundary of $T$, that is, there exists an $s$ such that $|s-\tau |\leq u^{-1/2+\delta'}$ and $s\notin T$, we have that
\begin{eqnarray}
E_{\tau }^{Q}\left[ \frac{dP}{dQ};\int_{T}e^{\mu (t)+\sigma f(t)}dt>b,%
\mathcal{L}_{Q}\right]   =O(1)u^{d-1}e^{-\frac{1}{2}(u-\mu _{\sigma }(\tau ))^{2}}.
\notag
\end{eqnarray}
\end{lemma}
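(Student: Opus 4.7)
The plan is to rerun the three-step argument of Theorem~\ref{ThmG}, keeping track only of the places where the domain $T$ enters the analysis. The change-of-measure representation, the localization to $\mathcal L_Q$ afforded by Lemma~\ref{LemLocal}, the Taylor expansions in \eqref{expf}--\eqref{expmu}, the negligibility of $I_3$ from Lemma~\ref{LemI4}, and the remainder estimate Lemma~\ref{LemRemainder} are all uniform in $\tau\in T$ and so transfer verbatim. Hence I reduce again to controlling
$$e^{u^2/2}\,E\!\left[K^{-1};\mathcal E_b,\mathcal L\right]$$
when $\tau$ lies within $u^{-1/2+\delta'}$ of $\partial T$.

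The only steps sensitive to the position of $\tau$ are the two Gaussian integrations whose domain is $\{t\in T\}$: the one in Part~1 that governs whether $\mathcal E_b$ holds, and the one in Part~2 defining $K$. After the change of variables $s=(uI-\mathbf z)^{1/2}(t-\tau)$, both integrals are over
$$D_\tau \;:=\; \left\{s\in\mathbb R^d\;:\;\tau+(uI-\mathbf z)^{-1/2}s\in T\right\}.$$
By the piecewise-smoothness assumption~C3, $T$ satisfies a uniform interior cone condition, so there exist constants $c_0\in(0,1]$ and $R_0>0$, depending only on $T$, such that $|D_\tau\cap B(0,R)|\ge c_0\,|B(0,R)|$ for every $R\le R_0$ and every $\tau$ in the boundary strip, uniformly in $u$. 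On $\mathcal L$ the rescaling satisfies $(uI-\mathbf z)^{-1/2}=u^{-1/2}(I+O(u^{-1/2+\varepsilon}))$, so the cone condition for $T$ transfers to $D_\tau$ with only an $o(1)$ distortion. Consequently the Gaussian integral in Part~1 is still bounded above by its full-space value $(2\pi/\sigma)^{d/2}$, which forces $\mathcal E_b\subset\{A>-C\}$ for a fixed constant $C$; and in Part~2, the corresponding integral is bounded below by $c_0$ times the interior expression, giving $K^{-1}\le c_0^{-1}\times(\text{interior bound on }K^{-1})$ on $\mathcal L$.

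Substituting these two bounds into the chain of identities in Part~3, the remaining integral over $d\tilde A\,d\tilde y\,d\tilde z$ converges by dominated convergence (using Lemma~\ref{LemGamma} to control $\gamma_u(\tilde A)$ for $\tilde A<0$ and the uniform boundedness of $\gamma_u$ for $\tilde A>0$) to a finite constant. This yields
$$\Lambda(\tau)\;\le\;O(1)\,u^{d-1}\exp\!\left\{-\tfrac12(u-\mu_\sigma(\tau))^2\right\},$$
with the implied constant independent of $\tau$ and $u$. The main obstacle will be verifying that the geometric constant $c_0$ is genuinely uniform in $\tau$ across the boundary strip, because the relevant scaling is both $\tau$- and $\mathbf z$-dependent; this is handled by the perturbation argument above, which reduces the issue to the standard uniform interior cone estimate for the fixed piecewise-smooth domain $T$. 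Once this uniformity is in hand, every other step is a direct adaptation of the interior calculation and no new exponential loss is incurred, yielding the stated $O(1)u^{d-1}e^{-(u-\mu_\sigma(\tau))^2/2}$ bound.
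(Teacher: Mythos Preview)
Your interior-cone argument does not deliver the lower bound you claim for the Part~2 integral, and this is where the approach breaks down. The Gaussian integral in question is
\[
\int_{D_\tau}\exp\Bigl\{-\tfrac12\bigl|s-(\tilde u+\zeta_u)^{1/2}(\tilde uI-\tilde{\mathbf z})^{-1/2}\tilde y\bigr|^2\Bigr\}\,ds,
\]
which is concentrated near its \emph{center} $\approx\tilde y$, not near the origin of $D_\tau$. Your cone estimate $|D_\tau\cap B(0,R)|\ge c_0|B(0,R)|$ controls the domain only in a neighbourhood of $0$; it says nothing about the Gaussian mass near $\tilde y$. On $\mathcal L$ one has $|\tilde y|\le u^{1/2+\varepsilon}$, so when $\tau+(\tilde uI-\tilde{\mathbf z})^{-1}\tilde y\notin T$ (which does occur for a set of $\tilde y$ of positive volume in the boundary strip) the integral is of order $|\tilde y|^{-1}e^{-|\tilde y|^2/2}$, hence as small as $e^{-\Theta(u^{1+2\varepsilon})}$. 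Thus $K^{-1}$ is \emph{not} bounded by a constant multiple of the interior expression, and the dominated-convergence step in Part~3 fails as written. (Note also that Lemma~\ref{LemGamma} is stated under the interior hypothesis $\{t:|t-\tau|\le u^{-1/2+\delta'}\}\subset T$, so it cannot be invoked here without further justification.)

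The paper repairs this by splitting on whether the center lies in $T$ or not. Set $R_1=\{\tilde y:\tau+(\tilde uI-\tilde{\mathbf z})^{-1}\tilde y\in T\}$ and $R_2=R_1^{\,c}$. On $R_1$ your cone idea is essentially correct: the integral is bounded below by some $\delta_0>0$, the Part~1 integral is $\le(2\pi/\sigma)^{d/2}$, and the interior calculation gives the $O(1)u^{d-1}e^{-(u-\mu_\sigma(\tau))^2/2}$ bound. On $R_2$ two compensating effects are used simultaneously: (i) the lower bound $K\gtrsim |\tilde y|^{-1}e^{-|\tilde y|^2/2}\times(\text{interior }K)$, so that the extra factor $|\tilde y|e^{|\tilde y|^2/2}$ in $K^{-1}$ \emph{exactly} cancels the $e^{-|\tilde y-\partial\mu_\sigma(\tau)|^2/2}$ in the density $h(w,\tilde y,\tilde z)$ up to terms $O(u^{1/2+2\delta})$; and (ii) the same geometric obstruction forces the Part~1 integral to be bounded away from $(2\pi/\sigma)^{d/2}$, so that $\mathcal E_b$ now requires $A\ge c_0>0$ rather than merely $A>-C$. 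The factor $e^{-\tilde uA/\sigma}\le e^{-c_0\tilde u/(2\sigma)}$ then kills the remaining growth. Without pairing the loss in $K^{-1}$ with the gain in the event, the $R_2$ contribution cannot be controlled.
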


\begin{supplement}[id=suppA]
  \stitle{Proofs of several lemmas in Section \ref{SecLem} and the numerical results}
  \slink[doi]{.}
  \sdescription{This supplement contains proofs of Lemmas \protect\ref{LemLocal}, \protect\ref{LemExp}, \protect\ref{LemI4}, \protect\ref{LemGamma}, and \ref{LemBoundary} as well as  numerical results.}
\end{supplement}

\bibliographystyle{plain}
\bibliography{bibstat,bibprob,RefGrant}

\newpage
 \setcounter{page}{1} 
\center{\bf\Large{Supplement to Paper ``Some Asymptotic Results of Gaussian Random Fields with Varying Mean Functions and the Associated Processes''\\}}

\center{\large{By Jingchen Liu*  and Gongjun Xu\\}}

\center{Columbia University\\}
\footnotetext[1]{*Research supported in part by Institute of Education Sciences, through Grant R305D100017, NSF CMMI-1069064, and NSF SES-1123698}

\bigskip 
\quote{ This supplement contains proofs of Lemmas \protect\ref{LemLocal}, \protect\ref{LemExp}, \protect\ref{LemI4}, \protect\ref{LemGamma} and \ref{LemBoundary} and a subsection containing numerical results to illustrate the approximation results in Section \ref{SecMain}.\\}

\bigskip 
\centerline{\large{\bf A: Proofs of Lemmas}}

\begin{proof}[Proof of Lemma \protect\ref{LemLocal}]
Note that
\begin{equation*}
\frac{1}{mes(T)}\int_{T}e^{\sigma \left( \mu _{\sigma }(t)+f(t)\right) }dt>
\frac{1}{mes(T)}b
\end{equation*}
implies that for large $b$,
\begin{eqnarray*}
&&\frac{1}{mes(T)}\int_{T}\exp \left\{ (u-\mu _{\sigma }(t))\cdot
(f(t)+\mu
_{\sigma }(t))+\frac{1}{2}\mu^{2}_{\sigma }(t)\right\} dt \\
&\geq & \exp \left\{ \frac{1}{2}\min_{t\in T}\mu^{2}_{\sigma
}(t)\right\}\frac{1}{mes(T)}\biggr[\int_{T\cap\{f(t)+\mu_\sigma(t)\geq
0\}}\exp \left\{ \left(u-\max_{t\in T}\mu_{\sigma
}(t)\right)(f(t)+\mu _{\sigma
}(t))\right\} dt\\
&&+\int_{T\cap\{f(t)+\mu_\sigma(t)< 0\}}\exp \left\{
\left(u-\min_{t\in T}\mu_{\sigma}(t)\right)
(f(t)+\mu _{\sigma }(t))\right\} dt\biggr] \\
&\geq &\frac{1}{2}\exp \left\{ \frac{1}{2}\min_{t\in
T}\mu^{2}_{\sigma }(t)\right\} \left[\frac{1}{mes(T)}\int_{T}\exp
\left\{ \left(u-\max_{t\in T}\mu_{\sigma
}(t)\right)(f(t)+\mu _{\sigma }(t))\right\} dt\right] \\
\end{eqnarray*}
By Jensen's inequality, the above display is
\begin{eqnarray*}
&\geq &\frac{1}{2}\exp \left\{ \frac{1}{2}\min_{t\in T}\mu^{2}_{\sigma
}(t)\right\} \left[ \frac{1}{mes(T)}b\right]
^{\left(u-\max_{t\in T}\mu_{\sigma
}(t)\right)/\sigma } \\
&\geq &\exp \left\{ u^{2}-c_{1}u\log u-c_{2}\right\},
\end{eqnarray*}
where $c_1$ and $c_2$ are some constants. The last step of the above display uses the fact that $u - \log b  = O(\log u)$.

Then for any $\varepsilon>0$,
\begin{eqnarray*}
&&E_{\tau }^{Q}\left[ \frac{1}{\int_T \exp \left\{ (u-\mu _{\sigma
}(t))(f(t)+\mu _{\sigma }(t))+\frac{1}{2}\mu _{\sigma
}^{2}(t)\right\} dt} ;\int_{T}e^{\mu (t)+\sigma f(t)}dt>b,|f(\tau
)-u|>u^{1/2+\varepsilon }\right]
\\
&=& \frac{1}{mes(T)}E_{\tau }^{Q}\biggr[\frac{1}{\frac{1}{mes(T)}
\int_{T}\exp \left( (u-\mu _{\sigma }(t))\cdot (f(t)+\mu _{\sigma
}(t))+
\frac{1}{2}\mu^2 _{\sigma }(t)\right) dt}; \\
&&\frac{1}{mes(T)}\int_{T}e^{\mu (t)+\sigma f(t)}dt>\frac{1}{mes(T)}
b,|f(\tau )-u|>u^{1/2+\varepsilon }\biggr] \\
&\leq &\exp \left\{ -u^{2}+c_{1}u\log u+c_{2}\right\} Q\left(
|f(\tau )-u|>u^{1/2+\varepsilon }\Big|\tau \right) \\
&=&o(1)e^{-u^{2}-u^{1+\lambda }}.
\end{eqnarray*}
Similarly, we can get
\begin{equation*}
E_{\tau }^{Q}\left[ \frac{1}{\int_T \exp \left\{ (u-\mu _{\sigma
}(t))(f(t)+\mu _{\sigma }(t))+\frac{1}{2}\mu _{\sigma
}^{2}(t)\right\} dt};\int_{T}e^{\mu (t)+\sigma
f(t)}dt>b,\mathcal{L}_{Q}^{c}\right] =o(1)e^{- u^{2}-u^{1+\lambda
}}.
\end{equation*}
\end{proof}

\begin{proof}[Proof of Lemma \protect\ref{LemExp}]
For $\delta >\varepsilon $, we first split the expectation into two
parts:
\begin{eqnarray*}
&&E\left[\exp \left\{ \sigma
g_{3}\left((uI-\zz)^{-\frac{1}{2}}S\right)+\sigma (u-\mu _{\sigma
}(\tau ))C_{4}\left((uI-\zz)^{-\frac{1}{2}}S\right)+\sigma R\left((uI-\zz)^{-\frac{1}{2}}S\right)%
\right\}\right]\\
&=&E\left[ ...;|S|\leq u^{\delta }\right] +E\left[ ...;|S|>u^{\delta
}\right] \\
&=&J_{1}+J_{2}.
\end{eqnarray*}%

Let $t=(uI-\zz)^{-\frac{1}{2}}s$. Given that $C(t)$ is a monotone
non-increasing function, we have that for all $|s|= O(u^{\delta })$
\begin{eqnarray*}
&&-\frac{\sigma }{2}\left(s-(uI-\zz)^{-1/2}y\right)^{\top }\left(s-(uI-\zz)^{-1/2}y\right) \\
&&+\sigma g_{3}\left((uI-\zz)^{-\frac{1}{2}}s\right)+\sigma (u-\mu
_{\sigma }(\tau
))C_{4}\left((uI-\zz)^{-\frac{1}{2}}s\right)+\sigma R\left((uI-\zz)^{-\frac{1}{2}}s\right) \\
&=&-\frac{\sigma }{2}\left(t-(uI-\zz)^{-/2}y\right)^{\top }(uI-\zz)\left(t-(uI-\zz)^{-1/2}y\right) \\
&&+\sigma g_{3}(t)+\sigma (u-\mu _{\sigma }(\tau ))C_{4}(t)+\sigma R(t) \\
&\leq &-\lambda u|t|^{2}=-\lambda s^{2}.
\end{eqnarray*}%
Therefore, for $\lambda'$ small
\begin{equation*}
J_{2}=O\left(e^{-\lambda'u^{2\delta }}\right).
\end{equation*}%

We now consider the leading term $J_{1}$. One the set that $|S|\leq
u^{\delta }$ and $|y|\leq u^{1/2+\varepsilon }$, we have that%
\begin{equation*}
\sigma g_{3}\left((uI-\zz)^{-\frac{1}{2}}S\right)+\sigma (u-\mu
_{\sigma }(\tau
))C_{4}\left((uI-\zz)^{-\frac{1}{2}}S\right)+\sigma R\left((uI-\zz)^{-\frac{1}{2}%
}S\right)=O\left(u^{-1+4\delta }\right).
\end{equation*}%
By using Taylor's expansion twice, we can essentially move the expectation
into the exponent and obtain that $\log J_{1}$ equals %
\begin{equation*}
E\left[ \sigma g_{3}\left((uI-\zz)^{-\frac{1}{2}}S\right)+\sigma
(u-\mu
_{\sigma }(\tau ))C_{4}\left((uI-\zz)^{-\frac{1}{2}}S\right)+\sigma R\left((uI-\zz)^{-\frac{1}{2}%
}S\right);|S|\leq u^{\delta }\right] +o\left(u^{-1}\right).
\end{equation*}%
Note that $(uI-\zz)^{-1/2}y=u^{-1/2}y+O(u^{-3/2}|y||z|)$ and
$y=\tilde{y}+O(1)$. Let $Z=(Z_{1},...,Z_{d})$ be a multivariate
Gaussian random vector with mean zero and covariance function
$\sigma ^{-1}I$. Then, $S$ is equal in distribution to $Z+u^{-1/2}y
+O(u^{-3/2}|y||z|)$. Therefore, we obtain that
\begin{eqnarray*}
&&\log J_{1}\\ &=&-\frac{\sigma +O(u^{-1+\varepsilon })}{6}u^{-3/2}\sum_{ijkl}%
\partial^4_{ijkl}C(0)E\left[
(u^{-1/2}y_{i}+Z_{i})(u^{-1/2}y_{j}+Z_{j})(u^{-1/2}y_{k}+Z_{k})\right]
\tilde{y}_{l} \\
&&+\frac{\sigma +O(u^{-1+\varepsilon
})}{24}u^{-1}\sum_{ijkl}\partial^4 _{ijkl}C(0)E\left[
(u^{-1/2}y_{i}+Z_{i})(u^{-1/2}y_{j}+Z_{j})(u^{-1/2}y_{k}+Z_{k})(u^{-1/2}y_{l}+Z_{l})%
\right] \\
&&+o\left(u^{-1}\right) \\
&=&-\frac{\sigma }{6}u^{-3/2}\sum_{ijkl}\partial^4
_{ijkl}C(0)E\left[
(u^{-1/2}y_{i}+Z_{i})(u^{-1/2}y_{j}+Z_{j})(u^{-1/2}y_{k}+Z_{k})\right]
y_{l}
\\
&&+\frac{\sigma }{24}u^{-1}\sum_{ijkl}\partial^4 _{ijkl}C(0)E\left[
(u^{-1/2}y_{i}+Z_{i})(u^{-1/2}y_{j}+Z_{j})(u^{-1/2}y_{k}+Z_{k})(u^{-1/2}y_{l}+Z_{l})%
\right] +o\left(u^{-1}\right),
\end{eqnarray*}
where the expectations are taken with respect to $Z$. Then
\begin{eqnarray*}
&&\log J_1\\
&=&-\frac{\sigma }{8u^{3}}\sum_{ijkl}\partial^4
_{ijkl}C(0)y_{i}y_{j}y_{k}y_{l}-\frac{1}{6}u^{-3/2}\sum_{iikl}3\partial^4
_{iikl}C(0)u^{-1/2}y_{k}y_{l} \\
&&+\frac{1}{24}u^{-1}\sum_{iikl}6\partial^4 _{iikl}C(0)u^{-1}y_{k}y_{l}+\frac{1%
}{8\sigma u}\sum_{iiii}\partial^4 _{iiii}C(0)+o\left(u^{-1}\right) \\
&=&-\frac{\sigma }{8u^{3}}\sum_{ijkl}\partial^4
_{ijkl}C(0)y_{i}y_{j}y_{k}y_{l}-\frac{1}{4u^{2}}\sum_{iikl}\partial^4
_{iikl}C(0)u^{-1/2}y_{k}y_{l}+\frac{1}{8\sigma
u}\sum_{iiii}\partial^4
_{iiii}C(0)+o\left(u^{-1}\right) \\
&=&-\frac{\sigma }{8u^{3}}Y^{\top }\mu
_{22}Y-\frac{1}{4u^{2}}Y^{\top }\mu
_{22}\mathbf 1+\frac{1}{8\sigma u}\sum_{iiii}\partial^4 _{iiii}C(0) \\
&=&-\frac{\sigma }{8u}(u^{-1}Y-\mathbf 1/\sigma )^{\top }\mu _{22}(u^{-1}Y-\mathbf 1/\sigma
)+\frac{1}{8\sigma u}\mathbf 1^{\top }\mu _{22}\mathbf 1+\frac{1}{8\sigma u}%
\sum_{iiii}\partial^4_{iiii}C(0)+o\left(u^{-1}\right).
\end{eqnarray*}
\end{proof}

\begin{proof}[Proof of Lemma \protect\ref{LemI4}]
Similar to the derivations of the leading term $I_{2}$ in Part 2 of Theorem \ref{ThmG} proof, such as \eqref{inter} and \eqref{cont}, we can obtain that
\begin{eqnarray*}
I_{3} &=&O(1)e^{u\sup |g(t)|}\det
(\tilde{u}I-z)^{-1/2}\tilde{u}^{-d/2}\\
&&\times\exp \left\{(\tilde{u}+\zeta _{u})\left(f(\tau
)+\frac{1}{2}\tilde{y}^{\top }(
\tilde{u}I-\tilde{\zz})^{-1}\tilde{y}\right)\right\}  \notag \\
&&\times
\int_{|\tilde{u}^{-1/2}(\tilde{u}I-\tilde{\mathbf{z}})^{-1/2}s|>\lambda
_{u}}\exp \left\{-\frac{1}{2}\Big\vert s-(\tilde{u}+\zeta _{u})^{1/2}(\tilde{u}%
I- \tilde{\mathbf{z}})^{-1/2}\tilde{y}\Big\vert ^{2}\right\}  \notag \\
&&\times \exp \biggr\{(\tilde{u}+\zeta
_{u})g_{3}\left((\tilde{u}+\zeta _{u})^{-1/2}(
\tilde{u}I-\tilde{\zz})^{-1/2}s\right)\notag\\
&&~~~~~~~~+\tilde{u}(\tilde{u}+\zeta _{u})C_{4}\left((\tilde{%
u }+\zeta
_{u})^{-1/2}(\tilde{u}I-\tilde{\zz})^{-1/2}s\right)\biggr\}ds \notag
\end{eqnarray*}
Note that $|y|\leq u^{1/2+\varepsilon }\ll u^{1/2+\delta }=u\lambda
_{u}$. Therefore, there exists a $\delta ^{\ast }>0$ such that the
integral on the right-hand-side of the above display is
\begin{eqnarray*}
&&\int_{|\tilde{u}^{-1/2}(\tilde{u}I-\tilde{\mathbf{z}})^{-1/2}s|>\lambda
_{u}}\exp \left\{-\frac{1}{2}\left\vert s-(\tilde{u}+\zeta
_{u})^{1/2}(\tilde{u}I-\tilde{\zz}
)^{-1/2}\tilde{y}\right\vert ^{2}\right\} \\
&&\times \exp \biggr\{(\tilde{u}+\zeta
_{u})g_{3}\left((\tilde{u}+\zeta _{u})^{-1/2}(
\tilde{u}I-\tilde{\zz})^{-1/2}s\right)\notag\\
&&~~~~~~~~+\tilde{u}(\tilde{u}+\zeta _{u})C_{4}\left((\tilde{%
u }+\zeta _{u})^{-1/2}(\tilde{u}I-\tilde{\zz})^{-1/2}s\right)\biggr\}ds \\
&=&O\left(e^{-\delta ^{\ast }u^{1+2\delta }}\right).
\end{eqnarray*}
\end{proof}

\begin{proof}[Proof of Lemma \protect\ref{LemGamma}]
To simplify the notation, let $\tau =0$. For other values of $\tau
$, the derivation is completely analogous. Recall that
\begin{eqnarray*}
\gamma _{u}(-x) &=&E\left[ \left. \frac{1}{\kappa +O(e^{-\delta
^{\ast }u^{1+2\delta }}e^{u\sup |g(t)|})};-x>\xi _{u}\right\vert
f(\tau ),\partial
f(\tau ),\partial ^{2}f(\tau )\right] \\
&\leq &E\left[ \kappa ^{-1};-x>\xi _{u}|f(\tau ),\partial f(\tau
),\partial ^{2}f(\tau )\right] .
\end{eqnarray*}%
We use the notation $E_{w,\tilde{y},\tilde{z}}[\cdot ]$ to denote
$E\left[\cdot |f(\tau )=w,\partial f(\tau )=\tilde{y},\partial
^{2}f(\tau )=\tilde{z}\right]$ and $P_{w,\tilde{y},\tilde{z}}$ to
denote the conditional probability.

Note that
\begin{equation*}
e^{-\xi _{u}/u}=E\left[ \exp \left\{
g\left((uI-\zz)^{-1/2}\tilde{S}\right)\right\}\right],
\end{equation*}%
where $\tilde{S}$ is a random variable  with density
proportional to (\ref{den}). Therefore,
\begin{equation*}
-x>\xi _{u},
\end{equation*}%
if and only if
\begin{equation*}
A_{1}+A_{2}>e^{x/u},
\end{equation*}%
where
\begin{eqnarray*}
A_{1}
&=&E\left[\exp\left\{g\left((uI-\zz)^{-1/2}\tilde{S}\right)\right\};
|\tilde{S}|\leq u^{\delta}\right], \\
A_{2}
&=&E\left[\exp\left\{g\left((uI-\zz)^{-1/2}\tilde{S}\right)\right\};
|\tilde{S}|>u^{\delta},(uI-\zz)^{-1/2}\tilde{S}\in T\right].
\end{eqnarray*}%
Furthermore, we have that
\begin{equation*}
\log A_{1}\leq \sup_{|s|\leq u^{-1/2+\delta }}g(s),
\end{equation*}%
and for $\lambda $ and $\lambda'$ sufficiently small
\begin{eqnarray*}
P_{w,\tilde{y},\tilde{z}}\Big(A_{2} &>&e^{-\lambda u^{2\delta }}\Big) \\
&\leq &P_{w,\tilde{y},\tilde{z}}\left( \sup_{u^{\delta }\leq s\leq \sqrt{u}%
}g(s/\sqrt{u})-\frac{1}{2}s^{2}>-\lambda u^{2\delta }\right) \leq
e^{-\lambda' u^{2}}\leq e^{-\lambda' u^{1/2-\varepsilon }x},
\end{eqnarray*}%
for all $0<x<u^{3/2+\varepsilon }$. Therefore,
\begin{eqnarray*}
&&E_{w,\tilde{y},\tilde{z}}\left[ \kappa
^{-1};~A_{1}+A_{2}>e^{x/u},0<A_{2}<e^{-\lambda u^{2\delta }}\right] \\
&=&E_{w,\tilde{y},\tilde{z}}\left[ \kappa ^{-1};~u\cdot
\sup_{|s|\leq
u^{-1/2+\delta }}g(s)+O\left(u\cdot e^{-\lambda u^{2\delta }}\right)>x\right] \\
&\leq &E_{w,\tilde{y},\tilde{z}}\left[ \exp\left\{ O\left(u\cdot
\sup_{|s|\leq u^{-1/2+\delta }}|g(s)|\right)\right\};~u\cdot
\sup_{|s|\leq u^{-1/2+\delta }}g(s)+O\left(u\cdot e^{-\lambda
u^{2\delta }}\right)>x\right].
\end{eqnarray*}%
By the Borell-TIS inequality (Lemma \ref{LemBorel}), we have that
\begin{equation*}
P\left( u\cdot \sup_{|s|\leq u^{-1/2+\delta }}g(s)>x\right) \leq e^{-\lambda''
u^{1-6\delta }x^{2}}.
\end{equation*}%
Therefore, with $\delta ^{\ast }$ small enough, we have that
\begin{equation*}
E_{w,\tilde{y},\tilde{z}}\left[ \kappa
^{-1};A_{1}+A_{2}>e^{x/u},0<A_{2}<e^{-\lambda u^{2\delta }}\right]
\leq e^{-\delta ^{\ast }u^{1-6\delta }x^{2}}.
\end{equation*}%

For the remainder term,
\begin{eqnarray*}
&&E_{w,\tilde{y},\tilde{z}}\left[ \kappa
^{-1};~A_{1}+A_{2}>e^{x/u},A_{2}\geq
e^{-\lambda u^{2\delta }}\right] \\
&\leq &E_{w,\tilde{y},\tilde{z}}\left[ \kappa ^{-1};~A_{2}\geq
e^{-\lambda
u^{2\delta }}\right] \\
&\leq &E_{w,\tilde{y},\tilde{z}}\left[ \kappa ^{-1};\sup_{u^{\delta
}\leq
s\leq \sqrt{u}}g(s/\sqrt{u})-\frac{1}{2}s^{2}>-\lambda u^{2\delta }\right] \\
&\leq &e^{-\lambda' u^{2}} E_{w,\tilde{y},\tilde{z}}\left[ \kappa
^{-1}\biggr|\sup_{u^{\delta }\leq s\leq
\sqrt{u}}g(s/\sqrt{u})-\frac{1}{2}s^{2}>-\lambda u^{2\delta }\right]
\\
&=&e^{-\lambda' u^{2}}E_{w,\tilde{y},\tilde{z}}\left[ \kappa
^{-1}\biggr|\sup_{s\in T,|s|>u^{-1/2+\delta
}}g(s)-\frac{u}{2}s^{2}>-\lambda u^{2\delta }\right].
\end{eqnarray*}%
Let $s_{\ast }$ be the global maximum of $g(s)-\frac{u}{2}s^{2}$ in
the region $\left\{s: s\in T \hbox{ and } |s|>u^{-1/2+\delta
}\right\}$. In what follows, we provide an
upper bound of $E\left[ \kappa ^{-1}\big|\sup_{s\in T,|s|>u^{-1/2+\delta }}g(s)-\frac{u}{2}s^{2}>-\lambda u^{2\delta }\right] .$ Note that for each
$s_{\ast }\in T,|s_{\ast }|>u^{-1/2+\delta }$, we proceed to
describing the conditional distribution of $g(t)$ given $g(s_{\ast
})-\frac{u}{2}s_{\ast }^{2}>-\lambda u^{2\delta }$. Recall that
$g(t)$ is the remainder of the $f(t)$ after the first two order
expansion. Therefore, we have
\begin{equation*}
g(t)=\sum_{i,j,k}Z_{i,j,k}t_{i}t_{j}t_{k}+o_{p}(|t|^{3}),
\end{equation*}%
where $Z_{i,j,k}$ jointly follows a multivariate Gaussian random
distribution with  mean zero and nondegenerate covariance. Then,
 $g(t)$ approximately takes a form of a cubic polynomial.
Therefore, given $g(s_{\ast })-\frac{u}{2}s_{\ast }^{2}>-\lambda
u^{2\delta }$, we have approximation
\begin{equation*}
g(t)=O_p\left(u|s_{\ast }|^{2}\frac{|t|^{3}}{|s_{\ast
}|^{3}}\right)=O_p\left(u|t|^{3}|s_{\ast }|^{-1}\right).
\end{equation*}%
We write $X_n= O(a_n)$ if $M_n(\theta) = E(e^{\theta X_n/a_n})$ in bounded uniformly of $\theta$ in a domain around zero. The above result can be obtained by repeatedly using the Borell-TIS inequality (Lemma \ref{LemBorel}) and we omit the details. Thus, for $|\tilde{y}|\leq u^{1/2+\varepsilon }$ and given $g(s_{\ast})-\frac{u}{2}s_{\ast }^{2}>-\lambda u^{2\delta} $, we have the approximation
\begin{equation*}
ug\left(u^{-1}\tilde{y}\right)=O_p\left(u^{1+3\varepsilon
-\delta}\right).
\end{equation*}%
We choose $3\varepsilon <\delta $ and some $\zeta >0$ sufficiently large so
that%
\begin{eqnarray*}
\kappa &=&\int_{|\tilde{u}^{-1/2}(\tilde{u}I-\tilde{\mathbf{z}})^{-1/2}s|<\lambda
_{u}}\exp \left\{-\frac{1}{2}\left\vert s-(\tilde{u}+\zeta
_{u})^{1/2}(\tilde{u}I-\tilde{\zz}
)^{-1/2}\tilde{y}\right\vert ^{2}\right\}ds \\
&&\times E\exp \left\{(\tilde{u}+\zeta_{u})g\left((\tilde{u}+\zeta
_{u})^{-1/2}(\tilde{u}I-\tilde{\zz}
)^{-1/2}S\right)\right\} \\
&\geq & \Theta(1)\int_{|s|\leq u^{1/2+\delta }}\exp \left\{ -\zeta |s-\tilde{y}
|^{2}+O(ug(s/u))\right\} ds \\
&\geq &\Theta(1)\int_{|s-\tilde{y}|\leq 1}\exp \left\{ -\zeta |s-\tilde{y}
|^{2}+O(ug(s/u))\right\} ds \\
&\geq &\Theta(1)\int_{|s-\tilde{y}|\leq 1}\exp \left\{ -\zeta |s-\tilde{y}
|^{2}+O(u^{1+3\varepsilon -\delta })\right\} ds \\
&\geq & e^{O(u)},
\end{eqnarray*}%
where the $O(u)$ in the last step could be negative. Then,
\begin{equation}\label{below}
E_{w,\tilde{y},\tilde{z}}\left[ \kappa ^{-1}\biggr|\sup_{s\in
T,|s|>u^{-1/2+\delta }}g(s)-\frac{u}{2}s^{2}>-\lambda u^{2\delta
}\right] \leq e^{O(u)}.
\end{equation}%
Therefore, for all $0<x<u^{3/2+\varepsilon}$, we obtain that
\begin{equation*}
 e^{-\lambda u^{2}}E_{w,\tilde{y},\tilde{z}}\left[ \kappa ^{-1}\biggr|\sup_{s\in
T,|s|>u^{-1/2+\delta }}g(s)-\frac{u}{2}s^{2}>-\lambda u^{2\delta
}\right]\leq e^{-\lambda u^{2}+O(u)}\leq e^{-u^{\delta ^{\ast }}x}.
\end{equation*}
\end{proof}

\begin{proof}[Proof of Lemma \ref{LemBoundary}]

Similar to the interior case, we proceed by conditioning on $(f(\tau ),\partial
f(\tau ),\partial ^{2}f(\tau ))$ and consider

\begin{equation*}
mes(T)e^{u^{2}/2}E\left[ \left. K^{-1};\mathcal{E}_{b},\mathcal{L}%
\right\vert f(\tau )=w,\partial f(\tau )=\tilde{y},\partial ^{2}f(\tau )=%
\tilde{z}\right] .
\end{equation*}%

We perform a similar analysis as in Parts 1 and 2 in the proof of Theorem \ref{ThmG}. Therefore, we will omit most details. With a similar analysis as $I_1$ in \eqref{l4} and \eqref{ineq},  we obtain that
\begin{equation*}
\int_{T}e^{\sigma \left\{ f(t)+(u-\mu _{\sigma }(\tau ))C(t-\tau )+\mu
_{\sigma }(t)\right\} }dt>b
\end{equation*}%
if and only if
\begin{equation*}
A+\log \int_{(uI-\mathbf{z})^{-\frac{1}{2}}s+\tau \in T}\left( \frac{\sigma }{2\pi }%
\right) ^{d/2}e^{-\frac{\sigma
}{2}|s-(uI-\mathbf{z})^{-1/2}y|^{2}}ds>u^{-1}\xi _{u}
\end{equation*}%
where $A$ and $\xi _{u}$ are as defined in (\ref{A}) and (\ref{xi}). Note
that if $\tau $ is in the interior, the second term on the left-hand-side of
the above inequality is $o(u^{-1})$; if $\tau $ is close to the boundary,
this term may take a negative value. This is because part of the important integration region lies outside of set $T$.

Now we consider the $K$ term. According to
the derivations in Part 2, we obtain that%
\begin{eqnarray*}
K &=&(1+o(1))u^{-d}e^{u^{2}-u\mu _{\sigma }(\tau )+\frac{1}{2}\mu^{2}_{\sigma
}(\tau )} \\
&&\times \exp \biggr\{\frac{\tilde{u}}{\sigma }A-\tilde{u}B-\frac{1}{2\sigma }%
Tr(\Delta f(\tau )+\mu _{\sigma }(\tau )I+\Delta \mu _{\sigma }(\tau ))-%
\tilde{y}^{\top }\partial \mu _{\sigma }(\tau )-\frac{1}{2}|\partial \mu
_{\sigma }(\tau )|^{2} \\
&&+u^{-1/2+\delta}O\left(|y|^{2}+|z|^{2}+f(\tau )\right)\biggr\} \\
&&\times \int_{\tilde{u}^{-1/2}(\tilde{u}I-\tilde{\mathbf{z}})^{-1/2}s+\tau \in T}\exp \left\{-%
\frac{1}{2}\Big\vert s-(\tilde{u}+\zeta _{u})^{1/2}(\tilde{u}I-\tilde{\zz}%
)^{-1/2}\tilde{y}\Big\vert ^{2}\right\}ds \\
&&\times E\exp \left\{(\tilde{u}+\zeta _{u})g\left((\tilde{u}+\zeta _{u})^{-1/2}(\tilde{u}I-\tilde{\zz}%
)^{-1/2}S^{\prime }\right)\right\}.
\end{eqnarray*}%
Therefore,%
\begin{eqnarray*}
&&E_{\tau }^{Q}\left[ \frac{dP}{dQ};\int_{T}e^{\mu (t)+\sigma f(t)}dt>b,%
\mathcal{L}_{Q}\right]  \\
&=&O(1)\int e^{u^{2}/2}E\left[ \left. K^{-1};\mathcal{E}_{b},\mathcal{L}%
\right\vert f(\tau )=w,\partial f(\tau )=\tilde{y},\partial ^{2}f(\tau )=%
\tilde{z}\right] h(w,\tilde{y},\tilde{z})dwd\tilde{y}d\tilde{z}.
\end{eqnarray*}%
With similar derivations as in Part 3 and letting $E_{w,\tilde y, \tilde z} (\cdot ) = E(\cdot |w,\tilde y, \tilde z) $, we obtain that
\begin{eqnarray}
&&E_{\tau }^{Q}\left[ \frac{dP}{dQ};\int_{T}e^{\mu (t)+\sigma f(t)}dt>b,%
\mathcal{L}_{Q}\right]   \label{Integrand} \\
&=&O(1)u^{d}e^{-\frac{1}{2}(u-\mu _{\sigma }(\tau ))^{2}}  \notag \\
&&\int_{\mathcal{L}}\left( \int_{\tilde{u}^{-1/2}(\tilde{u}I-\tilde{\mathbf{z%
}})^{-1/2}s+\tau \in T}\exp\left\{-\frac{1}{2}\left\vert s-(\tilde{u}+\zeta
_{u})^{1/2}(\tilde{u}I-\tilde{\zz})^{-1/2}\tilde{y}\right\vert
^{2}\right\}ds\right) ^{-1}  \notag \\
&&\times E_{w,\tilde{y},\tilde{z}}\biggr[\frac{1}{E\exp \left\{ (\tilde{u}%
+\zeta _{u})g\left( (\tilde{u}+\zeta _{u})^{-1/2}(\tilde{u}I-\tilde{\zz}%
)^{-1/2}S^{\prime }\right) \right\} };  \notag \\
&&~~~~~~~~~~~A+\log \int_{(uI-\mathbf{z})^{-\frac{1}{2}}s+\tau \in T}\left(
\frac{\sigma }{2\pi }\right) ^{d/2}e^{-\frac{\sigma }{2}|s-(uI-\mathbf{z}%
)^{-1/2}y|^{2}}ds>u^{-1}\xi _{u}\biggr]  \notag \\
&&\times \exp \left\{ -\frac{\tilde{u}}{\sigma }A+u^{-1/2+\delta }O(|\tilde{y%
}|^{2}+|\tilde{z}|^{2}+|w|)\right\}   \notag \\
&&\times \exp \biggr\{-\frac{1}{2}\biggr[|\tilde{y}-\partial \mu _{\sigma
}(\tau )|^{2}+\frac{(A/\sigma +O(u^{-1/2+\varepsilon }|\tilde{y}|)+o(1)-\mu
_{20}\mu _{22}^{-1}\tilde{z})^{2}}{1-\mu _{20}\mu _{22}^{-1}\mu _{02}}
\notag \\
&&~~~~~~~~+\left\vert \mu _{22}^{-1/2}\tilde{z}-\mu _{22}^{1/2}\frac{\mathbf{%
1}}{2\sigma }\right\vert ^{2}\biggr]\biggr\}dwd\tilde{y}d\tilde{z}  \notag
\end{eqnarray}
There are two terms whose analyses are different from those of the interior case:
\begin{equation}\label{K}
\int_{\tilde{u}^{-1/2}(\tilde{u}I-\tilde{\mathbf{z}})^{-1/2}s+\tau \in T}\exp \left\{-\frac{1}{2}%
\left\vert s-(\tilde{u}+\zeta _{u})^{1/2}(\tilde{u}I-\tilde{\zz})^{-1/2}\tilde{%
y}\right\vert ^{2}\right\}ds
\end{equation}%
and%
\begin{equation}\label{EE}
\int_{(uI-\mathbf{z})^{-\frac{1}{2}}s+\tau \in T}\left( \frac{\sigma }{2\pi }\right)
^{d/2}e^{-\frac{\sigma }{2}|s-(uI-\mathbf{z})^{-1/2}y|^{2}}ds.
\end{equation}
This is mainly because the main integration region may lie beyond the region $T$.
We bound the above two integrals in the following two cases.

\begin{description}
\item[Case 1.] We consider the set $R_{1}=\{\tilde{y}:\tau +(\tilde u I-\tilde{\zz} )^{-1}\tilde{y}%
\in T\}$.

\item[Case 2.] We consider the set $R_{2}=\{\tilde{y}:\tau +(\tilde u I-\tilde{\zz} )^{-1}\tilde{y}%
\notin T\}$.
\end{description}

For the first case, $\tau +(\tilde u I- \tilde {\zz})^{-1}\tilde{y} \in T$. Note that the
integrands of \eqref{K} and \eqref{EE}
are approximately Gaussian densities. Since $\tau +(\tilde u I-\tilde {\zz})^{-1}\tilde{y}\in T$, there exists $\delta _{0}$ so that
\begin{equation*}
\int_{\tilde{u}^{-1/2}(\tilde{u}I-\tilde{\mathbf{z}})^{-1/2}s+\tau \in T}\exp \left\{-\frac{1}{2}%
\left\vert s-(\tilde{u}+\zeta _{u})^{1/2}(\tilde{u}I-\tilde{\zz})^{-1/2}\tilde{%
y}\right\vert ^{2}\right\}ds\geq \delta _{0}.
\end{equation*}%
In addition, $\eqref{EE}\leq 1$.
Therefore,%
\begin{eqnarray*}
&&e^{u^{2}/2}E\left[ K^{-1};\mathcal{E}_{b},\mathcal{L,}R_{1}\right]  \\
&\leq &O(1)u^{d}e^{-\frac{1}{2}(u-\partial \mu _{\sigma }(\tau
))^{2}} \int_{\mathcal{L}}E_{w,\tilde{y},\tilde{z}}\left[
\frac{1}{Ee^{(\tilde{
u}+\zeta _{u})g((\tilde{%
u}+\zeta _{u})^{-1/2}(\tilde{u}I-\tilde{\mathbf{z}})^{-1/2}S^{\prime
})}};A>u^{-1}\xi
_{u}\right]  \\
&&\times \exp \left\{ -\frac{\tilde{u}}{\sigma }A+u^{-1/2+\delta }O(|%
\tilde{y}|^{2}+|\tilde{z}|^{2}+|w|)\right\}  \\
&&\times \exp \biggr\{ -\frac{1}{2}\biggr[ |\tilde{y}-\partial \mu
_{\sigma }(\tau )|^{2}+\frac{(A/\sigma +O(u^{-1/2+\varepsilon
}|\tilde{y}|)+o(1)-\mu _{20}\mu _{22}^{-1}\tilde{z})^{2}}{1-\mu
_{20}\mu
_{22}^{-1}\mu _{02}}\notag\\
&&~~~~~~~~~+\left|\mu
_{22}^{-1/2}\tilde{z}-\mu _{22}^{1/2}\frac{\mathbf{1}}{2\sigma} \right|^{2}\biggr] \biggr\} dwd\tilde{y}d\tilde{z} \\
&=&O(1)u^{d-1}e^{-\frac{1}{2}(u- \mu _{\sigma }(\tau ))^{2}}.
\end{eqnarray*}%
The last step uses similar arguments as in those in Part 3, such as dominated convergence theorem. Thus, we conclude Case 1.

For the second case $\tau +(\tilde u I-\tilde{\zz} )^{-1}\tilde{y}\notin T$, since $\tau \in T$ and $T$ has piecewise smooth boundary, the integral in \eqref{K} has lower
bound
\begin{eqnarray*}
&&\int_{\tilde{u}^{-1/2}(\tilde{u}I-\tilde{\mathbf{z}})^{-1/2}s+\tau \in T}\exp \left\{-\frac{1}{2}%
\left\vert s-(\tilde{u}+\zeta _{u})^{1/2}(\tilde{u}I-\tilde{\zz})^{-1/2}\tilde{%
y}\right\vert ^{2}\right\}ds \\
&=&\Theta (1)\int_{t+\tau \in T}u^{d}\exp \left\{ -\frac{\tilde{u}%
^{2}(1+o(u^{-1/2+\varepsilon }))}{2}\left\vert t-\tilde{u}^{-1}\tilde{y}%
+O(u^{-3/2+\varepsilon })\right\vert ^{2}\right\} dt \\
&=&\Theta (1)P\left( Z>(1+O(u^{-1/2+\varepsilon}))|\tilde{y}|\right)  \\
&=&\min \left\{ \Theta (1)\frac{1}{|\tilde{y}|}e^{-\frac{1+O(u^{-1/2+\varepsilon})}{2}|\tilde{y}%
|^{2}},1\right\} .
\end{eqnarray*}%
We insert this result back to (\ref{Integrand}) and obtain that%
\begin{eqnarray*}
&&e^{u^{2}/2}E\left[ K^{-1};\mathcal{E}_{b},\mathcal{L,}R_{2}\right]  \\
&\leq &O(1)u^{d}e^{-\frac{1}{2}(u-\mu _{\sigma }(\tau ))^{2}}\int_{\mathcal{%
L,}R_{2}}|\tilde{y}|\exp \{\frac{1}{2}|\tilde{y}|^{2}\} \\
&&\times E_{w,\tilde{y},\tilde{z}}\biggr[\frac{1}{Ee^{(\tilde{u}+\zeta
_{u})g((\tilde{u}+\zeta _{u})^{-1/2}(\tilde{u}I-\tilde{\mathbf{z}}%
)^{-1/2}S^{\prime })}}; \\
&&~~~~A+\log \int_{(uI-\mathbf{z})^{-\frac{1}{2}}s+\tau \in T}\left( \frac{%
\sigma }{2\pi }\right) ^{d/2}e^{-\frac{\sigma }{2}|s-(uI-\mathbf{z}%
)^{-1/2}y|^{2}}ds>u^{-1}\xi _{u}\biggr] \\
&&\times \exp \left\{ -\frac{\tilde{u}}{\sigma }A+u^{-1/2+\delta }O(|\tilde{y%
}|^{2}+|\tilde{z}|^{2}+|w|)\right\}  \\
&&\times \exp \biggr\{-\frac{1}{2}\biggr[|\tilde{y}-\partial \mu _{\sigma
}(\tau )|^{2}+\frac{(A/\sigma +O(u^{-1/2+\varepsilon }|\tilde{y}|)+o(1)-\mu
_{20}\mu _{22}^{-1}\tilde{z})^{2}}{1-\mu _{20}\mu _{22}^{-1}\mu _{02}} \\
&&~~~~~~~~~~~+\left\vert \mu _{22}^{-1/2}\tilde{z}-\mu _{22}^{1/2}\frac{%
\mathbf{1}}{2\sigma }\right\vert ^{2}\biggr]\biggr\}dwd\tilde{y}d\tilde{z}.
\end{eqnarray*}
Using $|\tilde y| ^2$ to cancel the square term of $|\tilde y - \partial \mu_\sigma(\tau)|^2$ and the fact that $|\tilde y|\leq u^{1/2+\varepsilon}$, we obtain that
\begin{eqnarray*}
&&O(1)\tilde u ^de^{-\frac{1}{2}(u-\mu _{\sigma }(\tau ))^{2}}\\
&&\int_{\mathcal{L,}R_{2}}E_{w,\tilde{y},\tilde{z}}\biggr[
\frac{1}{Ee^{(\tilde{ u}+\zeta _{u})g((
\tilde{u}+\zeta _{u})^{-1/2}(\tilde{u}I-\tilde{\mathbf{z}})^{-1/2}S^{\prime })}}%
;\notag\\
&&~~~~A+\log \int_{(uI-\mathbf{z})^{-\frac{1}{2}}s+\tau \in T}\left( \frac{\sigma }{2\pi }%
\right) ^{d/2}e^{-\frac{\sigma }{2}|s-(uI-\mathbf{z})^{-1/2}y|^{2}}ds>u^{-1}\xi
_{u}\biggr] \\
&&\times \exp \left\{ -\frac{\tilde{u}}{\sigma }A+O(u^{1/2+2\delta
})\right\}  \\
&&\times \exp \biggr\{ -\frac{1}{2}\biggr[ \frac{(A/\sigma
+O(u^{-1/2+\varepsilon }|\tilde{y}|)+o(1)-\mu _{20}\mu _{22}^{-1}\tilde{z})^{2}}{%
1-\mu _{20}\mu _{22}^{-1}\mu _{02}}+\left|\mu
_{22}^{-1/2}\tilde{z}-\mu
_{22}^{1/2}\frac{\mathbf{1}}{2\sigma}\right|^{2}\biggr] \biggr\}
dwd\tilde{y}d\tilde{z}.
\end{eqnarray*}%
We now proceed to handle the term in \eqref{EE}. Once again, since $\tau + (\tilde u I-\tilde{\zz} )^{-1}\tilde{y}\notin T$, there exists a $c_{0}>0$ such that
\begin{equation*}
\log \int_{(uI-\mathbf{z})^{-\frac{1}{2}}s+\tau \in T}\left( \frac{\sigma }{2\pi }%
\right) ^{d/2}e^{-\frac{\sigma }{2}|s-(uI-\mathbf{z})^{-1/2}y|^{2}}ds\leq -c_{0}.
\end{equation*}%
Therefore,%
\begin{eqnarray*}
&&E_{\tau }^{Q}\left[ \frac{dP}{dQ};\int_{T}e^{\mu (t)+\sigma f(t)}dt>b,%
\mathcal{L}_{Q},R_{2}\right]  \\
&\leq &O(1)u^d e^{-\frac{1}{2}(u- \mu _{\sigma }(\tau ))^{2}} \\
&&\int_{\mathcal{L,}R_{2}}E_{w,\tilde{y},\tilde{z}}\left[ \frac{1}{Ee^{(%
\tilde{u}+\zeta _{u})g((\tilde{u}+\zeta _{u})^{-1/2}(\tilde{u}I-\tilde{\mathbf{z}})^{-1/2}S^{\prime })}}%
;A>c_{0}+u^{-1}\xi _{u}\right]  \\
&&\times \exp \left\{ -\frac{\tilde{u}}{\sigma }A+O(u^{1/2+2\delta
})\right\}  \\
&&\exp \left\{ -\frac{1}{2}\left[ \frac{(A/\sigma +O(u^{-1/2+\varepsilon }|%
\tilde{y}|)+o(1)-\mu _{20}\mu _{22}^{-1}\tilde{z})^{2}}{1-\mu
_{20}\mu _{22}^{-1}\mu _{02}}+\left|\mu _{22}^{-1/2}\tilde{z}-\mu
_{22}^{1/2}\frac{\mathbf{1}}{2\sigma} \right|^{2}\right] \right\}
dwd\tilde{y}d\tilde{z}.
\end{eqnarray*}%
The integral in the above display can be split into two parts, $A > c_0/2$ and $A< c_0 /2$: first
\begin{eqnarray*}
&&\int_{\mathcal{L,}R_{2},A>c_{0}/2}E_{w,\tilde{y},\tilde{z}}\left[ \frac{1%
}{Ee^{( \tilde{u}+\zeta _{u})g((\tilde{u}+\zeta
_{u})^{-1/2}(\tilde{u}I-\tilde{\mathbf{z}})^{-1/2}S^{\prime
})}};A>c_{0}+u^{-1}\xi _{u}\right]  \\
&&\times \exp \left\{ -\frac{\tilde{u}}{\sigma }A+O(u^{1/2+2\delta
})\right\}  \\
&&\exp \left\{ -\frac{1}{2}\left[ \frac{(A/\sigma +O(u^{-1/2+\varepsilon }|%
\tilde{y}|)+o(1)-\mu _{20}\mu _{22}^{-1}\tilde{z})^{2}}{1-\mu
_{20}\mu _{22}^{-1}\mu _{02}}+\left|\mu _{22}^{-1/2}\tilde{z}-\mu
_{22}^{1/2}\frac{\mathbf{1}}{2\sigma}
\right|^{2}\right] \right\} dwd\tilde{y}d\tilde{z} \\
&=&O(1)\int_{\mathcal{L,}R_{2},A>c_{0}/2}\exp \left\{ -\frac{c_{0}\tilde{u}}{2\sigma }-\frac{\tilde u}{\sigma}(A-c_0/2)+O(u^{1/2+2\delta })\right\}  \\
&&\exp \left\{ -\frac{1}{2}\left[ \frac{(A/\sigma +O(u^{2\varepsilon })+o(1)-\mu _{20}\mu _{22}^{-1}\tilde{z})^{2}}{1-\mu
_{20}\mu _{22}^{-1}\mu _{02}}+\left|\mu _{22}^{-1/2}\tilde{z}-\mu
_{22}^{1/2}\frac{\mathbf{1}}{2\sigma}\right|^{2}\right] \right\} dwd\tilde{y}d\tilde{z} \\
&=&O(1)\exp\left\{-\frac{c_{0}\tilde{u}}{4\sigma }\right\},
\end{eqnarray*}%
and second (with a similar derivation as in the proof of Lemma \ref{LemGamma})%
\begin{eqnarray*}
&&\int_{\mathcal{L,}R_{2},A\leq
c_{0}/2}E_{w,\tilde{y},\tilde{z}}\left[ \frac{1}{Ee^{(
\tilde{u}+\zeta _{u})g((\tilde{u}+\zeta _{u})^{-1/2}(\tilde{u}I-\tilde{\mathbf{z}}%
)^{-1/2}S^{\prime })}};A>c_{0}+u^{-1}\xi _{u}\right]  \\
&&\times \exp \left\{ -\frac{\tilde{u}}{\sigma }A+O(u^{1/2+2\delta
})\right\}  \\
&&\exp \left\{ -\frac{1}{2}\left[ \frac{(A/\sigma +O(u^{-1/2+\varepsilon }|%
\tilde{y}|)+o(1)-\mu _{20}\mu _{22}^{-1}\tilde{z})^{2}}{1-\mu
_{20}\mu _{22}^{-1}\mu _{02}}+\left|\mu _{22}^{-1/2}\tilde{z}-\mu
_{22}^{1/2}\frac{\mathbf{1}}{2\sigma}\right|^{2}\right] \right\} dwd\tilde{y}d\tilde{z} \\
&\leq &\int_{\mathcal{L,}R_{2},A\leq c_{0}/2}\exp \left\{ -\lambda
u^{2}\right\}  \exp \left\{ -\frac{\tilde{u}}{\sigma
}A+O(u^{1/2+2\delta
})\right\}  \\
&&\exp \left\{ -\frac{1}{2}\left[ \frac{(A/\sigma +O(u^{-1/2+\varepsilon }|%
\tilde{y}|)+o(1)-\mu _{20}\mu _{22}^{-1}\tilde{z})^{2}}{1-\mu
_{20}\mu _{22}^{-1}\mu _{02}}+\left|\mu _{22}^{-1/2}\tilde{z}-\mu
_{22}^{1/2}\frac{\mathbf{1}}{2\sigma}
\right|^{2}\right] \right\} dwd\tilde{y}d\tilde{z} \\
&=&o(u^{-1}).
\end{eqnarray*}%
The last inequality results from the fact that
\begin{equation*}
E_{w,\tilde{y},\tilde{z}}\left[ \frac{1}{Ee^{(\tilde{u}+\zeta
_{u})g((\tilde{u}+\zeta
_{u})^{-1/2}(\tilde{u}I-\tilde{\mathbf{z}})^{-1/2}S^{\prime })}};~-u^{-1}\xi _{u}>\frac{%
c_{0}}{2}\right] = O(1)\exp \left\{ -\lambda u^{2}\right\},
\end{equation*}
 which can be derived
by the bounds of $\xi _{u}$ straightforwardly. Thus, when $\tau $ is within $%
u^{-1/2+\delta ^{\prime }}$ distance away from the boundary of $T$, we
obtain that%
\begin{eqnarray}
&&E_{\tau }^{Q}\left[ \frac{dP}{dQ};\int_{T}e^{\mu (t)+\sigma f(t)}dt>b,%
\mathcal{L}_{Q}\right]   \notag \\
&=&E_{\tau }^{Q}\left[ \frac{dP}{dQ};\int_{T}e^{\mu (t)+\sigma f(t)}dt>b,%
\mathcal{L}_{Q},R_{1}\right] +E_{\tau }^{Q}\left[ \frac{dP}{dQ}%
;\int_{T}e^{\mu (t)+\sigma f(t)}dt>b,\mathcal{L}_{Q},R_{2}\right]
\label{asy1} \notag\\
&=&O(1)u^{d-1}e^{-\frac{1}{2}(u-\mu _{\sigma }(\tau ))^{2}}.
\notag
\end{eqnarray}

\end{proof}

\centerline{\large{\bf B: Simulation}}

\begin{figure}[h]
   \begin{center}
   \includegraphics[width=14cm]{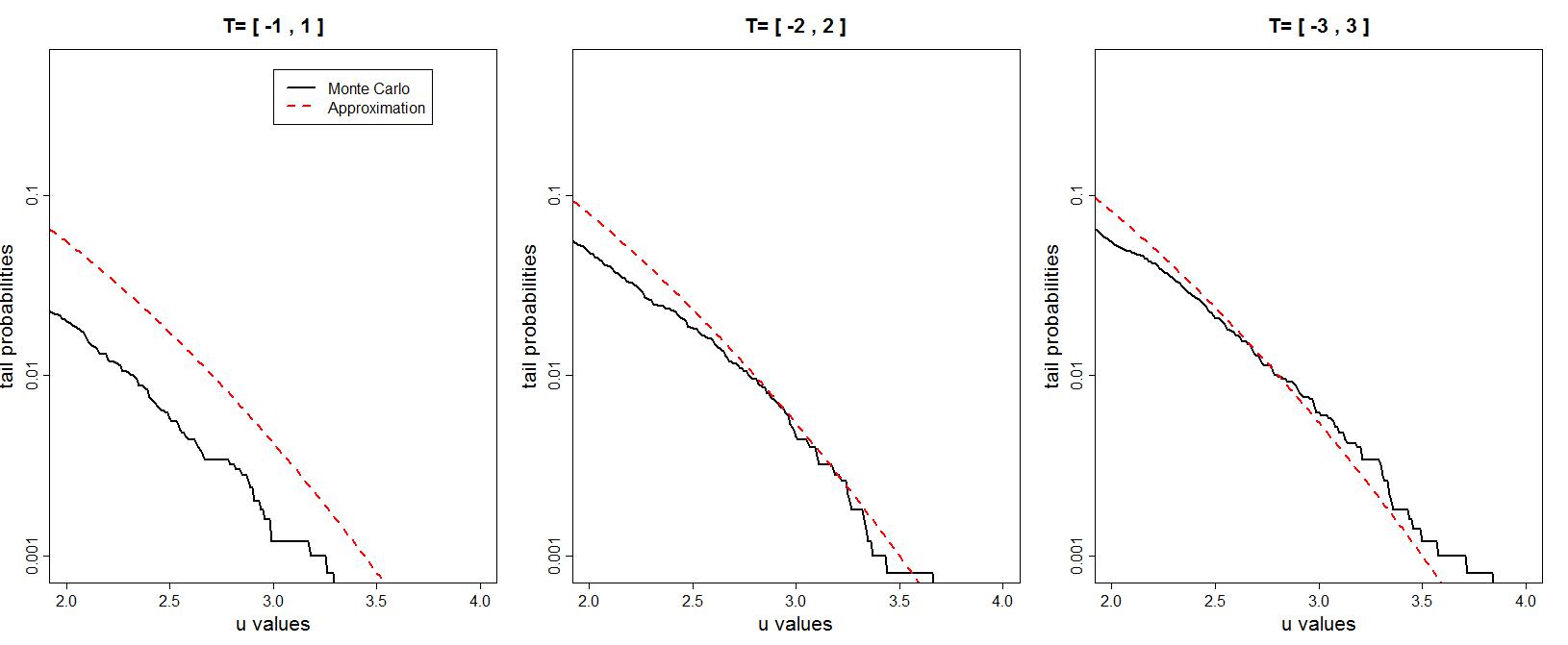}
      \caption{Comparisons of approximations in Theorem \ref{ThmG} and the estimated tail probabilities via 5000 simulations on a log scale for the one-dimensional case.}\label{Fig1}
   \end{center}
   \end{figure}

\begin{figure}[h]
   \begin{center}
   \includegraphics[width=14cm]{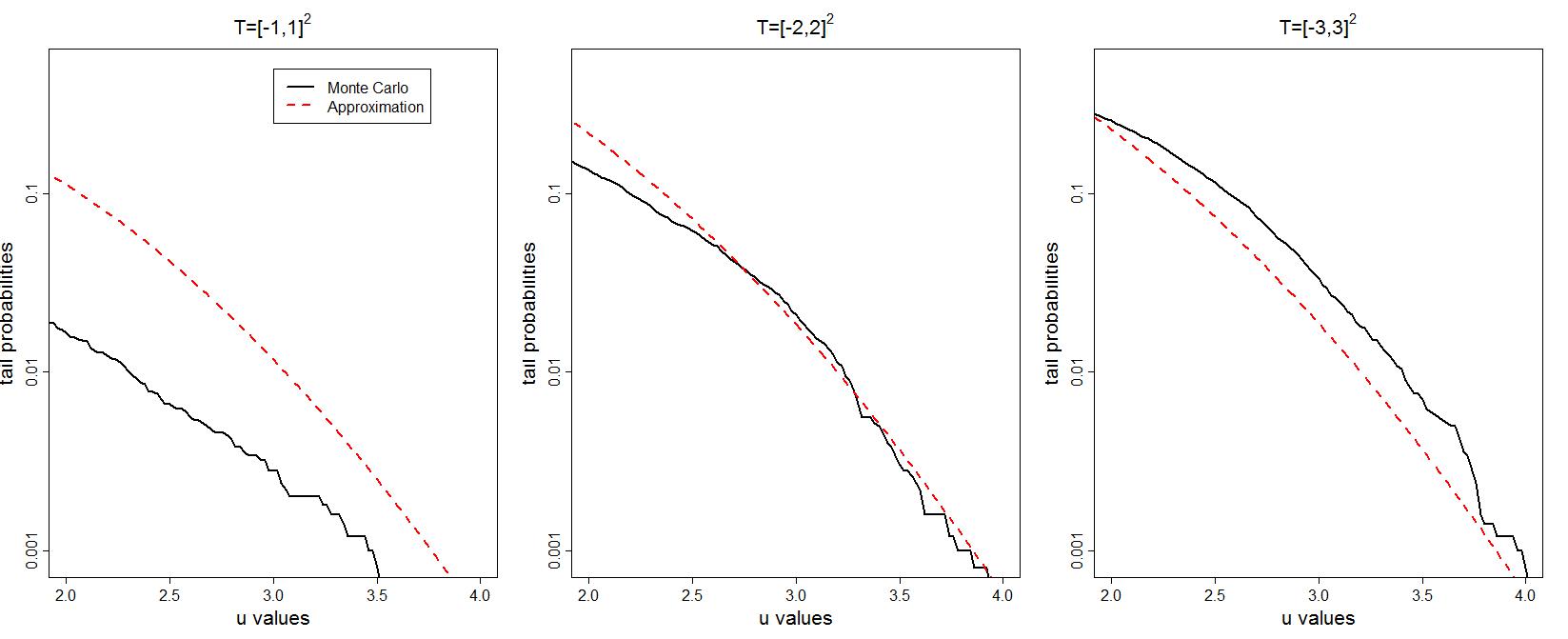}
      \caption{Comparisons of approximations in Theorem \ref{ThmG} and the estimated tail probabilities via 5000 simulations on a log scale for the two-dimensional case.}\label{Fig2}
   \end{center}
   \end{figure}
We conduct a simulation study to exam the accuracy of the approximations. In particular, we consider Gaussian processes living on a domain $T=[-A,A]^d$ with covariance matrix $C(t) = e^{-\frac{|t|^2}{2}}$ and mean function $\mu(t) = - |t|^2/4$.
Figure \ref{Fig1} compares,  on a log scale, the approximations in Theorem \ref{ThmG} with the tail probabilities computed using 5000 simulations for the one-dimensional case, that is, $T=[-A,A]$; Figure \ref{Fig2} is for the two-dimensional case, that is, $T=[-A,A]^2$. The red dash line indicates our approximations and the black solid line indicates the Monte Carlo estimates of the probabilities
$$P\left(\int_T e^{\mu(t)+ f(t)}dt > b\right).$$

One empirical finding is that the approximation sometimes overestimates the tail probability especially when the region $T$ is relatively small (compared with $u^{-1/2}$). This is because the asymptotic approximation is derived based on the heuristics in \eqref{app} so that $\int_T e^{f(t)} dt>b$ is approximately equivalent to $\int_{R^d} e^{uC(t)} dt > b$. However, for not so large a $u$ (or equivalently $b$), for instance $u\leq 4$, the intervals $[-1,1]$ and $[-1,1]^2$ (the first plots of both figures) are not large enough (compared with $u^{-1/2}= \frac 1 2$). Therefore, the approximation in \eqref{app} is not accurate and $\int_T e^{f(t)} dt <\int_{R^d} e^{uC(t)} dt$. This results in an overestimation of the tail. When the region $T$ is larger, such as the second and the third plots in both figures, the approximations are accurate. In many spatial analyses, the region $T$ of interest is reasonably large.
Therefore, we believe that the approximations derived in this paper provide reasonable estimates of the tail probabilities for a wide range of practical analyses.

A practical guide to evaluate the appropriateness of the approximation is as follows. Let $B(t,r) = \{s: |s-t|\leq r\}$ be the $r$-ball around $t$. Define $$r(T) = \sup\{r: \mbox{there exists a $t$ such that } B(t,r)\subset T\}.$$
That is, $r(T)$ is the radius of the largest ball  contained in $T$. Further, define
$$\rho(T)= \sup_{|t|=r(T)} C(t)/C(0).$$
The smaller the $\rho(T)$ is, the larger the region $T$ is (relative to the covariance function).
For instance, for $T=[-1,1]$ and $T=[-1,1]^2$ (the first two plots in both figures), $\rho(T)= e^{-1/2} = 0.61$; for the other regions, the $\rho(T)$'s are all below $0.15$. Therefore, the simulation study supports a practical guide that the approximations developed in the current paper provide good estimates when $\rho(T)< 0.15$.

\end{document}